\def\Cline#1#2{\@Cline#1#2\@nil}
\def\@Cline#1-#2#3\@nil{%
	\omit
	\@multicnt#1%
	\advance\@multispan\m@ne
	\ifnum\@multicnt=\@ne\@firstofone{&\omit}\fi
	\@multicnt#2%
	\advance\@multicnt-#1%
	\advance\@multispan\@ne
	\leaders\hrule\@height#3\hfill
	\cr}
\numberwithin{equation}{section}
\let\cal\mathcal
\def\Cscr{{\cal C}}
\def\Kscr{{\cal K}}
\def\Pscr{{\cal P}}
\def\Tscr{{\cal T}}
\let\blb\mathbb
\def \ZZ{{\blb Z}}
\def\rad{\operatorname {rad}}
\def\Ext{\operatorname {Ext}}
\def\End{\operatorname {End}}
\def\im{\operatorname {im}}
\def\Ker{\operatorname {ker}}
\def\End{\operatorname {End}}
\DeclareMathOperator{\Aut}{Aut}
\DeclareMathOperator{\soc}{soc}
\theoremstyle{definition}
\newtheorem{lemma}{Lemma}[section]
\newtheorem{proposition}[lemma]{Proposition}
\newtheorem{theorem}[lemma]{Theorem}
\newtheorem{corollary}[lemma]{Corollary}
\newtheorem{example}[lemma]{Example}
\newtheorem{definition}[lemma]{Definition}
\newtheorem{remark}[lemma]{Remark}
\DeclareMathOperator\Hom{Hom}
\newcommand{\modC}{\mathbf{mod}}
\newcommand{\projC}{\mathbf{proj}}
\newcommand{\Z}{\mathbb Z}
\mathchardef\mhyphen="2D
\newcounter{todocounter}
\DeclareDocumentCommand\addreference{g}{\stepcounter{todocounter}\todo[color = blue!30, fancyline]{\thetodocounter. Add reference\IfNoValueF{#1}{: #1}}\xspace}
\DeclareDocumentCommand\checkthis{g}{\stepcounter{todocounter}\todo[color = red!50, fancyline]{\thetodocounter. Check this\IfNoValueF{#1}{: #1}}\xspace}
\DeclareDocumentCommand\fixthis{g}{\stepcounter{todocounter}\todo[color = orange!50, fancyline]{\thetodocounter. Fix this\IfNoValueF{#1}{: #1}}\xspace}
\DeclareDocumentCommand\expand{g}{\stepcounter{todocounter}\todo[color = green!50, fancyline]{\thetodocounter. Expand\IfNoValueF{#1}{: #1}}\xspace}
\title[]{A reduction theorem for $\tau$-rigid modules}
\author{Florian Eisele}
\author{Geoffrey Janssens}
\author{Theo Raedschelders}
\address{(Florian Eisele) \newline Department of Mathematics, City University London,
Northampton Square, London EC1V 0HB, United Kingdom \newline E-mail address: {\tt Florian.Eisele@city.ac.uk}}
\address{(Geoffrey Janssens) \newline Departement Wiskunde, Vrije Universiteit Brussel,
Pleinlaan $2$, B-1050 Elsene, Belgium \newline E-mail address: {\tt geofjans@vub.ac.be}}
\address{(Theo Raedschelders) \newline Departement Wiskunde, Vrije Universiteit Brussel, 
Pleinlaan $2$, B-1050 Elsene, Belgium \newline E-mail address: {\tt traedsch@vub.ac.be}}
\begin{document}

\begin{abstract}
We prove a theorem which gives a bijection between the support $\tau$-tilting modules over a given finite-dimensional algebra $A$ and the support $\tau$-tilting modules over $A/I$, where $I$ is the ideal generated by the intersection of the center of $A$ and the radical of $A$. This bijection is both explicit and well-behaved.
We give various corollaries of this, with a particular focus on blocks of group rings of finite groups. 
In particular we show that there are $\tau$-tilting finite wild blocks with more than one simple module.  
We then go on to classify all support $\tau$-tilting modules for all algebras of dihedral, semidihedral and quaternion type, as defined by Erdmann, which include all tame blocks of group rings. Note that since these algebras are symmetric, this is the same as classifying all basic two-term tilting complexes, and it turns out that a tame block has at most $32$ different basic two-term tilting complexes. We do this by using the aforementioned reduction theorem, which reduces the problem to ten different algebras only depending on the ground field $k$, all of which happen to be string algebras. To deal with these ten algebras we give a combinatorial classification of all $\tau$-rigid modules over (not necessarily symmetric) string algebras.
\end{abstract}

\maketitle

\setcounter{tocdepth}{1}
{
\tableofcontents
}

\newcommand\blfootnote[1]{%
  \begingroup
  \renewcommand\thefootnote{}\footnote{#1}%
  \addtocounter{footnote}{-1}%
  \endgroup
}

\blfootnote{\textit{2010 Mathematics Subject Classification}. Primary 16G10.}
\blfootnote{\textit{Key words and phrases}. Representation theory of Artin algebras, $\tau$-rigid modules, string algebras, blocks of group algebras.}
\blfootnote{The first author is supported by the EPSRC, grant EP/M02525X/1, and was, at the beginning of the project presented in this article, supported by the FWO,  project G.0157.12N.} 
\blfootnote{The second and third authors are aspirants at the FWO.}

\section{Introduction}

The theory of support $\tau$-tilting modules, as introduced by Adachi, Iyama and Reiten in~\cite{MR3187626}, is related to, and to some extent generalizes, several classical concepts in the representation theory of finite dimensional algebras.  

On the one hand, it is related to silting theory for triangulated categories, which was introduced by Keller and Vossieck in~\cite{MR976638} and provides a generalization of tilting  theory. Just like tilting objects, silting objects generate the triangulated category they live in, but in contrast to tilting objects they are  allowed to have negative self-extensions. Using Keller's version~\cite{MR1649844} of Rickard's derived Morita theorem, a silting object $S$ in an algebraic triangulated category $\Tscr$ gives rise to an equivalence between $\Tscr$ and the perfect complexes over the derived endomorphism ring $\mathbf{R}\End_{\Tscr}(S)$. This ring is a non-negatively graded DGA, which can however be very hard to present in a reasonable way (see for example~\cite{oppermann2015quivers}). 

On the other hand, $\tau$-tilting theory is related to mutation theory, which has its origins in the Bernstein-Gelfand-Ponomarev reflection functors. The basic idea is to replace an indecomposable summand of a tilting object by a new summand to obtain a new tilting object. This mutation procedure has played an important role in several results concerning Brou\'e's abelian defect group conjecture, see~\cite{MR1880662,okuyama1997some,MR1027750}. However, it is not always possible to replace a summand of a tilting object and get a new tilting object in return, which may be seen as sign that one needs to consider a larger class of objects.  
This is why Aihara and Iyama introduced the concept of silting mutation~\cite{MR2927802}, where one observes quite the opposite behavior: any summand of a silting object can be replaced to get (infinitely) many new silting objects, and among all of those possibilities one is distinguished as the ``right mutation'' and another one as the ``left mutation''. So in this setting it is natural to ask whether the action of iterated silting mutation on the set of basic silting objects in $\Kscr^b(\projC_A)$ is transitive (for an explicit reference, see Question 1.1 in~\cite{MR2927802}).
In general this question is hard, but to make it more manageable, one can start by studying not all of the basic silting complexes, but just the two-term ones. These have the benefit of being amenable to  the theory of support $\tau$-tilting modules mentioned above.

 A support $\tau$-tilting module $M$ is a module which satisfies $\Hom_{A}(M,\tau M)=0$ and which has as many non-isomorphic indecomposable summands as it has non-isomorphic simple composition factors. These modules correspond bijectively to two-term silting complexes, and possess a compatible mutation theory as well. Using $\tau$-tilting theory, the computation of all the two-term silting complexes and their mutations is a lot more manageable, and in nice cases, one can deduce from the finiteness of the number of two-term silting complexes, the transitivity of iterated silting mutation.

In this article, we will be concerned with determining all basic two-term silting complexes (or equivalently support $\tau$-tilting modules) for various finite dimensional algebras $A$ defined over an algebraically closed field. To this end, we prove the following very general reduction theorem:
	\begin{theorem}[see Theorem~\ref{maintheorem}]
	\label{intromain}
	For an ideal $I$ which is generated by central elements and contained in the Jacobson radical of $A$, the $g$-vectors of indecomposable $\tau$-rigid (respectively support $\tau$-tilting) modules over $A$ coincide with the  ones for  $A/I$, as do the mutation quivers.  
	\end{theorem}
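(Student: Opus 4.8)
The plan is to transport the statement, via the Adachi--Iyama--Reiten correspondence \cite{MR3187626}, to two-term silting complexes in $\Kscr^b(\projC_A)$: these correspond bijectively to support $\tau$-tilting pairs, the $g$-vector of $T=(T^{-1}\to T^0)$ is $[T^0]-[T^{-1}]\in K_0(\projC_A)$, and $\Hom_{\Kscr^b(A)}$-vanishing in positive degree between indecomposable summands is exactly $\tau$-rigidity. Since $I\subseteq\rad A$, the algebras $A$ and $A/I$ have the same simple modules, so there is a canonical identification $K_0(\projC_A)\cong\ZZ^n\cong K_0(\projC_{A/I})$. I would first reduce to $I^2=0$: as $I$ is nilpotent, each step of $A=A/I^m\twoheadrightarrow A/I^{m-1}\twoheadrightarrow\cdots\twoheadrightarrow A/I$ kills an ideal $I^k/I^{k+1}$ of $A/I^{k+1}$ which is square-zero, lies in the radical, and is generated by the images of the products $z_{i_1}\cdots z_{i_k}$ of the central generators of $I$ (still central). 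So assume $I^2=0$ and $I=(z_1,\dots,z_r)$ with $z_i$ central. Then I would study the triangle functor $\Phi:=(A/I)\otimes_A(-)\colon\Kscr^b(\projC_A)\to\Kscr^b(\projC_{A/I})$, $P^\bullet\mapsto P^\bullet/IP^\bullet$; it sends $A$ to $A/I$ (hence $\projC_A$ to $\projC_{A/I}$), is the identity on $K_0\cong\ZZ^n$ and therefore preserves $g$-vectors, and preserves two-term complexes.

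The technical core is the following, where \emph{both} hypotheses on $I$ enter. \textbf{Claim.} If $T,U$ are two-term complexes of projective $A$-modules and $\Phi(T\oplus U)$ is presilting over $A/I$, then $\Hom_{\Kscr^b(A)}(T,U)\to\Hom_{\Kscr^b(A/I)}(\Phi T,\Phi U)$ is surjective and $\Hom_{\Kscr^b(A)}(T,U[1])$ is a quotient of $\Hom_{\derived(A/I)}(\Phi T,IU[1])$. To prove it one uses the short exact sequence of Hom-complexes $0\to\Hom^\bullet_A(T,IU)\to\Hom^\bullet_A(T,U)\to\Hom^\bullet_{A/I}(\Phi T,\Phi U)\to0$ (right-exactness because $T$ is termwise projective): its cohomology sequence reduces everything to $\Hom_{\derived(A/I)}(\Phi T,IU[1])=0$, after identifying $\RHom_A(T,IU)\simeq\RHom_{A/I}(\Phi T,IU)$, which uses that $IU$ is an $A/I$-complex (here $I^2=0$ is needed) and $A/I\Lotimes_A T\simeq\Phi T$. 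Now $IU=I\otimes_{A/I}\Phi U$ (again $I^2=0$), and because $I$ is generated by \emph{central} elements there is a surjection of $A/I$-\emph{bimodules} $(A/I)^{\oplus r}\twoheadrightarrow I$, hence a surjection of $A/I$-complexes $(\Phi U)^{\oplus r}\twoheadrightarrow IU$ with kernel $K$ concentrated in degrees $[-1,0]$. Applying $\RHom_{A/I}(\Phi T,-)$ to the triangle $K\to(\Phi U)^{\oplus r}\to IU$ squeezes $\Hom_{\derived(A/I)}(\Phi T,IU[1])$ between $\Hom_{\Kscr^b(A/I)}\bigl(\Phi T,(\Phi U)^{\oplus r}[1]\bigr)=0$ (as $\Phi(T\oplus U)$ is presilting) and $\Hom_{\derived(A/I)}(\Phi T,K[2])=0$ (degree reasons: $\Phi T$ is a projective complex in degrees $[-1,0]$, and so is $K$). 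This is exactly the place where centrality cannot be dropped: for instance for $I=\rad(kA_2)$ one only obtains a one-sided surjection onto $I$ and the vanishing genuinely fails.

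Granting the Claim I would proceed as follows. The easy half of the cohomology sequence shows $\Phi$ preserves presilting; since $\Phi A=A/I$ it also preserves the generation condition, so $\Phi$ sends silting to silting. The Claim with $U=T$ gives a surjection $\End_{\Kscr^b(A)}(T)\twoheadrightarrow\End_{\Kscr^b(A/I)}(\Phi T)$, so $\Phi$ preserves indecomposability of two-term presilting complexes; combined with the injectivity of $g$-vectors on indecomposable $\tau$-rigid pairs \cite{MR3187626}, $\Phi$ carries a basic silting (resp.\ indecomposable presilting) complex to a basic silting (resp.\ indecomposable presilting) one with the same $g$-vector tuple (resp.\ $g$-vector), injectively. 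For surjectivity, lift a basic silting $S$ over $A/I$ termwise and lift its differential to a two-term complex $T$ over $A$ with $\Phi T=S$; the second half of the Claim, the triangle $IT\to T\to S$, and $\RHom_A(T,S)\simeq\RHom_{A/I}(S,S)$ force $\Hom_{\Kscr^b(A)}(T,T[1])=0$, so \emph{any} lift is presilting, and a Bongartz-completion argument (again using \cite{MR3187626}) turns this presilting lift of a silting complex into a basic silting complex. Passing from pairs to modules is harmless: a $\tau$-rigid pair is a module exactly when the corresponding complex has no indecomposable summand $Q[1]$ with $Q$ projective, which by \cite{MR3187626} is detected by the $g$-vector (it must avoid $-[P_1],\dots,-[P_n]$), a condition $\Phi$ respects.

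For the mutation quivers, write a left mutation of a basic silting complex $T=T_i\oplus\bar T$ as $\mu_i^-(T)=\operatorname{cone}(f)\oplus\bar T$ for a minimal left $\add(\bar T)$-approximation $f\colon T_i\to E$. Since $T_i$ and the modules of $\add(\bar T)$ all lie in $\add(T)$, the surjectivity in the Claim lets one lift maps out of $\Phi T_i$, so $\Phi f$ is a left $\add(\Phi\bar T)$-approximation; as $\Phi$ commutes with cones and the cone of a (possibly non-minimal) approximation agrees with that of a minimal one, $\Phi(\mu_i^-(T))=\mu_i^-(\Phi T)$. Thus $\Phi$ is a bijection on vertices intertwining left mutation, i.e.\ an isomorphism of mutation quivers. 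The one genuinely hard point is the vanishing in the Claim — more precisely, pinning down why it requires both $I^2=0$ and the centrality of the generators of $I$; once that is in place, the rest is an orchestration of the standard $\tau$-tilting machinery.
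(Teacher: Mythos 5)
Your proposal is correct in substance and shares the paper's overall skeleton: both work with the reduction of a two-term complex of projectives over $A$ to a two-term complex over $A/I$, reduce to a square-zero situation, transfer the vanishing of $\Hom(-,-[1])$ between $A$ and $A/I$, and then obtain the bijection (and its compatibility with $g$-vectors) from the fact that two-term rigid complexes are determined by their $g$-vectors (Theorem~\ref{g-vector}). Where you genuinely diverge is in the two key steps. For the implication ``rigid over $A/I$ $\Rightarrow$ rigid over $A$'', the paper reduces to a principal ideal $(z)$ with $z\in Z(A)$, $z^2=0$, and, using the four-term exact sequence of Proposition~\ref{exact}, lifts surjectivity of $f_{\bar\alpha,\bar\beta}$ to surjectivity of $f_{\alpha,\beta}$ by the explicit two-step computation $\gamma=f_{\alpha,\beta}(X+zX',Y+zY')$; centrality and $z^2=0$ enter exactly there. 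You instead keep all central generators, identify the obstruction as $\Hom_{\derived(A/I)}(\Phi T,IU[1])$ (using $I^2=0$ to view $IU$ as an $A/I$-complex and adjunction for the termwise projective $T$), and kill it by sandwiching it, via the $A/I$-bimodule surjection $(A/I)^{\oplus r}\twoheadrightarrow I$ supplied by centrality, between $\Hom_{\Kscr^b(\projC_{A/I})}(\Phi T,\Phi U[1])^{\oplus r}=0$ and a term that vanishes for degree reasons; this is a different mechanism for the same vanishing, heavier on machinery but conceptually transparent about where centrality and square-zero are used (and your $kA_2$ example correctly shows centrality cannot be dropped). For the mutation quivers, the paper uses the identification with the Hasse quiver of the two-term silting poset (Theorem~\ref{compare}) together with the order being preserved and reflected by \eqref{drop}, whereas you show $\Phi$ commutes with left mutation by lifting approximations through the surjectivity in your Claim; both work, yours needing the small extra observation that a non-minimal approximation only changes the cone by a summand in the relevant $\add$. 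Two compressions deserve a word: the ``Bongartz-completion'' step for surjectivity is more cleanly replaced by counting (your endomorphism-ring surjection plus $g$-vector injectivity show any lift $T$ of a basic silting $S$ is basic two-term presilting with $|A|$ nonisomorphic summands, hence silting), and note that your Claim only needs the hypothesis $\Hom_{\Kscr^b(\projC_{A/I})}(\Phi T,\Phi U[1])=0$ rather than presilting of $\Phi(T\oplus U)$, which is the form needed if one prefers to recover the poset isomorphism directly as the paper does.
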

	For the purpose of this introduction we will call an algebra obtained from $A$ by taking successive central quotients a \emph{good} quotient of $A$. The proof of the theorem is a simple application of a four-term exact sequence 
		$$
		\begin{tikzpicture}[baseline= (a).base]
		\node[scale=.8] (a) at (0,0){
		\begin{tikzcd}
		0 \to \Hom_{\Cscr^b(\projC_A)}(C(\alpha),C(\beta)) \to \Hom_A(P,R) \times \Hom_A(Q,S) \xrightarrow{f_{\alpha,\beta}} \Hom_A(P,S) \to \Hom_{\Kscr^b(\projC_A)}(C(\alpha),C(\beta)[1]) \to 0
		\end{tikzcd}};
		\end{tikzpicture}
		$$
which is constructed in Proposition~\ref{exact}, where $P, Q, R$ and $S$ are projective modules, and $C(\alpha)$ and $C(\beta)$ are two-term complexes with terms $P$ and $Q$, respectively $R$ and $S$. The power of this theorem lies in its generality. For example, as an immediate corollary, we recover a result of \cite{Adachi:2015aa} saying that the mutation quiver and the $g$-vectors of a Brauer graph algebra do not depend on the multiplicities of the exceptional vertices, without having to classify all $\tau$-tilting modules beforehand.

One can often use Theorem~\ref{intromain} to effectively compute all two-term silting complexes over a given algebra. In fact, it turns out that many algebras of interest (for example all special biserial algebras and all algebras of dihedral, semidihedral and quaternion type) have a string algebra as a good quotient. Thus, in Section~\ref{separated}, we give a combinatorial algorithm to determine the indecomposable $\tau$-rigid modules, the support $\tau$-tilting modules and the mutation quiver of a string algebra, provided it is $\tau$-tilting finite (otherwise one still gets a description, but no algorithm for obvious reasons). 

As an application, in Section~\ref{Tame blocks}, we consider blocks of group algebras. Note that because these algebras are symmetric, silting and tilting complexes coincide. We show that all tame blocks are $\tau$-tilting finite, i.e. there are only a finite number of isomorphism classes of $\tau$-tilting modules, and we give non-trivial (i.e. non-local) examples of wild blocks of (in some sense) arbitrary large defect which are $\tau$-tilting finite. 

For tame blocks, there is a list of algebras containing all possible basic algebras of these blocks, which is due to Erdmann~\cite{MR1064107}.
It turns out that all algebras of dihedral, semidihedral and quaternion type (which are the algebras that Erdmann classifies) have a string algebra as good quotient, and we exploit this to determine the $g$-vectors and Hasse quivers of all of them. In particular, we  obtain the following theorem.
	\begin{theorem}[see Theorem~\ref{independence}]
	The $g$-vectors and Hasse quivers for tame blocks of group algebras depend only on the Ext-quiver of their basic algebras.
	\end{theorem}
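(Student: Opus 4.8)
The plan is to reduce the statement, via Erdmann's classification, to the reduction theorem (Theorem~\ref{maintheorem}) applied to a fixed list of string algebras. Throughout one uses that Morita equivalence preserves $\tau$-rigid modules, support $\tau$-tilting modules, their $g$-vectors and the Hasse quiver of the support $\tau$-tilting poset, so it suffices to work with basic algebras. Since $k$ is algebraically closed, the basic algebra $B$ of a block of tame representation type is, by Erdmann~\cite{MR1064107}, an algebra of dihedral, semidihedral or quaternion type. Each such $B$ is given as $kQ_B/I_B$, where $Q_B$ is the Ext-quiver of $B$ --- one of a short explicit list of quivers, with one, two or three vertices --- and $I_B$ is an ideal depending only on a bounded number of parameters: exponents $k,l,m,\dots$ encoding multiplicities, and scalars $c,d$ lying in $\{0,1\}$ or in $k^{\times}$.

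The key step is to show that every such $B$ admits a fixed string algebra $B_0 = B_0(Q_B)$, depending only on the Ext-quiver $Q_B$, as a \emph{good} quotient, i.e. there is a chain of surjections $B \twoheadrightarrow B_1 \twoheadrightarrow \dots \twoheadrightarrow B_0$ in which each map is the quotient by an ideal generated by central elements lying in the Jacobson radical. This I would establish case by case along Erdmann's list. In each family the parameter-carrying relations are eliminated by successive central quotients: a relation of the form $\beta\gamma = c\cdot p$ with $p$ a long path becomes $\beta\gamma = 0$ after one factors out the ideal generated by $p$, and $p$ is always a power of a central radical element (typically a power of a cycle through an exceptional vertex), so the quotient is of the required form; likewise the socle relations and the multiplicity exponents disappear upon factoring out central elements of $\rad B$. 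The intermediate algebras $B_i$ need not themselves be string algebras --- this causes no difficulty, since Theorem~\ref{maintheorem} applies to each individual quotient. After finitely many steps one reaches a parameter-free algebra, which one checks directly is a string algebra --- one of the ten algebras isolated in Section~\ref{Tame blocks} --- and which depends only on $Q_B$. In the few cases where a scalar cannot be normalised to $0$, an elementary change of variables rescaling the arrows gives an isomorphic presentation with that scalar equal to $1$, which affects neither the $g$-vectors nor the Hasse quiver.

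With this in hand the theorem is immediate. Applying Theorem~\ref{maintheorem} successively to $B \twoheadrightarrow B_1 \twoheadrightarrow \dots \twoheadrightarrow B_0$, the $g$-vectors of the indecomposable $\tau$-rigid modules and of the support $\tau$-tilting modules of $B$, together with the mutation quiver of $B$, agree with those of $B_0$. By~\cite{MR3187626} the mutation quiver of support $\tau$-tilting modules coincides with the Hasse quiver of the support $\tau$-tilting poset; hence the $g$-vectors and the Hasse quiver of $B$ are those of $B_0 = B_0(Q_B)$, and therefore depend only on the Ext-quiver $Q_B$. Since any two basic algebras of tame blocks with isomorphic Ext-quivers are carried to the same $B_0$, this proves the claim.

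The main obstacle is the middle paragraph: one has to run through every family on Erdmann's list, write down the chain of central quotients explicitly, check that each step is genuinely a quotient by a central ideal inside the radical, and verify that the common terminus is the \emph{same} string algebra for all tame blocks sharing a given Ext-quiver. Once this bookkeeping is done, the reduction theorem together with the identification of the mutation quiver with the Hasse quiver supplies the rest of the proof with no further work.
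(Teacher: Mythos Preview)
Your approach is essentially the same as the paper's: reduce each basic algebra on Erdmann's list to a fixed string algebra via successive central quotients (this is exactly the content of Table~\ref{table Erdmann list} in the appendix), then invoke Theorem~\ref{maintheorem}. One ingredient you do not mention, however, is genuinely needed: Holm's refinement~\cite{MR1461486} of Erdmann's list, which pins down precisely which families actually occur as tame blocks. Without it your key claim that ``$B_0$ depends only on $Q_B$'' fails at the level of Erdmann's full list: for instance, the algebras $SD(2B)_1$ and $SD(2B)_4$ share the same Ext-quiver, but the first has $R(2AB)$ as its good quotient while the second has $W(2B)$, and these have $8$ versus $6$ support $\tau$-tilting modules. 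It is only after restricting to Holm's list of types that every tame block with a given Ext-quiver reduces to the same string algebra (in fact to one of only five, not ten). So the case-by-case bookkeeping you describe must be carried out over Holm's list, not Erdmann's. With that adjustment, your outline matches the paper's proof.
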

The actual computation of the $g$-vectors and Hasse quivers, which we present in the form of several tables, has been relegated to Appendix~\ref{tameblocks}.
	
Using a result of Aihara and Mizuno~\cite{MR3362259}, we deduce the following theorem:
\begin{theorem}
	All tilting complexes over an algebra of dihedral, semidihedral or quaternion type can be obtained from $A$ (as a module over itself) by iterated tilting mutation.
\end{theorem}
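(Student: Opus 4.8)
The plan is to combine the classification carried out above with the silting-connectedness machinery of Aihara and Aihara--Mizuno. First recall that algebras of dihedral, semidihedral and quaternion type are symmetric by definition, so that over such an algebra $A$ every object of $\Kscr^b(\projC_A)$ which is silting is automatically tilting, and silting mutation coincides with tilting mutation (as already noted above). It therefore suffices to prove that iterated silting mutation acts transitively on the set of basic silting complexes of $\Kscr^b(\projC_A)$; since $A$ itself, viewed as a complex concentrated in degree $0$, is one of these, the statement will follow.

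The key intermediate step is to show that $A$ is \emph{silting-discrete}. For this I would use the criterion of Aihara--Mizuno~\cite{MR3362259}: a symmetric algebra $B$ is silting-discrete as soon as every algebra in the derived equivalence class of $B$ has only finitely many two-term silting complexes (equivalently, only finitely many support $\tau$-tilting modules). So let $A$ be of dihedral, semidihedral or quaternion type and let $B$ be derived equivalent to $A$. Being of dihedral, semidihedral or quaternion type is preserved under derived equivalence --- this class is closed under derived equivalence, as follows from Holm's derived equivalence classification of these algebras (or, more intrinsically, from the fact that for self-injective algebras the stable category, and hence the shape of the stable Auslander--Reiten quiver, is a derived invariant) --- so $B$ is again of one of these three types. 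But the classification obtained in Section~\ref{Tame blocks}, via Theorem~\ref{maintheorem} together with the string-algebra analysis of Section~\ref{separated}, shows that every algebra of dihedral, semidihedral or quaternion type is $\tau$-tilting finite; in particular $B$ has only finitely many two-term silting complexes. Hence the hypothesis of the criterion is met and $A$ is silting-discrete.

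Finally, a silting-discrete algebra is silting-connected by Aihara's theorem: its basic silting complexes form a single class under iterated silting mutation. Applying this to $A$ and noting again that silting and tilting complexes coincide, we conclude that every tilting complex over $A$ is obtained from $A$ by iterated tilting mutation. The only genuine point requiring care in this argument is verifying the hypothesis of the Aihara--Mizuno criterion, namely that $\tau$-tilting finiteness holds not merely for $A$ but uniformly across its entire derived equivalence class; this is exactly what the closure of the class of dihedral/semidihedral/quaternion type algebras under derived equivalence provides, and everything else is a direct assembly of the results established earlier in the paper with the cited silting-connectedness theorems.
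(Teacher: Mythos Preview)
Your proposal is correct and follows essentially the same route as the paper's own proof: closure of the class under derived equivalence (which the paper cites from Holm, \cite[Proposition~2.1]{MR1656577}) ensures that every algebra in the derived equivalence class is $\tau$-tilting finite, and then the Aihara--Mizuno machinery (the paper's Proposition~\ref{nterm} together with Theorem~\ref{connected}) yields connectedness of the mutation quiver. The only difference is packaging: you phrase the intermediate step as ``silting-discrete'', whereas the paper invokes Proposition~\ref{nterm} and Theorem~\ref{connected} directly, but these amount to the same thing.
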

This implies in particular that if $B$ is another algebra and $X\in \mathcal D^b(A^{\rm op} \otimes B)$ is a two-sided tilting complex, then there exists a sequence of algebras $A=A_0,\ A_1, \ldots, A_n=B$ and two-sided two-term tilting complexes $X_i\in \mathcal D^b(A_{i-1}^{\rm op}\otimes A_{i})$ such that $X\cong X_1\otimes_{A_1}^{L} \cdots \otimes_{A_{n-1}}^L X_{n}$.

\section{Preliminaries}
\label{prelims}
Throughout this paper, $k$ denotes an algebraically closed field of arbitrary characteristic, and $A$ is a basic finite-dimensional $k$-algebra with Jacobson radical $\rad(A)$. The category of finitely generated right $A$-modules is denoted by $\modC_A$ and the subcategory of finitely generated  projective $A$-modules is denoted by $\projC_A$. Let $P_1, \ldots, P_l$ denote the non-isomorphic projective indecomposable $A$-modules. By $\tau$ we denote the Auslander-Reiten translate for $A$. The category of bounded complexes of projective modules is denoted by $\Cscr^b(\projC_A)$. Moreover, $\Kscr^b(\projC_A)$ denotes the corresponding homotopy category and $K_0(\projC_A)$ denotes its Grothendieck group. For any $M \in \modC_A$, $| M |$ is defined as the number of indecomposable direct summands of $M$. We will use the same notation for complexes.

We will now give a short summary of the theory of silting complexes and the theory of support $\tau$-tilting modules introduced in \cite{MR3187626}. 
\subsection{Two-term silting complexes}

\begin{definition}
A complex $C=C^{\bullet} \in \Kscr^b(\projC_A)$ is called \emph{two-term} if $C^i =0$ for all $i \neq 0,-1$.
\end{definition}

\begin{definition}
A complex $C \in \Kscr^b(\projC_A)$ is called
	\begin{enumerate}
	\item \emph{presilting} if $\Hom_{ \Kscr^b(\projC_A)}(C,C[i])=0$ for $i>0$,
	\item \emph{silting} if it is presilting and generates $\Kscr^b(\projC_A)$.
	\end{enumerate}
\end{definition}

It can be shown that a silting complex has exactly $|A|$ summands. 

\begin{remark}
A two-term presilting complex is also known as a \emph{rigid} two-term complex. These terms will be used interchangeably.
\end{remark}

On the set of basic silting complexes, one can define a partial order as follows:

\begin{theorem}\cite[Theorem 2.11]{MR2927802}
\label{order}
For basic silting complexes $C$ and $D$, we write $D \leq C$ if 
	$$
	\Hom_{\Kscr^b(\projC_A)}(C,D[1])=0.
	$$
This defines a partial order on the set of silting complexes. 
\end{theorem}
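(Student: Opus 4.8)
The plan is to verify the three axioms of a partial order directly, after one harmless reformulation. Since we are working with two-term complexes, I would first record the elementary fact that for $X,Y\in\Kscr^b(\projC_A)$ concentrated in degrees $0,-1$ one has $\Hom_{\Kscr^b(\projC_A)}(X,Y[i])=0$ whenever $|i|\geq 2$, because the underlying complexes then have no overlapping terms and hence no nonzero chain maps. Thus for (two-term) silting complexes $C,D$ the condition $\Hom_{\Kscr^b(\projC_A)}(C,D[1])=0$ is equivalent to $\Hom_{\Kscr^b(\projC_A)}(C,D[i])=0$ for all $i>0$, and I would work with the latter formulation throughout. Introduce the subcategory $\mathcal{U}_C=\{X\in\Kscr^b(\projC_A)\mid \Hom_{\Kscr^b(\projC_A)}(C,X[i])=0\text{ for all }i>0\}$ (the co-aisle of the co-t-structure attached to $C$), so that by the above $D\leq C$ is the same as $D\in\mathcal{U}_C$. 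Applying $\Hom_{\Kscr^b(\projC_A)}(C,-)$ to triangles shows at once that $\mathcal{U}_C$ is closed under direct summands, under extensions, and under the shift $[1]$. Reflexivity, $C\leq C$, is then immediate: it says $\Hom_{\Kscr^b(\projC_A)}(C,C[1])=0$, which holds because a silting complex is presilting.

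For antisymmetry, suppose $D\leq C$ and $C\leq D$. Then $\Hom_{\Kscr^b(\projC_A)}(C,D[i])=0=\Hom_{\Kscr^b(\projC_A)}(D,C[i])$ for all $i>0$, and since $C$ and $D$ are individually presilting this says precisely that $C\oplus D$ is presilting. I would now invoke the silting analogue of Bongartz completion: every presilting complex is a direct summand of some silting complex. Choosing a basic silting complex $T$ with $C\oplus D\in\add T$, and recalling that a basic silting complex has exactly $|A|$ pairwise non-isomorphic indecomposable summands, the inclusion $\add C\subseteq\add T$ between two such sets of the same cardinality forces $\add C=\add T$; hence $D\in\add T=\add C$, and by symmetry $C\in\add D$, so $C\cong D$ in $\Kscr^b(\projC_A)$.

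For transitivity, assume $E\leq D$ and $D\leq C$; I must show $E\in\mathcal{U}_C$. From $D\leq C$ we already know $D\in\mathcal{U}_C$. The key point is the description of $\mathcal{U}_D$ for a \emph{silting} complex $D$: it is the co-aisle of the bounded co-t-structure on $\Kscr^b(\projC_A)$ whose co-heart is $\add D$, and concretely it is the smallest full subcategory containing $D$ and closed under direct summands, extensions and $[1]$ (equivalently $\mathcal{U}_D=\bigcup_{n\geq 0}\add D\ast\add D[1]\ast\cdots\ast\add D[n]$, reflecting that $\Hom_{\Kscr^b(\projC_A)}(D,E[i])=0$ for $i>0$ forces $E$ to be a finite iterated extension of summands of the $D[n]$, $n\geq 0$). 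Granting this, since $\mathcal{U}_C$ contains $D$ and enjoys exactly the three closure properties just listed, it contains $\mathcal{U}_D$. Hence $E\leq D$ gives $E\in\mathcal{U}_D\subseteq\mathcal{U}_C$, i.e. $E\leq C$, as required.

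The only inputs that are not pure bookkeeping are (a) the Bongartz-type completion of presilting complexes to silting complexes, and (b) the identification of $\mathcal{U}_D$, for $D$ silting, with the co-aisle of a bounded co-t-structure with co-heart $\add D$. I expect (b) to be the main obstacle: it rests on the correspondence between silting complexes and bounded co-t-structures, and in particular on the boundedness statement that every object of $\Kscr^b(\projC_A)$ is a finite iterated extension of shifts of objects in $\add D$ — this is where the silting (not merely presilting) hypothesis on $D$, via generation of $\Kscr^b(\projC_A)$, is genuinely used. In the two-term situation one can cut (b) down somewhat by arguing degree by degree, but some appeal to the silting property of $D$ remains unavoidable; everything else reduces to the long exact sequences for $\Hom_{\Kscr^b(\projC_A)}(C,-)$ together with the summand count $|T|=|A|$ for silting $T$ recalled in the text above.
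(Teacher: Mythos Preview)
The paper does not give its own proof of this theorem; it is simply cited from Aihara--Iyama. So there is nothing on the paper's side to compare against, and I will just comment on your argument.

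Your opening restriction is premature: the theorem as stated is for \emph{arbitrary} basic silting complexes in $\Kscr^b(\projC_A)$, not just two-term ones, and the paper does use the general version (the quivers $H(A)$ and $Q(A)$ involve all silting complexes, and Theorem~\ref{connected} is applied to arbitrary $n$-term complexes). Your reduction of the condition $\Hom_{\Kscr^b(\projC_A)}(C,D[1])=0$ to $\Hom_{\Kscr^b(\projC_A)}(C,D[i])=0$ for all $i>0$ via ``no overlapping terms for $|i|\geq 2$'' is valid only in the two-term case. For general silting complexes the equivalence still holds, but it is a genuine lemma (one uses the co-t-structure attached to $C$ to show that $\Hom(C,D[1])=0$ already forces $D\in\mathcal U_C$), and you have not supplied that argument. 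Without it your antisymmetry step does not get off the ground, since from $\Hom(C,D[1])=0=\Hom(D,C[1])$ alone you cannot conclude that $C\oplus D$ is presilting.

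Two smaller remarks. First, in the antisymmetry step you do not need Bongartz completion: once $C\oplus D$ is presilting it is automatically \emph{silting}, because $C$ already generates $\Kscr^b(\projC_A)$; so you may take $T$ to be the basic version of $C\oplus D$ directly and avoid invoking a result (completion of arbitrary presilting complexes) that is in fact not known in general. Second, the overall architecture of your argument---reflexivity from the presilting condition, antisymmetry by counting indecomposable summands, transitivity via the co-aisle description $\mathcal U_D=\bigcup_{n\geq 0}\add D\ast\add D[1]\ast\cdots\ast\add D[n]$---is precisely the Aihara--Iyama approach, so once you fill the gap above you will have reproduced their proof.
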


Let us denote the Hasse quiver of this poset by $H(A)$. Now let $C=D \oplus E$ be a basic silting complex with $D$ indecomposable. Then there is a triangle (in $\mathcal K^b(\projC_A)$)
	$$
	D \xrightarrow{f} E' \to D' \to D[1],
	$$
such that $f$ is a minimal left ${\tt add}\ E$-approximation of $D$. 

\begin{definition}
The left mutation of $C$ with respect to $D$ is defined to be
	$$
	\mu_{D}^-(C)=D' \oplus E.
	$$
The right mutation $\mu_{D}^+(C)$ is defined dually.
\end{definition}

We denote by $Q(A)$ the left mutation quiver of $A$ with vertices corresponding to basic silting complexes,  there being an arrow $C \to C'$ whenever $C'=\mu_{D}^-(C)$ for some indecomposable direct summand $D$ of $C$.

\begin{remark}
For symmetric algebras, silting complexes are in fact tilting complexes, so $Q(A)$ is the mutation quiver of tilting complexes.
\end{remark}

\begin{theorem}\cite[Theorem 2.35]{MR2927802}
\label{compare}
The quivers $H(A)$ and $Q(A)$ are the same.
\end{theorem}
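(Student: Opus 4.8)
The quivers $H(A)$ and $Q(A)$ have the same vertex set, namely the basic silting complexes, so the plan is to show they have the same arrows; this splits into the two inclusions $Q(A)\subseteq H(A)$ and $H(A)\subseteq Q(A)$. Fix a basic silting complex $C=D\oplus E$ with $D$ indecomposable, let $D\xrightarrow{f}E'\xrightarrow{g}D'\xrightarrow{h}D[1]$ be the triangle defining $\mu_D^-(C)=D'\oplus E$ (so $E'\in\add E$ and $f$ is a minimal left $\add E$-approximation), and recall the standard facts that $\mu_D^-(C)$ is again a basic silting complex, with $D'$ indecomposable, and that a presilting complex has at most $|A|$ indecomposable summands. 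All $\Hom$-spaces below are taken in $\Kscr^b(\projC_A)$. The first point is that left mutation strictly lowers the order: $\mu_D^-(C)<C$. Indeed $\Hom(C,E[1])=0$ since $C$ is presilting, and applying $\Hom(C,-)$ to the mutation triangle squeezes $\Hom(C,D'[1])$ between $\Hom(C,E'[1])=0$ and $\Hom(C,D[2])=0$ (both vanish by presiltingness, using $E'\in\add E$); hence $\Hom(C,\mu_D^-(C)[1])=0$, i.e. $\mu_D^-(C)\le C$. Moreover $\mu_D^-(C)\ne C$: since $D\notin\add E$ the triangle is non-split, so $h\ne 0$, whereas $D'\cong D$ would force $h\in\Hom(D,D[1])=0$.

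The technical heart is the following comparison between mutation and the order, which is where Aihara--Iyama do the real work in proving this theorem in~\cite{MR2927802}: \emph{if $C'$ is a basic silting complex with $C'\le C$ and $D\notin\add C'$, then $C'\le\mu_D^-(C)$.} Unwinding, one needs $\Hom(\mu_D^-(C),C'[1])=0$; the summand $\Hom(E,C'[1])$ vanishes because $E\in\add C$ and $\Hom(C,C'[1])=0$, so the real content is $\Hom(D',C'[1])=0$. Applying $\Hom(-,C')$ to the mutation triangle and using $\Hom(E',C'[1])=0$, one identifies $\Hom(D',C'[1])$ with the cokernel of $f^{*}\colon\Hom(E',C')\to\Hom(D,C')$, so what must be shown is that every morphism $D\to C'$ factors through the approximation $f\colon D\to E'$. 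This is exactly where the hypotheses $C'\le C$ and $D\notin\add C'$ enter, through the minimality of $f$ as a left $\add E$-approximation; I would isolate this as the one genuinely nontrivial lemma and take it from~\cite{MR2927802}.

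Given these two inputs, both inclusions are short. For $H(A)\subseteq Q(A)$: let $C'\lessdot C$ be a covering relation; since $C'<C$, some indecomposable summand $D$ of $C$ is not a summand of $C'$, so the key lemma gives $C'\le\mu_D^-(C)$, while $\mu_D^-(C)<C$ by the first paragraph, and therefore the covering property forces $C'=\mu_D^-(C)$, which is an arrow of $Q(A)$. For $Q(A)\subseteq H(A)$: let $C'=\mu_D^-(C)<C$ and suppose a basic silting complex $C''$ satisfies $C'\le C''\le C$ with $C''\ne C$; choose an indecomposable summand $D_0$ of $C$ not in $C''$. If $D_0\ne D$ then $D_0\in\add E\subseteq\add C'$, so $\Hom(C'',D_0[1])=0$ (from $C'\le C''$) and $\Hom(D_0,C''[1])=0$ (from $C''\le C$), making $D_0\oplus C''$ presilting with $|A|+1$ indecomposable summands --- impossible. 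Hence $D_0=D$; since $C''$ does miss some indecomposable summand of $C$, that summand must be $D$, so $D\notin\add C''$, and the key lemma applied to $C''\le C$ gives $C''\le\mu_D^-(C)=C'$, whence $C''=C'$ by antisymmetry of the order. Thus nothing lies strictly between $\mu_D^-(C)$ and $C$, i.e. $\mu_D^-(C)\lessdot C$ in $H(A)$. Combining the two inclusions yields $H(A)=Q(A)$.

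The main obstacle is the key lemma of the second paragraph. Everything in the first and third paragraphs is bookkeeping with long exact sequences, Krull--Schmidt, and the standard facts that mutation preserves siltingness and that presilting complexes have at most $|A|$ summands; by contrast, the key lemma is the point at which one must genuinely exploit the universal property of the approximation triangle, and it carries the real weight of the theorem.
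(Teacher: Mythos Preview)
The paper does not prove this theorem; it is stated in the preliminaries section purely as a citation of \cite[Theorem~2.35]{MR2927802}, with no argument supplied. Your sketch is therefore not being compared against anything in the present paper, but it is a faithful outline of the proof as it appears in Aihara--Iyama: the split into the two inclusions, the reduction of both to the key comparison lemma of your second paragraph, and the counting trick with an impossible presilting complex are all as in the original, and you correctly identify that lemma as the place where the real work happens.

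One small point worth flagging: in your final paragraph you need $D_0\oplus C''$ to be presilting in all positive degrees, not merely rigid in degree~$1$, in order to invoke the bound $|D_0\oplus C''|\le |A|$. This is fine, because for silting complexes the relation $C''\le C$ is in fact equivalent to $\Hom(C,C''[i])=0$ for all $i>0$ (and likewise for $C'\le C''$), so the required higher vanishings do hold; but that equivalence is itself part of the Aihara--Iyama package (their Theorem~2.11 and surrounding discussion), so you are implicitly leaning on the same reference there as well.
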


In general, this quiver can be disconnected and has no regularity properties. However, if we restrict our attention to basic two-term silting complexes, then more structure appears. In fact, using the theory of support $\tau$-tilting modules, one can prove the following theorem.

\begin{theorem}\cite[Corollary 3.8]{MR3187626}\label{theorem_bongartz}
Any basic two-term rigid complex $C$ with $| C |=|A|-1$ is a direct summand of exactly two basic two-term silting complexes. Moreover, if two basic two-term silting complexes $C$ and $D$ have $|A|-1$ summands in common, then $C$ is a left or right mutation of $D$. 
\end{theorem}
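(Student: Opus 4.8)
The plan is to transport the question entirely into the world of support $\tau$-tilting modules, where the corresponding statement is precisely \cite[Corollary 3.8]{MR3187626}, and then use the equivalence of categories established by Adachi--Iyama--Reiten between two-term silting complexes in $\Kscr^b(\projC_A)$ and support $\tau$-tilting $A$-modules. Concretely, I would first recall that this equivalence is an isomorphism of posets: a basic two-term silting complex $C$ corresponds to the support $\tau$-tilting module $M = H^0(C)$ (the zeroth cohomology), that $|C| = |M|$, that $C$ being a direct summand decomposition $C = C' \oplus C''$ corresponds to a direct sum decomposition $M = M' \oplus M''$, and crucially that a two-term \emph{presilting} (equivalently rigid) complex with $|C| = |A| - 1$ corresponds to a \emph{$\tau$-rigid pair} (or support $\tau$-rigid module) with $|A|-1$ indecomposable summands, i.e. an ``almost complete'' support $\tau$-tilting module. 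I would state this correspondence carefully, citing the relevant statements from \cite{MR3187626}, rather than reprove it.

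Next, I would invoke the $\tau$-tilting analogue of Bongartz completion: by \cite{MR3187626} every basic $\tau$-rigid pair $(M, P)$ with $|M \oplus P| = |A| - 1$ can be completed to a basic support $\tau$-tilting module, and in fact to exactly two of them; moreover two basic support $\tau$-tilting modules sharing $|A|-1$ common indecomposable summands are related by mutation, one being the $\mu$-mutation of the other at the differing summand. Pulling this back through the poset isomorphism immediately yields both halves of the theorem: the first sentence (a two-term rigid $C$ with $|C| = |A|-1$ is a summand of exactly two basic two-term silting complexes) and the second sentence (if $C$ and $D$ are basic two-term silting with $|A|-1$ summands in common then they differ by a left or right mutation). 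For the mutation statement one also needs that the combinatorial mutation of support $\tau$-tilting modules defined in \cite{MR3187626} matches the silting mutation $\mu^{\pm}$ recalled above; this compatibility is also proved in \cite{MR3187626}, so I would simply cite it.

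The one point requiring a little genuine argument, rather than pure citation, is the distinction between left and right mutation in the second statement: given that $C$ and $D$ share $|A|-1$ summands, they are both completions of the common rigid complex $C_0$ of size $|A|-1$, hence $\{C, D\}$ is exactly the two-element set of completions, so $D = \mu_{D_0}^{\pm}(C)$ for the appropriate sign; to see which sign occurs one uses the partial order of Theorem~\ref{order}, together with the fact (from \cite{MR2927802} or \cite{MR3187626}) that among the two completions one is strictly larger than the other, the larger being obtained from the smaller by left mutation. So I would argue: order $C$ and $D$; if $D \leq C$ then $D = \mu^-_{D_0}(C)$, and if $C \leq D$ then $D = \mu^+_{D_0}(C)$, which is exactly ``a left or right mutation.''

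I expect the main obstacle to be bookkeeping rather than mathematics: namely, being scrupulous about the bijection between two-term presilting complexes of size $|A|-1$ and almost-complete support $\tau$-rigid \emph{pairs} (the projective part $P$ in a pair $(M,P)$ must be tracked, since $H^0$ of a complex concentrated in degree $-1$ is zero), and making sure the two notions of mutation really are identified under the bijection with matching left/right labels. Once that dictionary is set up precisely, the theorem is a direct translation of \cite[Corollary 3.8]{MR3187626}, and the proof is short.
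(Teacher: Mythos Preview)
The paper does not give its own proof of this theorem: it is stated in the preliminaries section as a citation of \cite[Corollary 3.8]{MR3187626} and used as a black box thereafter. So there is no argument in the paper to compare your proposal against.

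Your sketch is essentially a reconstruction of how the result arises in \cite{MR3187626}: transport the problem to support $\tau$-tilting pairs via the bijection of Theorem~\ref{bijection}, invoke the Bongartz-type completion there, and pull the mutation structure back. That is correct in outline and is exactly the route Adachi--Iyama--Reiten take. One small slip: you write ``the larger being obtained from the smaller by left mutation,'' but left mutation decreases in the partial order of Theorem~\ref{order}; your subsequent sentence ``if $D \leq C$ then $D = \mu^-_{D_0}(C)$'' has it the right way round, so this is just an expository inconsistency. Also, since the theorem is literally \cite[Corollary 3.8]{MR3187626} combined with Theorem~\ref{bijection}, your ``proof'' is really just an expanded citation, which is appropriate here.
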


This means that if we denote by $Q_2(A)$ the full subquiver of $Q(A)$ containing the vertices corresponding to basic two-term silting complexes, then we get an $|A|$-regular graph. With an eye towards explicit calculations, the following properties are very useful.

\begin{proposition}\cite[Corollary 3.10]{MR3187626}\label{prop_connected}
If $Q_2(A)$ has a finite connected component $C$, then $Q_2(A)=C$.
\end{proposition}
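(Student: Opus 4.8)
The plan is to realise $Q_2(A)$ as the Hasse quiver of a bounded poset and then play this off against the $|A|$-regularity supplied by Theorem~\ref{theorem_bongartz}. First I would observe that the poset $\mathcal P$ of basic two-term silting complexes, ordered as in Theorem~\ref{order}, has a maximum and a minimum: these are the stalk complexes $A$ concentrated in degree $0$ and $A[1]$ concentrated in degree $-1$. Indeed, for any two-term complex $T$ one has $\Hom_{\Kscr^b(\projC_A)}(A,T[1])=0$, since $T[1]$ is concentrated in degrees $-2$ and $-1$, so $T\leq A$; dually $\Hom_{\Kscr^b(\projC_A)}(T,A[2])=0$, so $A[1]\leq T$. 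Next, by Theorem~\ref{compare} combined with Theorem~\ref{theorem_bongartz} (and the bijection between two-term silting complexes and support $\tau$-tilting modules), $Q_2(A)$ is exactly the Hasse quiver of $\mathcal P$: its arrows are covering relations, and each vertex has precisely $|A|$ neighbours, obtained by mutating the indecomposable summands one at a time. In particular $Q_2(A)$ is acyclic.

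Then I would pin down its sources and sinks. An arrow $T\to T'$ of $Q_2(A)$ records a covering relation $T'\lessdot T$, so $A$ has no incoming arrow (nothing covers a maximum); conversely, by the mutation theory of support $\tau$-tilting modules, every two-term silting complex $T\neq A$ has a mutation-neighbour strictly above it, hence an incoming arrow. Dually $A[1]$ is the unique vertex without an outgoing arrow. Now let $C$ be the given finite connected component. As a finite acyclic quiver it has a source and a sink, and since $C$ is a full connected component these are a source and a sink of the whole of $Q_2(A)$; therefore they are $A$ and $A[1]$. In particular $A,A[1]\in C$, so the set of two-term silting complexes reachable from $A$ by iterated mutation is finite.

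It then remains to show that every two-term silting complex $T$ belongs to $C$. One approach: the set $S=\{T'\in C : T'\geq T\}$ contains $A$, so it is non-empty and finite; pick $T_0$ minimal in $S$. If $T_0\neq T$ then $T_0>T$, hence $T_0\neq A[1]$, and a maximal element $T_1$ of $\{T''\,:\,T\leq T''<T_0\}$ gives a cover $T_1\lessdot T_0$ with $T\leq T_1$; since $T_1$ is then a mutation-neighbour of $T_0\in C$ it lies in $C$, contradicting minimality of $T_0$. So $T=T_0\in C$, and as $T$ was arbitrary, $Q_2(A)=C$.

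The step I expect to be the main obstacle is precisely the existence of the maximal element $T_1$ above: it needs the interval $\{T''\,:\,T\leq T''<T_0\}$ to be finite, which is not for free --- the Hasse quiver of a bounded poset can be disconnected, and even $|A|$-regular, once the poset is infinite, as already happens for the path algebra of the Kronecker quiver (where $A$ and $A[1]$ wind up in different infinite components of $Q_2(A)$, so the proposition holds there only vacuously). The finiteness of $C$ must therefore be converted into finiteness of all of $\mathcal P$. The way to do this is to show that a finite mutation class of $A$ forces $A$ to be $\tau$-tilting finite; once that is known, $\mathcal P$ is a finite bounded poset, whose Hasse quiver is automatically connected (every element lies on a maximal chain from the bottom to the top), and so $Q_2(A)=C$. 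Establishing that last implication is where the real content of~\cite{MR3187626} is needed, and is the crux of the argument.
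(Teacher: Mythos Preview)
The paper does not prove this proposition; it is quoted from \cite{MR3187626}, so there is nothing in the paper to compare against. What follows assesses your sketch on its own merits.

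The shape is right --- show $A\in C$, then descend --- and you correctly locate the obstruction in the descent step. But your proposed fix is circular: once $A\in C$ is established, ``finite mutation class of $A$ forces $\tau$-tilting finiteness'' is equivalent to the proposition itself (if $A$ is $\tau$-tilting finite then every component of $Q_2(A)$ is finite, your own source argument puts $A$ in each one, so there is only one; conversely $Q_2(A)=C$ with $C$ finite is $\tau$-tilting finiteness). So your last paragraph reduces the proposition to itself rather than to an earlier result. What actually closes the gap in \cite{MR3187626} is a local descent lemma: whenever $T_0>T$ are basic support $\tau$-tilting, some right mutation $T_1$ of $T_0$ satisfies $T_0>T_1\ge T$. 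With this in hand your argument works verbatim --- for $T_0$ minimal in $\{T'\in C:T'\ge T\}$, if $T_0>T$ the lemma supplies a neighbour $T_1\in C$ with $T_1<T_0$ and $T_1\ge T$, a contradiction. Note that the dual of this same lemma is exactly what you are already invoking, under the label ``mutation theory'', when you assert that every $T\neq A$ has a mutation-neighbour strictly above it; that claim is not a formal consequence of $A$ being the maximum of the poset. So the missing input is not a global finiteness statement but this one mutation-theoretic lemma, which appears in \cite{MR3187626} prior to the corollary you are after.
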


In some cases, finiteness of $Q_2(A)$ implies that for every $n$, there are only finitely many $n$-term silting complexes.

\begin{proposition}\cite[Theorem 2.4]{MR3362259}\cite[Proposition 6.9]{Adachi:2015aa}
\label{nterm}
Let $A$ be a symmetric algebra. If for any tilting complex $C$ in the connected component of $Q(A)$ containing $A$, the set of basic two-term $\End_{\Kscr^b(\projC_A)}(C)$-tilting complexes is finite, then for every $n$, the set of basic $n$-term $A$-tilting complexes is finite. 
\end{proposition}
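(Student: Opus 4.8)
The plan is to reduce the claim to the case $n=2$ by induction on $n$, and to handle the inductive step using iterated tilting mutation together with the hypothesis. First I would recall that, since $A$ is symmetric, every tilting complex $C$ in the connected component of $Q(A)$ containing $A$ has a symmetric endomorphism algebra $B := \End_{\Kscr^b(\projC_A)}(C)$, and that $C$ induces a triangle equivalence $\Kscr^b(\projC_A) \simeq \Kscr^b(\projC_B)$ under which $C$ corresponds to $B$ (as a module over itself). So the hypothesis says precisely: for each such $B$, the set of basic two-term $B$-tilting complexes is finite, i.e. $Q_2(B)$ is a finite graph.

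The key observation I would exploit is that an $(n+1)$-term tilting complex over $A$ can be ``peeled'' into a two-term part and an $n$-term part via mutation. More precisely, suppose $T$ is a basic $(n+1)$-term tilting complex over $A$ lying in the connected component of $Q(A)$ containing $A$. By the theory of silting mutation (Aihara--Iyama), iterated mutation from $A$ reaches $T$, and one can arrange that $T = C \otimes_B^{\mathrm L} (\text{an } n\text{-term complex over } B)$ for some two-term tilting complex $C$ reached from $A$. Concretely: among the complexes on a mutation path from $A$ to $T$, consider how the ``width'' (number of nonzero terms, suitably normalized) grows; one shows that $T$ is obtained from some $n$-term tilting complex $T'$ by applying a two-term tilting complex over $\End(T')$, or dually that $T$ factors as a two-term complex over $A$ followed by an $n$-term complex. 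This is exactly the content behind Proposition~\ref{nterm}'s formulation in terms of the endomorphism algebras $\End_{\Kscr^b(\projC_A)}(C)$ for $C$ two-term.

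So the inductive step runs as follows. Assume the set of basic $n$-term $A$-tilting complexes is finite for every symmetric algebra derived equivalent to $A$ via a complex in the relevant component (this is vacuous strengthening since derived equivalence preserves the hypothesis — the class of algebras satisfying the hypothesis is closed under passing to $\End$ of a two-term tilting complex in the component, because $Q_2$ of such an algebra embeds into $Q(A)$ near $A$). Given a basic $(n+1)$-term $A$-tilting complex $T$, write $T \cong C \otimes_B^{\mathrm L} T''$ where $C$ is one of the finitely many two-term $A$-tilting complexes in the component containing $A$ (finite by hypothesis and Proposition~\ref{prop_connected}), $B = \End_{\Kscr^b(\projC_A)}(C)$, and $T''$ is a basic $n$-term $B$-tilting complex. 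By the inductive hypothesis applied to $B$ (which again satisfies the standing hypothesis), there are only finitely many choices for $T''$. Since there are finitely many choices for $C$ and finitely many for $T''$, there are finitely many $T$. The base case $n=1$ is trivial ($A$ itself, up to isomorphism) and $n=2$ is the hypothesis together with Proposition~\ref{prop_connected}.

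The main obstacle, and the step that requires genuine care, is justifying the factorization $T \cong C \otimes_B^{\mathrm L} T''$ with $C$ two-term: one must show that every $(n+1)$-term tilting complex in the good component ``begins'' with a two-term tilting complex over $A$, so that the residual complex over $B$ has one fewer term. This is where the precise structure of silting mutation enters — one tracks that mutation changes the complex in a controlled ``layer by layer'' fashion, so that a minimal mutation path from $A$ of a complex of width $n+1$ passes through a width-$2$ complex whose endomorphism algebra ``sees'' $T$ as a width-$n$ object. I would cite \cite{MR3362259} and \cite{Adachi:2015aa} for this, as the proposition is explicitly attributed to them; the remaining bookkeeping (basicness, that the classes of algebras involved are closed under the operation, matching up the combinatorics of $Q$ across the derived equivalence induced by $C$) is routine.
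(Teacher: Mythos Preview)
The paper does not give its own proof of this proposition; it is simply quoted from \cite{MR3362259} and \cite{Adachi:2015aa} without argument. So there is nothing in the paper to compare against, and your sketch should be judged on its own merits.

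Your overall strategy (induction on the length $n$, factoring an $(n+1)$-term tilting complex through a two-term one over $A$ and an $n$-term one over its endomorphism ring) is indeed the approach of the cited references. However, there is a genuine gap in your execution. You restrict throughout to tilting complexes $T$ lying in the connected component of $Q(A)$ containing $A$, and you justify the factorization $T \cong C \otimes_B^{\mathrm L} T''$ by tracing a mutation path from $A$ to $T$. But the conclusion of the proposition is that \emph{all} basic $n$-term tilting complexes are finite in number, not just those in the component of $A$. You cannot assume $T$ is reachable from $A$ by mutation: that is exactly what one wants to \emph{deduce} afterwards via Theorem~\ref{connected}, so assuming it here is circular.

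The correct argument avoids this by using the partial order rather than mutation paths. After a shift, any $n$-term tilting complex $T$ satisfies $A \geq T \geq A[n-1]$. The key lemma (due to Aihara) is that whenever $A > T$ there exists an irreducible left mutation $C$ of $A$ (hence a two-term tilting complex, automatically in the component of $A$) with $A > C \geq T$; this uses only the order relation $\Hom(A,T[1])=0$, not any connectivity assumption on $T$. Passing to $B=\End(C)$, the image of $T$ now satisfies $B \geq T' \geq B[n-2]$, and one inducts. Since $C$ is a mutation of $A$, it lies in the component containing $A$, so the hypothesis applies to $B$, and the induction closes. Your sketch has the right shape but leans on reachability of $T$ where it should lean on the order-theoretic lemma; without that, the argument does not cover arbitrary $n$-term complexes and the subsequent application to connectivity would be vacuous.
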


\begin{theorem}\cite[Theorem 3.5]{MR3049676}
\label{connected}
If for every $n$, there are only finitely many isomorphism classes of basic $n$-term silting complexes, then $Q(A)$ is connected, i.e. mutation acts transitively on basic silting complexes.
\end{theorem}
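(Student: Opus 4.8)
We must show that the quiver $Q(A)$ is connected. The plan is to upgrade the poset structure on silting complexes, together with the finiteness hypothesis, into a purely combinatorial connectedness statement about finite intervals, and then to run an induction.

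First I would fix notation: let $\mathsf{silt}\,A$ denote the set of basic silting complexes, equipped with the partial order $\leq$ of Theorem~\ref{order}, so that by Theorem~\ref{compare} the left mutation quiver $Q(A)$ coincides with the Hasse quiver $H(A)$ of $(\mathsf{silt}\,A,\leq)$. Concretely this means: every covering relation $T'\lessdot T$ is an arrow $T\to T'$ of $Q(A)$, realised by an irreducible left mutation $T'=\mu_D^-(T)$, and left mutation is strictly order-decreasing. I would also import from Aihara--Iyama's silting mutation theory~\cite{MR2927802} the computation $\Hom_{\Kscr^b(\projC_A)}(A[i],T[j])=H^{j-i}(T)$ and its consequence that, for $m\geq 0$, the silting complexes lying in the interval $[A[m],A[-m]]:=\{T\in\mathsf{silt}\,A : A[m]\leq T\leq A[-m]\}$ are, up to the shift $[m]$, exactly the $(2m{+}1)$-term silting complexes; in particular $A$ itself lies in every such interval. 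Since any basic silting complex is a bounded complex of projectives, the Hom-characterisation of $\leq$ together with boundedness shows that a given $T\in\mathsf{silt}\,A$ satisfies $A[m]\leq T\leq A[-m]$ once $m\gg 0$. Combined with the hypothesis that there are only finitely many basic $n$-term silting complexes for each $n$, this yields: \emph{each interval $[A[m],A[-m]]$ is a finite poset, and every silting complex lies in one of them together with $A$.}

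The engine is the following claim, proved by strong induction on the cardinality of the interval: \emph{for any finite interval $[P,Q]\subseteq\mathsf{silt}\,A$, every $T\in[P,Q]$ is joined to $P$ by a path in $Q(A)$ lying inside $[P,Q]$.} If $|[P,Q]|=1$ there is nothing to do. Otherwise, given $T\neq P$, choose $P_1$ maximal in $[P,T]\setminus\{P\}$ (a finite nonempty poset, since $T\in[P,T]$). Then $P_1\lessdot P$ is a covering relation in $\mathsf{silt}\,A$: any $S$ with $P_1<S<P$ would satisfy $T\leq P_1<S\leq P$, hence lie in $[P,T]\setminus\{P\}$ strictly above $P_1$, contradicting maximality. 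Thus $P\to P_1$ is an arrow of $Q(A)$. Moreover $T\leq P_1$, so $T\in[P_1,T]\subseteq[P,Q]$, and $|[P_1,T]|<|[P,T]|\leq|[P,Q]|$ because $P\in[P,T]\setminus[P_1,T]$. By the inductive hypothesis $T$ is joined to $P_1$ inside $[P_1,T]$; prepending $P\to P_1$ joins it to $P$. Applying this claim to $[A[m],A[-m]]$ for $m$ large enough that it contains a prescribed $T$, and noting that it also contains $A$, we conclude that $T$ and $A$ are both joined to the maximum element $A[-m]$, hence lie in the same connected component of $Q(A)$. As $T$ was arbitrary, $Q(A)$ is connected, which is precisely the statement that iterated mutation acts transitively on basic silting complexes.

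I expect the main obstacle to be the structural input, namely the identification of $[A[m],A]$ with the class of $(m{+}1)$-term silting complexes: this is what turns the qualitative finiteness hypothesis into the finiteness of concrete intervals, and it rests on the full analysis of iterated silting mutation (and, for $m=1$, on the bijection with support $\tau$-tilting modules of~\cite{MR3187626}). The rest --- the Hom-computations, the verification that left mutation is order-decreasing and realises all covering relations (both subsumed in Theorem~\ref{compare}), and the induction above --- is routine once these facts are granted.
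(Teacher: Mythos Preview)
The paper does not supply a proof of this theorem; it simply quotes Aihara's result. Your overall strategy is exactly the one in \cite{MR3049676}: identify the $(2m{+}1)$-term silting complexes (up to shift) with the order interval $[A[m],A[-m]]$, use the hypothesis to make each such interval finite, observe that every silting complex lies in one of these together with $A$, and then argue that in any finite interval of $(\mathsf{silt}\,A,\leq)$ every element is connected to an extremal one in the Hasse quiver. So the plan is sound.

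However, your induction step is misdirected. You use the standard interval convention (you write $[A[m],A[-m]]$ and call $A[-m]$ its maximum), so in $[P,Q]$ the element $P$ is the minimum. For $T\in[P,Q]$ with $T\neq P$ you then take $P_1$ \emph{maximal} in $[P,T]\setminus\{P\}=\{S:P<S\leq T\}$ and assert $P_1\lessdot P$. But this set is bounded above by $T$, its maximal element is $T$ itself, and in any case every element of it lies strictly above $P$; so $P_1\lessdot P$ cannot hold. The sentence ``any $S$ with $P_1<S<P$ would satisfy $T\leq P_1<S\leq P$'' is likewise incompatible with $P\leq T$.

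The fix is a one-word change: choose $P_1$ \emph{minimal} in $[P,T]\setminus\{P\}$. Then nothing lies strictly between $P$ and $P_1$, so $P\lessdot P_1$ and there is an arrow $P_1\to P$ in $Q(A)$. Since $P_1\leq T$, we have $T\in[P_1,Q]$ with $|[P_1,Q]|<|[P,Q]|$ (as $P\notin[P_1,Q]$), and the inductive hypothesis connects $T$ to $P_1$; prepending the edge $P_1\to P$ finishes. Equivalently, you could run the dual argument connecting every $T$ to the top $Q$ by taking a maximal element of $[T,Q]\setminus\{Q\}$. With that correction the argument is correct and matches Aihara's.
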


Rigid two-term complexes have a complete numerical invariant.

\begin{theorem}\cite[Theorem 5.5]{MR3187626}
\label{g-vector}
A two-term rigid complex $C$ is uniquely determined by its class $[C] \in K_0(\projC_A)$.
\end{theorem}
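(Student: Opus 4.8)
The goal is to show that if $C$ and $D$ are two-term rigid (i.e. presilting) complexes with $[C]=[D]$ in $K_0(\projC_A)$, then $C\cong D$ in $\Kscr^b(\projC_A)$. My plan splits into a ``rigidity'' step, in which the class is shown to determine the two projective terms of the minimal model, and a second step in which the terms together with rigidity are shown to determine the differential. As a preliminary reduction, recall that every two‑term complex has a minimal model, that a two‑term complex $C=(C^{-1}\xrightarrow{d}C^0)$ is minimal precisely when $\operatorname{im}d\subseteq\rad C^0$, and that a homotopy equivalence between minimal complexes is an isomorphism of complexes. The contractible summands $P_i\xrightarrow{\id}P_i$ discarded in passing to the minimal model have class $[P_i]-[P_i]=0$, so $[C]=[C^0]-[C^{-1}]$ is unaffected; I may therefore assume $C$ and $D$ are minimal and must prove $C\cong D$ as complexes. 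Recall also that $K_0(\projC_A)$ is free abelian on the classes $[P_1],\dots,[P_l]$.

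\emph{Step 1: the class determines the terms.} I claim that for a minimal two‑term presilting complex $C$, no indecomposable projective is a summand of both $C^0$ and $C^{-1}$. Suppose $P_i$ occurs in both, and let $\varepsilon\colon C^{-1}\to C^0$ be the morphism which is the identity between one chosen copy of $P_i$ in each term and zero on all other summands. By the standard description of the homotopy category, $\Hom_{\Kscr^b(\projC_A)}(C,C[1])$ is the quotient of $\Hom_A(C^{-1},C^0)$ by the maps of the form $s\circ d-d\circ t$ with $s\in\End_A(C^0)$, $t\in\End_A(C^{-1})$; since this group vanishes, $\varepsilon=s\circ d-d\circ t$ for some such $s,t$. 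Composing with the inclusion of the chosen $P_i\hookrightarrow C^{-1}$ and the projection $C^0\twoheadrightarrow P_i$ onto the chosen copy yields $\id_{P_i}$ on the left, while on the right both summands factor through $d$ and hence have image in $\rad C^0$ by minimality; thus $\id_{P_i}\in\rad\End_A(P_i)$, a contradiction. Consequently $[C^0]$ is the part of $[C]$ with positive coefficients in the basis $\{[P_i]\}$ and $[C^{-1}]$ is (minus) the part with negative coefficients; since a projective module is determined by its class, $[C]=[D]$ forces $C^0\cong D^0$ and $C^{-1}\cong D^{-1}$.

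\emph{Step 2: the terms and rigidity determine the complex.} Write now $C=(X\xrightarrow{d}Y)$ and $D=(X\xrightarrow{e}Y)$. The idea is to pass through the correspondence between two‑term presilting complexes and $\tau$‑rigid pairs: splitting off from $X$ the largest direct summand annihilated by $d$ exhibits $d$, up to isomorphism of complexes, as $(f_M,0)$ with $f_M\colon P(\Omega M)\to P(M)=Y$ a minimal projective presentation of $M:=H^0(C)$, while the complement $P$ of $P(\Omega M)$ inside $X$ satisfies $\Hom_A(P,M)=0$ and $M$ is $\tau$‑rigid — all three facts falling out of the vanishing of $\Hom_{\Kscr^b(\projC_A)}(C,C[1])$ — and similarly for $D$ with a module $N$ and a projective $P'$. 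The problem thus becomes: recover the pair $(M,P)$ from $(X,Y)$. Using the criterion that a module with minimal projective presentation $f_M\colon Q_1\to Q_0$ is $\tau$‑rigid exactly when $\End_A(Q_0)\cdot f_M+f_M\cdot\End_A(Q_1)=\Hom_A(Q_1,Q_0)$ (equivalently, $\Hom_A(Q_0,M)\to\Hom_A(Q_1,M)$ is onto), one argues, by a careful Krull–Schmidt analysis separating the genuine presentation from the stalk part, that the decomposition $X\cong P(\Omega M)\oplus P$ and the presentation $f_M$ are forced. Hence $M\cong N$ and $P\cong P'$, so $d$ and $e$ lie in the same $\Aut_A(X)\times\Aut_A(Y)$‑orbit and $C\cong D$ in $\Kscr^b(\projC_A)$.

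I expect Step 2 — pinning down the differential, equivalently reconstructing the $\tau$‑rigid pair, from the bare pair of projective terms — to be the genuine obstacle; Step 1 and the reduction are formal once the homotopy relation is unpacked, whereas Step 2 is where $\tau$‑rigidity must be exploited substantively. It is worth noting that the rigidity hypothesis is genuinely needed already for Step 1: without it the class determines neither the complex nor even the terms of the minimal model — for instance, over $k[x]/(x^2)$ the minimal complex $(A\xrightarrow{\cdot x}A)$ has class $0$, exactly like the zero complex, yet is not contractible. Combined with the sign‑coherence established in Step 1, the theorem moreover shows that the classes of indecomposable two‑term presilting complexes are the rays of a (simplicial) fan in $K_0(\projC_A)$.
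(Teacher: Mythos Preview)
Your Step~1 is correct and is essentially the paper's argument: for a minimal two-term rigid complex, every map $C^{-1}\to C^0$ factors through the differential up to homotopy, hence has image in $\rad C^0$, and this rules out common indecomposable summands. The paper phrases this identically (Theorem~\ref{unique}, first paragraph).

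Step~2, however, has a genuine gap. You pass to the $\tau$-rigid pair $(M,P)$ attached to $C$ and assert that ``a careful Krull--Schmidt analysis'' forces the decomposition $X\cong P(\Omega M)\oplus P$ from the knowledge of $X$ and $Y$ alone. But this decomposition depends on the differential $d$, not just on $X$: a priori the two complexes $C$ and $D$ could split $X$ differently, yielding non-isomorphic modules $M$ and $N$ with $P(\Omega M)\oplus P \cong X \cong P(\Omega N)\oplus P'$ and $P(M)\cong Y\cong P(N)$. Krull--Schmidt tells you the multiset of indecomposable summands of $X$ is fixed, but not how they are partitioned between the ``presentation'' part and the ``stalk'' part. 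You yourself flag this as the hard step, and indeed no argument is supplied.

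The paper closes this gap by a short geometric argument which you almost have in hand. The criterion you quote, $\End_A(Y)\cdot d + d\cdot\End_A(X)=\Hom_A(X,Y)$, is precisely the statement that the differential of the orbit map $\Aut_A(X)\times\Aut_A(Y)\to\Hom_A(X,Y)$ at $d$ is surjective; equivalently, the orbit $G\cdot d$ is open (Lemma~\ref{denseorbits}). Since $\Hom_A(X,Y)$ is an affine space, hence irreducible, any two open orbits are dense and therefore intersect. Applied to the differentials $d$ and $e$ of $C$ and $D$ (which live in the same $\Hom_A(X,Y)$ by Step~1), this gives $G\cdot d = G\cdot e$, i.e.\ $C\cong D$ in $\Cscr^b(\projC_A)$. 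No $\tau$-rigid pair or Krull--Schmidt argument is needed.
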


Expanding out $[C]$ in terms of the basis $[P_0], \ldots, [P_l]$, we get
	$$
	[C]=\sum_{i=1}^{l} g_i^{C} [P_i].
	$$
The tuple $g^{C}=(g_1^{C},\ldots,g_l^{C})$ is known as the $g$-vector of $C$. So Theorem~\ref{g-vector} says that two-term rigid complexes are uniquely determined by their $g$-vectors.

\subsection{Support $\tau$-tilting modules}

\begin{definition}
A module~$M \in \modC_A$ is  called
	\begin{enumerate}
	\item \emph{$\tau$-rigid} if~$\Hom_A(M,\tau M)=0$,
	\item \emph{$\tau$-tilting} if it is $\tau$-rigid and $|M|=|A|$,
	\item \emph{support $\tau$-tilting} if there is an idempotent $e \in A$ such that $M$ is a $\tau$-tilting $A/(e)$-module.
	\end{enumerate}
\end{definition}

We will often think of a support $\tau$-tilting module as a pair $(M,e\cdot A)$, and say that it is basic if both $M$ and $e\cdot A$ are basic. Also, direct sums are defined componentwise.

For basic support $\tau$-tilting modules, there is again a notion of mutation, and one can then similarly define a left mutation quiver $Q_{\tau}(A)$. Also, there is a partial order on this set giving rise to a Hasse quiver $H_{\tau}(A)$. For details, see~\cite[Section 2.4]{MR3187626}. These quivers are again the same (see \cite[Corollary 2.34]{MR3187626}) and isomorphic to $Q_2(A)$, as the following shows:

\begin{theorem}\cite[Theorem 3.2, Corollary 3.9]{MR3187626}
\label{bijection}
There are mutually inverse functions
$$
\{\text{ basic two-term silting complexes }\}\ \overset{f}{\underset{g}{\larger{\larger{\rightleftarrows}}}}\ \{\text{ basic support } \tau\text{-tilting modules }\}
$$
which are defined in the following way: 
	\begin{align}
	f(C) &= {\tt H}^0(C) \\
	g((M,R)) &= (P \oplus R \xrightarrow{(p \ 0)} Q),
	\end{align}
where $P \xrightarrow{p} Q \to M$ is a minimal projective presentation of $M$, and $R$ is the (uniquely determined, up to isomorphism) basic projective module such that $(M, R)$ is a support $\tau$-tilting pair. Moreover, this bijection gives an isomorphism of posets between the left mutation quivers $Q_2(A)$ and $Q_{\tau}(A)$.
\end{theorem}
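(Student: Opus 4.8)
The plan is to prove the bijection by explicit homological computation, the main input being the description of $\Hom$-spaces in $\Kscr^{b}(\projC_{A})$ between two-term complexes. For two-term complexes $C_{P}=(P_{1}\xrightarrow{p}P_{0})$ and $C_{Q}=(Q_{1}\xrightarrow{q}Q_{0})$ in degrees $-1,0$, writing out chain maps $C_{P}\to C_{Q}[1]$ modulo null-homotopies yields a natural isomorphism
$$
\Hom_{\Kscr^{b}(\projC_{A})}(C_{P},C_{Q}[1])\;\cong\;\coker\Big(\Hom_{A}(P_{1},Q_{1})\oplus\Hom_{A}(P_{0},Q_{0})\xrightarrow{(q\circ-,\,-\circ p)}\Hom_{A}(P_{1},Q_{0})\Big).
$$
When $p$ is a minimal projective presentation of $M=\coker p$, I would combine this with the transpose formula $\tau N=\dual\Tr N$ — using $N\otimes_{A}\Hom_{A}(P_{i},A)\cong\Hom_{A}(P_{i},N)$ to identify $\coker(\Hom_{A}(P_{0},N)\to\Hom_{A}(P_{1},N))\cong\dual\Hom_{A}(N,\tau M)$ — to obtain $\Hom_{\Kscr^{b}}(C_{P},C_{Q}[1])\cong\dual\Hom_{A}(\coker q,\tau M)$. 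A parallel, easier computation gives $\Hom_{\Kscr^{b}}(R[1],C_{Q}[1])\cong\Hom_{A}(R,\coker q)$, whereas $\Hom_{\Kscr^{b}}(C_{P},R[2])=\Hom_{\Kscr^{b}}(R[1],R[2])=0$ for degree reasons; here $R[j]$ denotes a projective $R$ placed in homological degree $-j$.

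Next I would record that a \emph{minimal} two-term complex $C=(P^{-1}\xrightarrow{d}P^{0})$ decomposes in $\Kscr^{b}(\projC_{A})$ as $C\cong P_{M}\oplus R[1]$, where $M={\tt H}^{0}(C)=\coker d$, $P_{M}$ is the complex of a minimal projective presentation of $M$, and $R$ is some projective: minimality forces $d\in\rad$, so $P^{0}\to M$ is a projective cover, and lifting $P^{-1}\twoheadrightarrow\ker(P^{0}\to M)$ through the projective cover of that kernel splits off exactly the summand $R$ of $P^{-1}$ on which $d$ vanishes. I would also note (using that the minimal presentation complex of an indecomposable module is indecomposable) that $|P_{M}|=|M|$ and that $P_{M}$ has no direct summand concentrated in degree $-1$. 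Given a basic two-term silting complex $C$, taken minimal so that $C\cong P_{M}\oplus R[1]$ with $R\cong eA$, rigidity $\Hom_{\Kscr^{b}}(C,C[1])=0$ forces $\Hom_{A}(M,\tau M)=0$ and $\Hom_{A}(eA,M)\cong Me=0$ by the formulas of the previous paragraph, and $Me=0$ says precisely that $M$ is an $A/(e)$-module. Invoking that a $\tau$-rigid $A$-module which is an $A/(e)$-module is $\tau$-rigid over $A/(e)$, and comparing $|C|=|A|$ with $|C|=|P_{M}|+|R|=|M|+|R|$, one gets $|M|=|A|-|R|=|A/(e)|$, so $f(C)=({\tt H}^{0}(C),eA)$ is a basic support $\tau$-tilting pair; hence $f$ is well defined.

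Conversely, let $(M,eA)$ be a basic support $\tau$-tilting pair, so $Me=0$ and $M$ is $\tau$-tilting over $A/(e)$, hence also $\tau$-rigid over $A$ by the converse of the implication used above. Then $g((M,eA))=P_{M}\oplus eA[1]$ is rigid by exactly the same $\Hom$-computations ($\Hom_{A}(M,\tau M)=0$ kills the $(P_{M},P_{M})$-component, $Me=0$ kills the $(eA[1],P_{M})$-component, the remaining components vanish automatically), and it has $|P_{M}|+|eA|=|M|+|eA|=|A|$ indecomposable summands, hence is silting because a two-term presilting complex with $|A|$ summands is silting. That $f$ and $g$ are mutually inverse is then bookkeeping: $g(f(C))=P_{M}\oplus R[1]=C$ by the decomposition above, while, writing $f(g((M,R)))=(M,R')$, the projective $R'$ is read off as the degree-$(-1)$-concentrated part of $P_{M}\oplus R[1]$, which equals $R[1]$ since $P_{M}$ has no degree-$(-1)$ summand, so $R'=R$.

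For the partial order and the mutation quivers, I would compute $\Hom_{\Kscr^{b}}(g((M,P)),g((N,Q))[1])$ component by component with the formulas of the first paragraph, obtaining $\dual\Hom_{A}(N,\tau M)\oplus\Hom_{A}(P,N)$; hence $g((N,Q))\le g((M,P))$ in the silting order of Theorem~\ref{order} if and only if $\Hom_{A}(N,\tau M)=0$ and $\Hom_{A}(P,N)=0$, which is the standard characterization of the partial order $(N,Q)\le(M,P)$ on support $\tau$-tilting pairs. So $f$ and $g$ are inverse isomorphisms of posets, which induces an isomorphism of Hasse quivers $H(A)\cong H_{\tau}(A)$; since on both sides the Hasse quiver coincides with the left mutation quiver (Theorem~\ref{compare} and \cite[Corollary 2.34]{MR3187626}), this gives $Q_{2}(A)\cong Q_{\tau}(A)$. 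I expect the two real difficulties to be (i) the change-of-rings comparison, that $\tau$-rigidity over $A$ and over $A/(e)$ agree once the module is an $A/(e)$-module, and (ii) the input that a two-term presilting complex with $|A|$ indecomposable summands already generates $\Kscr^{b}(\projC_{A})$, i.e. is silting; the rest is the explicit, if somewhat lengthy, chain-level bookkeeping indicated above.
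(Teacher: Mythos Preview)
The paper does not prove this theorem: it is quoted as background from Adachi--Iyama--Reiten \cite{MR3187626} in the preliminaries section, without proof. So there is no ``paper's own proof'' to compare against.

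That said, your proposal is essentially the argument given in \cite{MR3187626}, and it is correct in outline. The cokernel description of $\Hom_{\Kscr^b(\projC_A)}(C_P,C_Q[1])$ combined with the Auslander--Reiten duality formula for the transpose is exactly how AIR establish the dictionary between presilting and $\tau$-rigidity (their Lemma~3.4), and the decomposition of a minimal two-term complex as $P_M\oplus R[1]$ is their Lemma~3.5. The two points you flag as the real difficulties are indeed the substantive inputs: the comparison of $\tau$-rigidity over $A$ and over $A/(e)$ for modules annihilated by $e$ is their Lemma~2.1, and the fact that a two-term presilting complex with $|A|$ indecomposable summands is already silting is their Proposition~3.3 (via Bongartz-type completion). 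Your treatment of the poset structure, computing $\Hom_{\Kscr^b}(g((M,P)),g((N,Q))[1])\cong \dual\Hom_A(N,\tau M)\oplus\Hom_A(P,N)$ and then invoking the coincidence of Hasse and mutation quivers on both sides, is also the route taken there. One small point of care: in your formula $\Hom_{\Kscr^b}(C_P,C_Q[1])\cong \dual\Hom_A(\coker q,\tau M)$ you are implicitly also using minimality of $q$ (or at least that $C_Q$ has no contractible summand), since otherwise $\coker q$ does not determine $C_Q$; this is harmless here because you immediately reduce to minimal complexes, but it is worth stating explicitly.
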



\section{Geometry of two-term complexes of projective modules}
\label{geometry}
In this section we construct an exact sequence which will be useful for proving our first main theorem. It also serves to provide an elementary proof of Theorem~\ref{g-vector}. 

For two fixed projective $A$-modules $P$ and $Q$, $\Hom_A(P,Q)$ can be considered as algebraic variety, isomorphic to affine space. The connected algebraic group $G=\Aut_A P \times \Aut_A Q$ acts on $\Hom_A(P,Q)$, in such a way that there is a bijection between the set of isomorphism classes of two-term complexes in $\Cscr^b(\projC_A)$ with terms $P$ and $Q$, and the set of orbits of $G$ in $\Hom_A(P,Q)$. 

When we consider $\alpha \in \Hom_A(P,Q)$ as a complex in $\Cscr^b(\projC_A)$, we will denote it as $C(\alpha)$. The orbit of $\alpha \in \Hom_A(P,Q)$ will be denoted by $G \cdot \alpha$ and its stabilizer by $G_{\alpha}$. The following proposition should be well known, but we do not know of a reference. 
	
	\begin{proposition}
	\label{exact}
	For $\alpha \in \Hom_A(P,Q)$ and $\beta \in \Hom_A(R,S)$ ($P,Q,R,S$ projective $A$-modules), there is an exact sequence 
		$$
		\begin{tikzpicture}[baseline= (a).base]
		\node[scale=.8] (a) at (0,0){
		\begin{tikzcd}
		0 \to \Hom_{\Cscr^b(\projC_A)}(C(\alpha),C(\beta)) \to \Hom_A(P,R) \times \Hom_A(Q,S) \xrightarrow{f_{\alpha,\beta}} \Hom_A(P,S) \xrightarrow{g} \Hom_{\Kscr^b(\projC_A)}(C(\alpha),C(\beta)[1]) \to 0
		\end{tikzcd}};
		\end{tikzpicture}
		$$
	where 
	\begin{equation}
		f_{\alpha,\beta} (X,Y) = Y \circ\alpha - \beta\circ X 
	\end{equation}
	and $g$ is just the natural map (we have $C(\alpha)^0 = P$ and $C(\beta)[1]^0=C(\beta)^{-1}=S$, so we may view an element of $\Hom_A(P,S)$ as a map of chain complexes).
	
	Now, in the situation $\alpha=\beta$, let again $G=\Aut_A(P)\times \Aut_A(Q)$ and 
	\begin{equation}
		\phi_\alpha: G \longrightarrow \Hom_A(P, Q):\ (g_1, g_2) \mapsto g_2\circ \alpha\circ g_1^{-1} 
	\end{equation}
	Then $d(\phi_\alpha)_{e}=f_{\alpha,\alpha}$ (where $e$ denotes the unit element of $G$),  
	and we get 
	\begin{equation}
		\im d (\phi_{\alpha})_e=T_{\alpha}(G \cdot \alpha)
	\end{equation}
	\end{proposition}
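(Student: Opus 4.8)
The statement has three parts: constructing the exact sequence, identifying $d(\phi_\alpha)_e$ with $f_{\alpha,\alpha}$, and computing the image as the tangent space to the orbit. I would handle them in that order.

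\emph{The exact sequence.} The plan is to write down the chain-complex/homotopy computation directly. A morphism of chain complexes $C(\alpha) \to C(\beta)$ consists of a pair $(X,Y) \in \Hom_A(P,R) \times \Hom_A(Q,S)$ making the evident square commute, i.e.\ satisfying $Y\circ\alpha = \beta\circ X$; hence the kernel of $f_{\alpha,\beta}$ is exactly $\Hom_{\Cscr^b(\projC_A)}(C(\alpha),C(\beta))$, which gives exactness at the first two spots. For the last two spots, note $\Hom_{\Cscr^b}(C(\alpha),C(\beta)[1])$ is all pairs $(X',Y')$ with $X' \colon P \to S$ and $Y' \colon Q \to (C(\beta)[1])^1 = 0$, so a cochain map into the shift is just an element of $\Hom_A(P,S)$, and $g$ sends $\alpha' \in \Hom_A(P,S)$ to its class. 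The null-homotopic maps $C(\alpha) \to C(\beta)[1]$ are precisely those of the form $\beta\circ h_0 - (\text{something})$; chasing the single possible homotopy component $h \colon P \to R$ (resp.\ $Q\to S$), one sees a cochain map $P \to S$ is null-homotopic iff it lies in the image of $f_{\alpha,\beta}$, giving exactness at $\Hom_A(P,S)$. Surjectivity of $g$ onto $\Hom_{\Kscr^b}(C(\alpha),C(\beta)[1])$ is clear since every degree-one morphism in the homotopy category lifts to a cochain map of these two-term complexes (there are no higher Hom obstructions for two-term complexes of projectives). This is the part to be most careful with, as it is easy to mislabel which homotopy component is free; but it is a routine diagram chase.

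\emph{The differential of $\phi_\alpha$.} Here I would just differentiate. Writing $g_1 = e + t X_1 + O(t^2)$ and $g_2 = e + t X_2 + O(t^2)$ in $\Aut_A(P) \times \Aut_A(Q)$ (with $X_1 \in \End_A(P)$, $X_2 \in \End_A(Q)$, which is legitimate since $\Aut$ is a dense open subvariety of $\End$ and the tangent space at $e$ is all of $\End$), we get $g_1^{-1} = e - tX_1 + O(t^2)$, hence
\[
\phi_\alpha(g_1,g_2) = (e + tX_2)\circ\alpha\circ(e - tX_1) + O(t^2) = \alpha + t(X_2\circ\alpha - \alpha\circ X_1) + O(t^2).
\]
Thus $d(\phi_\alpha)_e(X_1,X_2) = X_2\circ\alpha - \alpha\circ X_1 = f_{\alpha,\alpha}(X_1,X_2)$, using that $f_{\alpha,\beta}(X,Y) = Y\circ\alpha - \beta\circ X$ with $\beta = \alpha$, $R=P$, $S=Q$.

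\emph{Image equals tangent space to the orbit.} The map $\phi_\alpha \colon G \to \Hom_A(P,Q)$ has image $G\cdot\alpha$ by definition of the $G$-action. For an algebraic group $G$ acting on an affine variety $V$ in characteristic that may be positive, the orbit map $\phi_\alpha$ is a dominant morphism onto the locally closed subvariety $G\cdot\alpha$ (with reduced structure), and its differential at $e$ surjects onto the tangent space $T_\alpha(G\cdot\alpha)$: indeed $\phi_\alpha$ factors as $G \twoheadrightarrow G/G_\alpha \xrightarrow{\sim} G\cdot\alpha$ (the second map a bijective morphism, an isomorphism of varieties in char $0$ and at worst purely inseparable otherwise), and since $G \to G/G_\alpha$ is a smooth surjective morphism it is submersive on tangent spaces, while the orbit $G\cdot\alpha$ is smooth (it is homogeneous under $G$), so $G/G_\alpha \to G\cdot\alpha$ is an isomorphism on tangent spaces. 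Hence $\im d(\phi_\alpha)_e = T_\alpha(G\cdot\alpha)$ as claimed. I expect this last point to be the only place where one must be slightly careful about the ground field, and I would simply cite the standard fact (e.g.\ from Borel or Humphreys on linear algebraic groups) that the image of the differential of an orbit map is the tangent space of the orbit, so no genuine obstacle arises.
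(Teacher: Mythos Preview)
Your treatment of the exact sequence and of the differential computation is essentially what the paper does (just spelled out in more detail; the paper says ``clear'' for the differential and only checks exactness at $\Hom_A(P,S)$).

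For the third part, however, your argument differs from the paper's and has a gap. You argue that $G/G_\alpha \to G\cdot\alpha$ is a bijective morphism between smooth varieties and hence induces an isomorphism on tangent spaces. This inference is false in positive characteristic: the Frobenius morphism $\mathbb{A}^1 \to \mathbb{A}^1$ is a bijection between smooth varieties whose differential is zero. The correct hypothesis making your argument work is that the stabilizer $G_\alpha$ is \emph{smooth} as a group scheme (equivalently, the orbit map is separable). This happens to be true here, since $G_\alpha = \Aut_{\Cscr^b(\projC_A)}(C(\alpha))$ is the unit group of the finite-dimensional algebra $\End_{\Cscr^b(\projC_A)}(C(\alpha))$, hence an open subvariety of an affine space; but you do not check this, and your justification (``the orbit is smooth, so\ldots'') is not the right one. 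Likewise, the ``standard fact'' you propose to cite from Borel or Humphreys is not stated there without a separability or smoothness hypothesis.

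The paper avoids this issue entirely by a dimension count: one always has $\im d(\phi_\alpha)_e \subseteq T_\alpha(G\cdot\alpha)$, and the paper shows both sides have the same dimension. Indeed, $\dim T_\alpha(G\cdot\alpha) = \dim G\cdot\alpha = \dim G - \dim G_\alpha$ (orbits are smooth; orbit--stabilizer), while $\dim \im f_{\alpha,\alpha} = \dim(\End_A P \times \End_A Q) - \dim \End_{\Cscr^b(\projC_A)}(C(\alpha))$ by the exact sequence just proved. Since $G$ is open in $\End_A P \times \End_A Q$ and $G_\alpha = \Aut_{\Cscr^b(\projC_A)}(C(\alpha))$ is open in $\End_{\Cscr^b(\projC_A)}(C(\alpha))$, these agree. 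This approach is both shorter and characteristic-free, and it neatly reuses the exact sequence you just established.
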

	\begin{proof}
	The only place where exactness is not immediately clear is at the $\Hom_A(P,S)$ term. The kernel of $g$ consists of all $\gamma \in \Hom_A(P,S)$ such that $g(\gamma)$ is homotopic to zero. This is exactly the image of $f_{\alpha,\beta}$ (the negative sign in the definition of $f_{\alpha,\beta}$ does not affect the image).
	
	That $f_{\alpha,\alpha}$ can be identified with the differential at the identity of the orbit map is clear. 
	
	Let us consider the last statement. We should first remark that the equality $G_\alpha=\Aut_{\mathcal C^b(\projC_A)}(C(\alpha))$ follows immediately from the definition, and hence $\dim G_\alpha = \dim \End_{\mathcal C^b(\projC_A)}(C(\alpha))$. Since orbits are smooth, and by using~\cite[Theorem 4.3]{MR0396773}, we find
		\begin{align}
		\dim T_{\alpha}(G \cdot \alpha) &=\dim G \cdot \alpha \\
		&=\dim G - \dim G_{\alpha} \\
		&= \dim \End_A(P) \times \End_A(Q) - \dim \End_{\Cscr^b(\projC_A)}(C(\alpha)) \\
		&= \dim \im f_{\alpha},
		\end{align}
	where we also used exactness of~\eqref{exact} in the last equality. By the identification of $f_{\alpha,\alpha}$ and $d(\phi_{\alpha})_e$, we are done.
	\end{proof}
	
	
Using this proposition we can easily reprove the following well-known results by Jensen-Su-Zimmermann in the special case of two-term complexes. 
	
	\begin{lemma}\cite[Lemma 4.5]{MR2133687}
	\label{denseorbits}
	A two-term complex $C(\alpha) \in \Kscr^b(\projC_A)$ with terms $P$ and $Q$ is rigid if and only if the orbit $G \cdot \alpha$ is open (and thus dense) in $\Hom_A(P,Q)$.
	\end{lemma}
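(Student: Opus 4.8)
The plan is to use the exact sequence from Proposition~\ref{exact} in the case $\alpha = \beta$ together with the last statement of that proposition relating the image of $f_{\alpha,\alpha}$ to the tangent space of the orbit. Recall that $C(\alpha)$ being rigid means precisely that $\Hom_{\Kscr^b(\projC_A)}(C(\alpha),C(\alpha)[1]) = 0$. From the four-term exact sequence, this last $\Hom$-space is the cokernel of $f_{\alpha,\alpha}\colon \Hom_A(P,R)\times\Hom_A(Q,S)\to\Hom_A(P,S)$, which with $P=R$, $Q=S$ becomes $\End_A(P)\times\End_A(Q)\to\Hom_A(P,Q)$. Hence $C(\alpha)$ is rigid if and only if $f_{\alpha,\alpha}$ is surjective, i.e.\ $\operatorname{im} f_{\alpha,\alpha} = \Hom_A(P,Q)$.

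Next I would translate this into a statement about orbits. By the last part of Proposition~\ref{exact}, $\operatorname{im} d(\phi_\alpha)_e = \operatorname{im} f_{\alpha,\alpha} = T_\alpha(G\cdot\alpha)$. So $C(\alpha)$ is rigid if and only if the tangent space $T_\alpha(G\cdot\alpha)$ equals all of $\Hom_A(P,Q)$, which is the tangent space of the whole affine space at the point $\alpha$. Since orbits are locally closed (and in particular smooth) subvarieties of the ambient variety $\Hom_A(P,Q)$, a single orbit $G\cdot\alpha$ is open in $\Hom_A(P,Q)$ if and only if $\dim G\cdot\alpha = \dim \Hom_A(P,Q)$, equivalently if and only if $T_\alpha(G\cdot\alpha) = T_\alpha(\Hom_A(P,Q)) = \Hom_A(P,Q)$. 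Openness then gives density automatically because $G$ is connected, so $\Hom_A(P,Q)$ is irreducible and any nonempty open subset is dense. Putting the two equivalences together yields the claim.

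The step that requires the most care is justifying that ``$T_\alpha(G\cdot\alpha)$ full $\iff$ $G\cdot\alpha$ open''. One direction is easy: if the orbit is open then it is a smooth open subvariety of full dimension, so its tangent space at $\alpha$ is everything. For the converse one uses that the orbit map $\phi_\alpha$ has constant rank (by homogeneity under the $G$-action), so $\dim G\cdot\alpha = \operatorname{rk} d(\phi_\alpha)_e = \dim \operatorname{im} f_{\alpha,\alpha}$; if this equals $\dim\Hom_A(P,Q)$, then the orbit is a locally closed subvariety of the same dimension as the irreducible ambient space, hence dense, and being locally closed and dense it is open. This is exactly the dimension computation already carried out in the proof of Proposition~\ref{exact}, so in practice the argument is short, but one should be explicit that ``locally closed of full dimension in an irreducible variety'' forces openness. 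Everything else is a direct unwinding of the exact sequence.
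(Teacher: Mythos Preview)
Your proof is correct and follows essentially the same route as the paper: both use the exact sequence of Proposition~\ref{exact} (with $\alpha=\beta$) to identify $\Hom_{\Kscr^b(\projC_A)}(C(\alpha),C(\alpha)[1])$ with the quotient $\Hom_A(P,Q)/T_\alpha(G\cdot\alpha)$, and then invoke the standard fact that a locally closed smooth orbit is open precisely when its tangent space at a point has full dimension. One small slip: the irreducibility of $\Hom_A(P,Q)$ (and hence density of any nonempty open subset) has nothing to do with $G$ being connected---it holds simply because $\Hom_A(P,Q)$ is an affine space, which is how the paper phrases it.
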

	\begin{proof}
	Orbits are always locally closed and smooth, so they are open exactly when there is a point  $x \in G \cdot \alpha$ such that $\dim T_x(G \cdot \alpha)=\dim T_x \Hom_A(P,Q)$. By Proposition~\ref{exact}, there is an isomorphism
		$$
		T_{\alpha} \Hom_A(P,Q)/T_{\alpha} (G \cdot \alpha) \xrightarrow{\cong} \Hom_{\Kscr^b(\projC_A)}(C(\alpha),C(\alpha)[1]),
		$$
	and hence the lemma follows. Note that the denseness of $G \cdot \alpha$ follows from the fact that $\Hom_A(P,Q)$ is an affine space, and hence irreducible.
	\end{proof}

We can use this lemma to obtain an alternative proof of Theorem~\ref{g-vector}. Note that a slightly weaker form of this theorem was also obtained in~\cite[Corollary 4.8]{MR2133687}.

	\begin{theorem}
	\label{unique}
	Two-term rigid complexes in $\Kscr^b(\projC_A)$ are determined up to isomorphism by their $g$-vectors.
	\end{theorem}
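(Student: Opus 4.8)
The plan is to deduce Theorem~\ref{unique} from Lemma~\ref{denseorbits} together with a counting argument on the $g$-vector. Fix two-term rigid complexes $C(\alpha)$ and $C(\beta)$, with $\alpha \in \Hom_A(P,Q)$ and $\beta \in \Hom_A(R,S)$, and suppose they have the same class in $K_0(\projC_A)$, i.e. $[P]-[Q]=[R]-[S]$. First I would reduce to the case where the presentations are minimal: replacing $C(\alpha)$ by its minimal form only removes a contractible direct summand $P'\xrightarrow{\id}P'$, which does not change the isomorphism class in $\Kscr^b(\projC_A)$ nor the $g$-vector, so we may assume both complexes have no such summand. Then the task is to show $P\cong R$ and $Q\cong S$ as $A$-modules, after which we can work inside a single affine space $\Hom_A(P,Q)$.

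The key step is the passage from the equality of $g$-vectors to the equality $P\cong R$, $Q\cong S$. For a minimal two-term complex, $Q$ is (up to isomorphism) the projective cover of $\mathrm{coker}(\alpha)$, and the fact that $C(\alpha)$ is rigid forces the map $\alpha$ to have image inside $\rad Q$ when restricted appropriately; more concretely, minimality means that no indecomposable summand of $P$ maps isomorphically onto a summand of $Q$. I would argue that $P$ and $Q$ therefore have no common indecomposable direct summand: if $P_i$ were a summand of both, one could split off a trivial complex, contradicting minimality. Hence in the expression $[P]-[Q]=\sum_i g_i^{C(\alpha)}[P_i]$ the coefficients are unambiguous — $P$ accounts for the positive part of the $g$-vector and $Q$ for the negative part — and the same holds for $C(\beta)$. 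Since the two $g$-vectors agree, we get $[P]=[R]$ and $[Q]=[S]$ in $K_0(\projC_A)$, and as the classes $[P_i]$ form a basis and projectives are determined by their class, $P\cong R$ and $Q\cong S$.

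Now both complexes live in the same space $V=\Hom_A(P,Q)$, acted on by $G=\Aut_A(P)\times\Aut_A(Q)$, and two such complexes are isomorphic in $\Cscr^b(\projC_A)$ precisely when $\alpha$ and $\beta$ lie in the same $G$-orbit. By Lemma~\ref{denseorbits}, rigidity of $C(\alpha)$ and of $C(\beta)$ means that $G\cdot\alpha$ and $G\cdot\beta$ are both open dense in the irreducible variety $V$. Two dense open subsets of an irreducible variety must intersect, so $G\cdot\alpha\cap G\cdot\beta\neq\varnothing$, forcing $G\cdot\alpha=G\cdot\beta$ and hence $\alpha\cong\beta$, i.e. $C(\alpha)\cong C(\beta)$ in $\Cscr^b(\projC_A)$, which a fortiori gives an isomorphism in $\Kscr^b(\projC_A)$.

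I expect the main obstacle to be the bookkeeping in the second paragraph: making rigorous that a minimal two-term complex has $P$ and $Q$ with disjoint sets of indecomposable summands, so that the $g$-vector genuinely recovers $[P]$ and $[Q]$ separately rather than just their difference. The cleanest way is probably to note that if $P_i$ occurs in both $P$ and $Q$, then $\Hom_{\Kscr^b(\projC_A)}(C(\alpha),C(\alpha)[1])$ or a direct-summand argument produces a contradiction with rigidity together with minimality; alternatively one invokes the standard fact that a two-term complex decomposes uniquely as a minimal complex plus trivial summands and that rigidity is inherited. Once that point is pinned down, the rest is the short topological argument using irreducibility of affine space, exactly as in the proof of Lemma~\ref{denseorbits}.
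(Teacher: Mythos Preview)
Your overall strategy matches the paper's exactly: reduce to a minimal representative, show that in such a complex $P$ and $Q$ share no indecomposable summand so that the $g$-vector recovers $P$ and $Q$ separately, and then invoke Lemma~\ref{denseorbits} and irreducibility of $\Hom_A(P,Q)$ to force the two dense open orbits to coincide.

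The gap is precisely where you anticipated it, and your proposed fix does not work. The sentence ``if $P_i$ were a summand of both, one could split off a trivial complex, contradicting minimality'' is false as stated: minimality only tells you that $\im d \subseteq \rad(Q)$, and nothing prevents the same $P_i$ from occurring in both $P$ and $Q$ while the relevant component of $d$ is a radical map (or even zero). So minimality alone cannot separate the positive and negative parts of the $g$-vector. The paper closes this gap with a short argument that genuinely uses rigidity: viewing an arbitrary $f\in\Hom_A(P,Q)$ as a chain map $C(\alpha)\to C(\alpha)[1]$, rigidity forces $f$ to be null-homotopic, i.e.\ $f=h_Q\circ d + d\circ h_P$ for some $h_P\in\End_A(P)$ and $h_Q\in\End_A(Q)$. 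Since $\im d\subseteq\rad(Q)$ by minimality, this gives $\im f\subseteq\rad(Q)$ for \emph{every} $f:P\to Q$. In particular no composite $P\twoheadrightarrow P_i\hookrightarrow Q$ can be a split injection, so $P$ and $Q$ have no common indecomposable summand. This is the ``rigidity together with minimality'' route you gestured at; the key point you were missing is that rigidity controls not just the differential $d$ but the whole of $\Hom_A(P,Q)$.
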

	\begin{proof}
	We first show that any two-term rigid complex $C$ is isomorphic in $\Kscr^b(\projC_A)$ to a complex with terms having no direct summands in common. So assume $C$ can be represented by a complex
	$$
	0 \to P \xrightarrow{d} Q \to 0,
	$$
which is minimal with respect to the number of direct summands of $P$ and $Q$. This minimality ensures that $\im d \subseteq \rad(Q)$. To now prove that $P$ and $Q$ have no summands in common, it suffices to show that the image of any morphism $f:P \to Q$ is contained in $\rad(Q)$. By rigidity, there exist $h_P \in \End_A(P)$ and $h_Q \in \End_A(Q)$ such that 
	$$
	f=h_Q \circ d + d \circ h_P
	$$
But since $\im d \subseteq \rad(Q)$, also $\im f \subseteq \rad(Q)$.

	Now let $C(\alpha)$ and $C(\beta)$ denote two-term rigid complexes, both with terms $P$ and $Q$. Then by Lemma~\ref{denseorbits}, the orbits $G \cdot \alpha$ and $G \cdot \beta$ are dense in $\Hom_A(P,Q)$, so they intersect and we get an isomorphism $C(\alpha) \cong C(\beta)$ in $\Cscr^b(\projC_A)$. In particular, we find that two-term rigid complexes are uniquely determined by their terms.  
	
	Since we know by the first part of the proof that $P$ and $Q$ do not have any summands in common,  the class $[C(\alpha)] \in K_0(\projC_A)$ already suffices to determine $C(\alpha)$ up to isomorphism, which is exactly what we needed to prove. 
	\end{proof}

\section{Quotients by a centrally generated ideal}
\label{centraltheorem}
Suppose $z \in Z(A)\cap \rad(A)$ is an element such that $z^2=0$ and consider the ideal $I=(z)$ of $A$. By $\bar{P_1}, \ldots, \bar{P_l}$ we denote the projective indecomposable $\bar A=A/I$ modules. Note that since $z \in \rad(A)$, the number of projectives is the same. 

%

We know that $\Hom_A(P_i, P_j)$ is isomorphic to $e_jAe_i$, and under this isomorphism the kernel of the natural epimorphism 
\begin{equation}
\Hom_A(P_i, P_j)\twoheadrightarrow \Hom_{A/I}(\bar P_i, \bar P_j):\ \alpha\mapsto \bar\alpha
\end{equation} 
corresponds to $e_jIe_i$.
Since $I=(z)$ we therefore have for any $\alpha \in \Hom_A(P_i, P_j)$
\begin{equation}\label{decomp}
	\bar\alpha=0 \iff \alpha=z\cdot \alpha_0 
		\textrm{ for some $\alpha_0\in\Hom_A(P_i, P_j)$}
\end{equation}


The following is our main reduction theorem, which, despite its simple proof, will turn out to be very powerful in the remainder of this article.

	\begin{theorem}
	\label{maintheorem}
	For an ideal $I \subseteq (Z(A)\cap \rad(A))\cdot A$ of $A$, the $g$-vectors of two-term rigid (respectively silting) complexes for $A$ coincide with the ones for $A/I$, as do the mutation quivers. 
	\end{theorem}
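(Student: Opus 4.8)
The plan is to reduce immediately to the case $I = (z)$ with $z \in Z(A)\cap\rad(A)$ and $z^2 = 0$, and then to show that the natural reduction map $\alpha \mapsto \bar\alpha$ on morphisms of projectives induces a bijection on two-term rigid complexes preserving $g$-vectors, the partial order, and mutation. The key input is Proposition~\ref{exact}: the four-term exact sequence computes $\Hom_{\Kscr^b(\projC_A)}(C(\alpha),C(\beta)[1])$ as a cokernel of $f_{\alpha,\beta}$, and rigidity is detected (Lemma~\ref{denseorbits}) by whether the orbit is dense, equivalently whether $f_{\alpha,\alpha}$ is surjective.

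**Reduction to $z^2=0$.** First I would argue that a general ideal $I \subseteq (Z(A)\cap\rad(A))\cdot A$ can be reached by a finite chain of quotients, each by a principal ideal $(z)$ with $z$ central, nilpotent, and with square zero. Indeed, if $z_1,\dots,z_m$ generate $I$ and each lies in $Z(A)\cap\rad(A)$, one can filter $I$ by the ideals $I_j = (z_1,\dots,z_j)$; within each step $A/I_{j-1} \to A/I_j$ the kernel is generated by the image of $z_j$, which is still central and nilpotent, and one further refines using powers of $z_j$ so that at each stage the generator squares to zero in the relevant quotient. Since the statement of the theorem is manifestly transitive along such a chain (the projectives stay in bijection because $z\in\rad$, and $g$-vectors, the Hasse quiver and the mutation quiver compose), it suffices to treat one step with $z^2=0$, which is the setup already fixed in Section~\ref{centraltheorem}.

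**The core bijection.** Fix such a $z$ and $\bar A = A/(z)$. For projectives $P=\bigoplus P_i^{a_i}$, $Q=\bigoplus P_j^{b_j}$ with reductions $\bar P,\bar Q$, reduction gives a surjection $\Hom_A(P,Q)\twoheadrightarrow\Hom_{\bar A}(\bar P,\bar Q)$ whose kernel is $z\cdot\Hom_A(P,Q)$ by~\eqref{decomp}. The plan is: (i) a two-term complex $C(\alpha)$ over $A$ with terms $P,Q$ is rigid iff $C(\bar\alpha)$ is rigid over $\bar A$; (ii) every rigid $\bar A$-complex lifts to a rigid $A$-complex with the same terms, uniquely up to isomorphism; (iii) since by Theorem~\ref{unique} a rigid complex may be taken with $P,Q$ sharing no summands, the $g$-vector (which only records $[Q]-[P] = \sum(b_j - a_j)[P_j] = \sum(b_j-a_j)[\bar P_j]$) is preserved. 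For (i) and (ii), the crucial observation is that the map $f_{\alpha,\beta}$ is compatible with reduction: since $z$ is central, $f_{z\alpha_0,z\beta_0}(X,Y) = Y z\alpha_0 - z\beta_0 X = z(Y\alpha_0 - \beta_0 X)$, so $f$ restricts to the kernels $z\Hom_A(P,R)\times z\Hom_A(Q,S)\to z\Hom_A(P,S)$ and the quotient of $f_{\alpha,\beta}$ by these kernels is exactly $f_{\bar\alpha,\bar\beta}$. Multiplication by $z$ is itself a surjection from the unreduced Hom-spaces onto these kernel subspaces (with kernel again the $z$-multiples, since $z^2=0$), so we get a commuting diagram of the form ``$f_{\alpha,\beta}$ is an extension of $f_{\bar\alpha,\bar\beta}$ by a shifted copy of $f_{\alpha_0,\beta_0}$''; a snake-lemma / rank count then shows $\coker f_{\alpha,\alpha}$ surjects onto $\coker f_{\bar\alpha,\bar\alpha}$ and, conversely, that surjectivity of $f_{\bar\alpha,\bar\alpha}$ together with surjectivity of multiplication-by-$z$ forces surjectivity of $f_{\alpha,\alpha}$. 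By Lemma~\ref{denseorbits} this is precisely the equivalence of rigidity in (i). For (ii), density of the orbit of any lift of a rigid $\bar\alpha$ in the affine space $\Hom_A(P,Q)$, together with the fact that orbits meeting a fibre of reduction are permuted transitively by the (surjective-on-reductions) group $G$, gives existence and uniqueness of the rigid lift.

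**Order and mutation.** Having the $g$-vector-preserving bijection $C(\alpha)\leftrightarrow C(\bar\alpha)$ on two-term rigid complexes, I would check it restricts to silting complexes: a two-term rigid complex is silting iff it has $|A|=|\bar A|$ summands (and both are basic under the shared-terms normalisation), a condition visible on $g$-vectors via Theorem~\ref{g-vector}. For the Hasse quiver, $D\le C$ means $\Hom_{\Kscr^b(\projC_A)}(C,D[1])=0$, i.e. $f_{\gamma,\delta}$ surjective for the relevant $\gamma,\delta$; the same kernel-and-cokernel analysis as above shows this holds over $A$ iff over $\bar A$, so $H(A)\cong H(\bar A)$, and by Theorem~\ref{compare} and Theorem~\ref{bijection} the mutation quivers $Q_2(A)$ and $Q_2(\bar A)$ agree.

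**Main obstacle.** I expect the technical heart to be step (i)--(ii): making the ``$f_{\alpha,\beta}$ is an extension of $f_{\bar\alpha,\bar\beta}$'' picture precise and deducing from it the equivalence of \emph{surjectivity} of $f_{\alpha,\alpha}$ and $f_{\bar\alpha,\bar\alpha}$. The subtlety is that $z\cdot\alpha_0$ depends on the choice of $\alpha_0$ only modulo $z$-multiples, so one is really working with the filtration of $\Hom$-spaces by $z$-powers (here just a two-step filtration), and one must be careful that the relevant rank counts are uniform in $\alpha$ — which ultimately comes down to $z$ being central, so that left and right multiplication by $z$ interact correctly with composition. Everything else (the chain reduction, the $g$-vector bookkeeping, transporting the partial order) is formal once this is in place.
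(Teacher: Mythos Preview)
Your approach is essentially the paper's: reduce to $I=(z)$ with $z^2=0$, use the four-term exact sequence of Proposition~\ref{exact} to show that $f_{\alpha,\beta}$ is surjective if and only if $f_{\bar\alpha,\bar\beta}$ is, and then invoke Theorem~\ref{g-vector} and Theorem~\ref{compare}. The paper does the key step slightly more directly and uniformly: rather than first proving ``rigid $\Leftrightarrow$ rigid'' and later redoing the analysis for the order, it proves in one shot that $\Hom_{\Kscr^b(\projC_A)}(C(\alpha),C(\beta)[1])=0$ if and only if the same holds over $\bar A$, for \emph{arbitrary} $\alpha,\beta$, via an explicit two-step lift (write $\gamma=f_{\alpha,\beta}(X,Y)+z\gamma'$, then $\gamma'=f_{\alpha,\beta}(X',Y')+z\gamma''$, and use $z^2=0$ and centrality to get $\gamma=f_{\alpha,\beta}(X+zX',Y+zY')$). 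Your snake-lemma packaging is equivalent to this.

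Two small corrections. First, your computation ``$f_{z\alpha_0,z\beta_0}(X,Y)=\ldots$'' is not what you want; the relevant identity is $f_{\alpha,\beta}(zX,zY)=z\,f_{\alpha,\beta}(X,Y)$, which is what shows $f_{\alpha,\beta}$ preserves the $z$-filtration. Second, the parenthetical claim that multiplication by $z$ has kernel exactly the $z$-multiples is false in general (the kernel is $e_j\Ann(z)e_i$, which can be strictly larger than $e_j(z)e_i$); fortunately you never use this, since surjectivity of $f'$ on the $z$-layer only needs that any $z\gamma'$ is hit, and for that $z^2=0$ together with surjectivity of $f_{\bar\alpha,\bar\beta}$ suffices.
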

	\begin{proof}
	It suffices to consider $I=(z)$ a principal ideal, with $z \in Z(A)$ such that $z^2=0$. From Proposition~\ref{exact}, we know that for all $\alpha \in \Hom_A(P,Q)$, and $\beta \in \Hom_A(R,S)$ ($P,Q,R,S$ projective $A$-modules) there is a commutative diagram with exact rows:
		$$
		\begin{tikzpicture}[baseline= (a).base]
		\node[scale=.7] (a) at (0,0){
		\begin{tikzcd}
		0 \rar &\Hom_{\Cscr^b(\projC_A)}(C(\alpha),C(\beta)) \rar \dar[rightarrow] & \Hom_A(P,R) \times \Hom_A(Q,S) \rar{f_{\alpha,\beta}} \dar[twoheadrightarrow]{\phi} & \Hom_A(P,S) \rar{g} \dar[twoheadrightarrow]{\psi} & \Hom_{\Kscr^b(\projC_A)}(C(\alpha),C(\beta)[1]) \dar \rar & 0 \\
		0 \rar & \Hom_{\Cscr^b(\projC_{\bar{A}})}(C(\bar{\alpha}),C(\bar{\beta})) \rar & \Hom_{\bar{A}}(\bar{P},\bar{R}) \times \Hom_{\bar{A}}(\bar{Q},\bar{S}) \rar{f_{\bar{\alpha},\bar{\beta}}} & \Hom_{\bar{A}}(\bar{P},\bar{S}) \rar{\bar{g}} & \Hom_{\Kscr^b(\projC_{\bar{A}})}(C(\bar{\alpha}),C(\bar{\beta})[1]) \rar & 0 
		\end{tikzcd}};
		\end{tikzpicture}
		$$ 
	Since $\psi$ is surjective, by commutativity of the rightmost square, so is the rightmost vertical arrow. This ensures that if $\Hom_{\Kscr^b(\projC_A)}(C(\alpha),C(\beta)[1])=0$, also $\Hom_{\Kscr^b(\projC_{\bar{A}})}(C(\bar{\alpha}),C(\bar{\beta})[1])=0$.
	
	The other way round, if $\Hom_{\Kscr^b(\projC_{\bar{A}})}(C(\bar{\alpha}),C(\bar{\beta})[1])=0$, we claim that also $\Hom_{\Kscr^b(\projC_{A})}(C(\alpha),C(\beta)[1])=0$. From the exact sequence, we see that $f_{\bar{\alpha},\bar{\beta}}$ is surjective, and it suffices to prove that $f_{\alpha,\beta}$ is also surjective. Let $\gamma \in \Hom_A(P,S)$ be arbitrary, then there exists an element $(X,Y) \in \Hom_A(P,R) \times \Hom_A(Q,S)$ such that 
		\begin{align}
		\psi(\gamma) &= (f_{\bar{\alpha},\bar{\beta}} \circ \phi)(X,Y) \\
		&= (\psi \circ f_{\alpha,\beta})(X,Y),
		\end{align}
	so $\gamma-f_{\alpha,\beta}(X,Y) \in \Ker \psi$, and therefore, by~\eqref{decomp},
		$$
		\gamma=f_{\alpha,\beta}(X,Y) +z \gamma',
		$$
	for some $\gamma' \in \Hom_A(P,S)$. Using surjectivity of $f_{\bar{\alpha},\bar{\beta}} \circ \phi$ again, there exists $(X',Y') \in \Hom_A(P,R) \times \Hom_A(Q,S)$ such that 
		$$
		\gamma'=f_{\alpha,\beta}(X',Y') +z \gamma''.
		$$
	Thus we find that 
		\begin{align}
		\gamma &= f_{\alpha,\beta}(X,Y) +z \gamma' \\
		&= f_{\alpha,\beta}(X,Y) + z f_{\alpha,\beta}(X',Y') \\
		&= f_{\alpha,\beta}(X+zX',Y+zY'),
		\end{align}
	where we used that $z \in Z(A)$. Thus $f_{\alpha,\beta}$ is surjective.
	
	We conclude that for all $\alpha$ and $\beta$:
	\begin{equation}
	\label{drop}
	\Hom_{\Kscr^b(\projC_A)}(C(\alpha),C(\beta)[1])=0 \iff \Hom_{\Kscr^b(\projC_{\bar{A}})}(C(\bar{\alpha}),C(\bar{\beta})[1])=0.
	\end{equation}
	Since the assignment $C(\alpha)\in \mathcal K^b(\projC_A) \mapsto C(\bar \alpha)\in \mathcal K^b(\projC_{A/I})$ does not change the $g$-vectors, and these uniquely determine a rigid complex by Theorem~\ref{g-vector}, it is bijective. For the same reason it preserves and reflects direct sums, which means that the aforementioned assignment induces a bijection between the two-term rigid complexes for $A$ and the two-term rigid complexes for $A/I$ which restricts to a bijection between the indecomposable complexes, and therefore also between the silting complexes. The mutation quivers will coincide as well since by Theorem~\ref{compare} they coincide with the Hasse quivers of the posets formed by the  two-term silting complexes, and the order is preserved due to \eqref{drop}. 
	\end{proof}

As an immediate corollary of Theorem~\ref{maintheorem}, we obtain~\cite[Theorem B]{Adachi:2013aa} in the symmetric case.

	\begin{corollary}
	For a symmetric algebra $A$ with $\soc(A)\subseteq \rad(A)$, the $g$-vectors of two-term rigid (respectively silting) complexes for $A$ coincide with the ones for $A/\soc(A)$, as do the mutation quivers. 
	\end{corollary}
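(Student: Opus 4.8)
The plan is to deduce this corollary directly from Theorem~\ref{maintheorem} by exhibiting $\soc(A)$ as a centrally generated ideal contained in $\rad(A)$. The hypothesis $\soc(A)\subseteq\rad(A)$ is exactly the containment needed for one half; the real content is showing that $\soc(A)$ is generated by central elements when $A$ is symmetric. So the first step is to recall that for a symmetric algebra $A$ over $k$, with symmetrizing form $(-,-)\colon A\times A\to k$, the socle of $A$ as a bimodule coincides with the socle of $A$ as a left (or right) module, and moreover $\soc(A)=\soc({}_AA)=\soc(A_A)$ is a two-sided ideal which is central. Concretely, since $A$ is basic, $\soc(A)=\bigoplus_{i=1}^{l}\soc(P_i)$ where each $\soc(P_i)$ is one-dimensional, spanned by some $z_i$; one checks using the nondegenerate associative form that each $z_i$ spans a one-dimensional $A$-bimodule on which $A$ acts through the same character on both sides (because $a z_i$ and $z_i a$ are both in the one-dimensional space $\soc(P_i)$, and matching them against the form forces $a z_i = z_i a$), so $z_i\in Z(A)$, and $z_i^2=0$ since $z_i\in\rad(A)$ and $z_i\rad(A)=0$.

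The second step is purely formal: set $z=z_1+\cdots+z_l$, or rather observe that $\soc(A)=(z_1,\ldots,z_l)=\sum_i(z_i)$ with each $z_i\in Z(A)\cap\rad(A)$, so $\soc(A)\subseteq(Z(A)\cap\rad(A))\cdot A$. This is precisely the hypothesis of Theorem~\ref{maintheorem} with $I=\soc(A)$. Applying that theorem verbatim, the $g$-vectors of two-term rigid (respectively silting) complexes for $A$ coincide with those for $A/\soc(A)$, and the mutation quivers coincide as well.

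I expect the main obstacle to be the first step: carefully justifying that $\soc(A)$ is centrally generated. One has to be a little careful about the distinction between the left socle, the right socle and the bimodule socle, and about the role of the symmetrizing form. The cleanest route is: pick a basis $e_1,\ldots,e_l$ of primitive orthogonal idempotents; then $\soc(P_i)=\soc(e_iA)$ is one-dimensional over $k$ by basicness of $A$ together with the fact that $A$ being symmetric forces the socle of each indecomposable projective to be simple and isomorphic to the top (self-duality); let $z_i$ span it. Associativity and nondegeneracy of the form then give, for all $a\in A$, that the functionals $b\mapsto(az_i,b)$ and $b\mapsto(z_ia,b)$ agree (both compute to a multiple of the functional dual to $e_i$, using that $z_i$, $az_i$, $z_ia$ all lie in the line $\soc(P_i)\cong\soc(Ae_i)$), whence $az_i=z_ia$. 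Alternatively — and this is perhaps the slickest phrasing for the paper — one simply cites \cite{Adachi:2013aa} for the fact that in a symmetric algebra $\soc(A)$ is a centrally generated ideal, and notes that the corollary is then nothing but the specialization of Theorem~\ref{maintheorem}; indeed, since \cite[Theorem B]{Adachi:2013aa} is precisely the statement being recovered, the only thing to verify is the hypothesis $\soc(A)\subseteq(Z(A)\cap\rad(A))\cdot A$, which follows from $\soc(A)\subseteq\rad(A)$ together with centrality of the generators. Everything else is immediate from the already-proved Theorem~\ref{maintheorem}.
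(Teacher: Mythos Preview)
Your proposal is correct and follows the same overall strategy as the paper: show that $\soc(A)\subseteq Z(A)$ and then apply Theorem~\ref{maintheorem} with $I=\soc(A)$. The only difference is in how centrality of $\soc(A)$ is justified. The paper argues more conceptually: since $A$ is basic, $A/\rad(A)$ is commutative, hence satisfies $a\cdot m=m\cdot a$ as an $A$-$A$-bimodule; by symmetry $\soc(A)\cong\Hom_k(A/\rad(A),k)$ as bimodules, so the same identity holds in $\soc(A)$, i.e.\ $\soc(A)\subseteq Z(A)$. Your element-by-element argument via generators $z_i$ of $\soc(e_iA)$ reaches the same conclusion but is a bit more laborious, and the step ``$az_i$ and $z_ia$ both lie in the line $\soc(P_i)$'' deserves one more sentence of justification (it uses that for symmetric $A$ the left and right socles agree and that $z_i\in e_iAe_i$). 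Either way, once $\soc(A)\subseteq Z(A)\cap\rad(A)$ is established, the corollary is immediate from Theorem~\ref{maintheorem}. Note also that invoking \cite{Adachi:2013aa} for this fact, as you contemplate, would be circular since that is precisely the result being recovered.
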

	\begin{proof}
	We may assume without loss that $A$ is basic. In this case $A/\rad(A)$ is a commutative ring. That implies that
	$a\cdot m = m\cdot a$ for all $m\in A/\rad(A)$ (which we now see as an $A$-$A$-bimodule), and all $a\in A$.
	Since $A$ is symmetric we have $\soc(A) \cong \Hom_k(A/\rad(A), k)$ as an $A$-$A$-bimdule, which implies that
	$a\cdot m = m\cdot a$ for all $m\in \soc(A)$ and all $a\in A$. That is, $\soc(A)\subseteq Z(A)$.
	\end{proof}

Here is another immediate application of the foregoing theorem, which recovers the result of \cite{Adachi:2015aa} saying that the mutation quiver and the $g$-vectors of a Brauer graph algebra do not depend on the multiplicities of the exceptional vertices, without having to classify all $\tau$-tilting modules beforehand. 
\begin{example}\label{example_biserial}
Recall that an algebra $A=kQ/I$ is called \emph{special biserial} if
	\begin{enumerate}
		\item There are at most two arrows emanating from each vertex of $Q$.
		\item There are at most two arrows ending in each vertex of $Q$.
		\item For any path $\alpha_1\cdots \alpha_n\not \in I$ ($n\geq 1$) there is at most one arrow
		$\alpha_0$ in $Q$ such that $\alpha_0\cdot \alpha_1\cdots \alpha_n\not \in I$
		and there is at most one arrow $\alpha_{n+1}$ in $Q$ such that
		$\alpha_1\cdots \alpha_n\cdot \alpha_{n+1} \not \in I$.
	\end{enumerate}
Now suppose $A$ is symmetric special biserial. By the main result of~\cite{MR3406174}, these correspond exactly to the Brauer graph algebras, and using the description of the center of such algebras in~\cite[Proposition 2.1.1]{MR2469411}, Theorem~\ref{maintheorem} allows one to recover the fact that the poset of two-term tilting complexes of a Brauer graph algebra is independent of the multiplicities involved (cf.~\cite[Proposition 6.16]{Adachi:2015aa}), simply because all
Brauer graph algebras with the same Brauer graph but different exceptional multiplicities have the same quotient $A/\rad(Z(A))A$ (the sum of all walks around a vertex in the Brauer graph, with each adjacent edge occuring as a starting point precisely once, is a central element; one then obtains the isomorphism of the quotients fairly easily by checking that all relations involving the exceptional multiplicities become zero modulo the ideal generated by these central elements).
\end{example}

\section{String algebras}
\label{separated}

As a consequence of Theorem \ref{maintheorem} the classification of indecomposable $\tau$-rigid modules over an algebra $A$ often reduces to the same problem over a quotient $A/I$, which will typically have a simpler structure than $A$ itself. But of course this quotient still needs to be dealt with. One class of algebras for which one might hope to determine all indecomposable $\tau$-rigid modules are the algebras of radical square zero (see \cite{MR3362257}), but this class is not large enough for our purposes.  
In this section we study the $\tau$-rigid modules of string algebras, which are special biserial algebras (as defined in Example \ref{example_biserial})
with monomial relations. There is a well-known classification of indecomposable modules over these algebras, in terms of combinatorial objects known as ``strings'', which are certain walks around the $\Ext$-quiver of the algebra. All Auslander-Reiten sequences are known as well. Hence it is clear that it should be possible to give a combinatorial description of the $\tau$-rigid modules and support $\tau$-tilting modules in terms of these ``strings''. Note that for symmetric special biserial algebras such a classification exists already (see \cite{Adachi:2015aa}). But, as $A$ being symmetric does not imply that $A/I$ is symmetric as well, it is useful to consider non-symmetric special biserial algebras even if one is merely interested in symmetric algebras $A$. By Remark \ref{remark string is everything} below we may then restrict our attention to string algebras, even if we are interested in arbitrary special biserial algebras. 

\begin{definition}
	Let $Q$ be a finite quiver and and let $I$ be an ideal contained in the $k$-span of all paths of length $\geq 2$. We say that $A = kQ/I$ is a \emph{string algebra} if
	the following conditions are met:
	\begin{enumerate}
		\item There are at most two arrows emanating from each vertex of $Q$.
		\item There are at most two arrows ending in each vertex of $Q$.
		\item $I$ is generated by monomials.
		\item For any path $\alpha_1\cdots \alpha_n\not \in I$ ($n\geq 1$) there is at most one arrow
		$\alpha_0$ in $Q$ such that $\alpha_0\cdot \alpha_1\cdots \alpha_n\not \in I$
		and there is at most one arrow $\alpha_{n+1}$ in $Q$ such that
		$\alpha_1\cdots \alpha_n\cdot \alpha_{n+1} \not \in I$.
	\end{enumerate}
\end{definition}

We will now introduce the combinatorial notions which are needed to classify $\tau$-rigid modules over string algebras. For the most part we use the same terminology as used by Butler and Ringel in \cite{MR876976}, where they classify all (finite dimensional) indecomposable modules over string algebras, as well as all Auslander-Reiten sequences. We will nonetheless make some definitions which are particular to our situation, since we only have the very specific goal of classifying $\tau$-rigid modules in mind. One noteworthy detail on which we deviate from \cite{MR876976} is that, since the convention we use for multiplication in path algebras is the opposite of the one used in \cite{MR876976}, the string module $M(\alpha_1\cdots\alpha_m)$ we define below is going to correspond to the string module $M(\alpha_1^{-1}\cdots \alpha_m^{-1})$ as defined in \cite{MR876976}.    

\begin{definition}
	Let $A = kQ/I$ be a string algebra, and let $Q_1 =\{\alpha_1,\ldots, \alpha_h\}$ denote the set of arrows in $Q$. By $\alpha_i^{-1}$ for $i\in\{1,\ldots,h\}$ we denote formal inverses of the arrows $\alpha_i$.
	\begin{enumerate}
	\item A \emph{string} $C$ is a word $c_1\cdots c_m$, where $c_i \in \{\alpha_1, \alpha_1^{-1},\ldots,\alpha_h,\alpha_h^{-1}\}$ such that $c_i\neq c_{i+1}^{-1}$ for all $i\in\{1,\ldots,m-1\}$ and for every subword $W$ of $C$, $W \notin I$ and $W^{-1} \notin I$. We also ask that if $C$ contains a subword of the form $\alpha_i\cdot \alpha_j^{-1}$, then the target of $\alpha_i$ is equal to the target of $\alpha_j$, and 
	if $C$ contains a subword of the form $\alpha_i^{-1}\cdot \alpha_j$, then the source of $\alpha_i$ is equal to the source of $\alpha_j$.
	Moreover, for each vertex $e$ of $Q$, we define two paths of length zero, one of which is called ``direct'' and one of which is called ``inverse'' (this will make sense in the context of the next point below).
	\item We call a string of length greater than zero \emph{direct} if all $c_i$'s are arrows, and \emph{inverse} if all  $c_i$'s are inverses of arrows. We call a string \emph{directed} if it is either direct or inverse.  
	\item For an arrow $\alpha_i$ we denote by $s(\alpha_i)$ its \emph{source} and by $t(\alpha_i)$ its \emph{target}.
	We define $s(\alpha_i^{-1})=t(\alpha_i)$ and $t(\alpha_i^{-1})=s(\alpha_i)$. We extend this notion to strings by defining $s(c_1\cdots c_m)=s(c_1)$ and $t(c_1\cdots c_m)=t(c_m)$. For a string $C$ of length zero, given by a vertex $e$, we define $s(C)=t(C)=e$.
	\item If $C$ is directed, then we define the corresponding direct string $\bar C$ as follows: if $C$ is direct, then $\bar C := C$, and if $C$ is inverse, then $\bar C := C^{-1}$.
	\item Given a string $C$ of length greater than zero let $C_1\cdots C_l$ be the unique factorization of $C$ such that each $C_i$ is directed of length greater than zero, and for each $i\in\{1,\ldots,l-1\}$ exactly one of the strings $C_i$ and $C_{i+1}$ is direct.
	If $C$ is of length zero we set $l=1$ and define $C_1=C_l:=C$.
	\item We call $C_1$ a \emph{loose end} if $C_1$ is inverse and  $C_1^{-1}\cdot \alpha_i \in I$ for all arrows $\alpha_i$. Similarly, we call $C_l$ a \emph{loose end} if  $C_l$ is direct and  $C_l\cdot \alpha_i\in I$ for all arrows $\alpha_i$. The other constituent factors $C_{2},\cdots, C_{l-1}$ are never considered loose ends.
	\item Assume that $C$ is not of length zero. Then we define a string $_PC$ as follows: if $C_1$ is a loose end, we define $_PC=C_2\cdots C_l$ (or one of the strings of length zero corresponding to $t(C_1)$ if $l=1$). If $C_1$ is not a loose end 
	then there is at most one arrow $\alpha_i$ such that $\alpha_{i}^{-1}\cdot C$ is a string, and we define $_PC:= \alpha_{i}^{-1}\cdot C$ if such an $\alpha_i$ exists, and $_PC := C$ otherwise. In the same vein, if $C_l$ is a loose end we define $C_P := C_1\cdots C_{l-1}$ (or one of the strings of length zero corresponding to $s(C_l)$ if $l=1$). If $C_l$ is not a loose end, then there is at most one arrow $\alpha_i$ such that $C\cdot \alpha_i$ is a string and we define $C_{P} := C\cdot \alpha_i$ if such an $\alpha_i$ exists, and $C_P := C$ otherwise.
	
	Now, if $C$ is of length zero, then $C$ is given by a vertex $e$, and we define 
	$C_P = {_PC}=\alpha_i^{-1}$ for some $\alpha_i$ emanating from $e$, provided such an arrow exists, and $C_P ={ _PC}:=C$ if no such arrow exists (note that we make a choice here, so in order to make $_PC$ and $C_P$ well-defined, we technically have to designate one of the arrows emanating from each vertex as the one to be used).
	
	Unless $l=1$ and $C_1=C_l$ is a loose end, we define ${_PC_P} := (_PC)_P={_P(C_P)}$. If $l=1$ and $C_1=C_l$ is an inverse loose end, then we define $_PC_P := {_P(C_P)}$, and if $C_1=C_l$ is a direct loose end we define ${_PC_P} := (_PC)_P$. 
	
	Note that we always have $_P(C^{-1})_P = ({_PC_P})^{-1}$.
	\item If $C$ has length greater than zero, then we call 
	\begin{equation}
		I_C(0)=s(C_1),\ I_C(1)=t(C_1),\ I_C(2)=t(C_2),\ \ldots,\ I_C(l)=t(C_l)
	\end{equation}
	the \emph{intermediate points} of $C$, and for $1\leq i < l$ we call $C_{i}$ and $C_{i+1}$ the \emph{adjacent 
	directed strings} of the intermediate point $I_C(i)$. We say that $C_1$ is the adjacent directed string of 
	$I_C(0)$ and $C_l$ is the adjacent directed string of $I_C(l)$. We say that $I_C(i)$ is an \emph{upper intermediate point} if 
	$C_{i}$ (if it exists, i.e. if $i>0$) is inverse and $C_{i+1}$ (if it exists) is direct. We call $I_C(i)$ a \emph{lower intermediate point} if $C_i$ (if it exists) is direct and $C_{i+1}$ (if it exists) is inverse. In particular, $I_C(0)$ is an upper (respectively lower) intermediate point if $C_1$ is direct (respectively inverse) and $I_C(l)$ is an upper (respectively lower) intermediate point if $C_l$ is inverse (respectively direct).
	
	If $C$ is of length zero, then it corresponds to a vertex $e$, which we consider an upper intermediate point of 
	$C$. That is, $I_C(0)=e$ is an upper intermediate point (and, by definition, the only intermediate point of $C$), and we say that there are no adjacent directed strings.
	\item Let $C$ and $D$ be two strings. Write $C'={_PC_P}=C'_1\cdots C'_m$ and 
	$D'={_PD_P}=D'_1\cdots D'_n$. We say that \emph{$D$ is $C$-rigid} if the following two conditions are met:
	\begin{enumerate}
	\item For any $i\in\{0,\ldots,m\}$ such that $I_{C'}(i)$ is a lower intermediate point of $C'$ and any $j\in\{0,\ldots,n\}$ such that $I_{D'}(j)$ is an upper intermediate point of $D'$ we have that any direct string $W$ with $s(W)=I_{D'}(j)$  and $t(W)=I_{C'}(i)$ factors as either
	$W= \bar X \cdot W'$, where $X$ is an adjacent string of $I_{D'}(j)$, or as $W=W'\cdot \bar Y$, where $Y$ is an adjacent string of $I_{C'}(i)$.
	\item Assume that there are $i\in\{1,\ldots,m\}$ and $j\in\{1,\ldots,n\}$ such that $I_{C'}(i)=I_{D'}(j)$ and
	$I_{C'}(i)$ and $I_{D'}(j)$ are either both upper intermediate points or they are both lower intermediate points. 
	By replacing, if necessary, $D'$ by $D'^{-1}$ (and, as a consequence, $D'_x$ by $D'^{-1}_{n-x+1}$ for each $x$) and $j$ by $n-j$ we can assume without loss that if $i+1 \leq m$ and $j+1\leq n$, then the strings $C_{i+1}'$ and $D_{j+1}'$ both start with the same 
	arrow or inverse of an arrow, and if $i>0$ and $j>0$ then the strings $C_i'$ and $D_j'$ both end on the same arrow or inverse of an arrow.
	 
	 Define $t(1)=1$ and $t(-1)=0$. For $\sigma \in\{1,-1\}$ we define $e(\sigma)\in \Z _{\geq 0}$  to be maximal with respect to the property that 
	\begin{equation}
	C'_{i+\sigma  x+t(\sigma)}=D'_{j+\sigma x+t(\sigma)}\textrm{ for all $0\leq x < e(\sigma)$}
	\end{equation} 
	whilst at the same time satisfying  $0 \leq i+\sigma e(\sigma)\leq m$ and $0 \leq j+\sigma e(\sigma)\leq n$.
	Now we ask that one of the following holds for at least one of the two choices for $\sigma$:	
	\begin{enumerate}
	\item\label{rim_cond} $E_{C'}(i+\sigma e(\sigma))$ is an upper intermediate point and $i+\sigma (e(\sigma)+1) \in\{-1, m+1\}$ 
	\item\label{fact_cond} $E_{C'}(i+\sigma e(\sigma))$ is an upper intermediate point, the previous condition is not met, $j+\sigma (e(\sigma)+1)\not \in \{-1, n+1\}$ and 
	\begin{equation}
		\bar C'_{i+\sigma e(\sigma)+t(\sigma)}=\bar D'_{j+\sigma e(\sigma)+t(\sigma)} \cdot W
	\end{equation}
	for some direct string $W$ (which, by the maximality of $e(\sigma)$, must have positive length).
	\item\label{rim_cond2} $E_{C'}(i+\sigma e(\sigma))$ is a lower intermediate point and $j+\sigma (e(\sigma)+1) \in\{-1, n+1\}$ 
	\item\label{fact_cond2} $E_{C'}(i+\sigma e(\sigma))$ is a lower intermediate point, the previous condition is not met, $i+\sigma (e(\sigma)+1)\not \in \{-1, m+1\}$ and 
	\begin{equation}
		\bar D'_{j+\sigma e(\sigma)+t(\sigma)}=W\cdot \bar C'_{i+\sigma e(\sigma)+t(\sigma)}
	\end{equation} 
	for some direct string $W$ (which, by the maximality of $e(\sigma)$, must have positive length).
	\end{enumerate}
	\end{enumerate} 
	\item We say that a string $C$ is \emph{rigid} if $C$ is $C$-rigid.
	\item We say that a vertex $e$ lies in the support of a string $C$ if one of the following holds (again $C'={_PC_P}=C_1'\cdots C_m'$):
	\begin{enumerate}
	\item $I_{C'}(i)=e$ for some lower intermediate point $I_{C'}(i)$ of $C'$ with $i\neq 0, m$.
	\item There is a direct string $W$ whose source is an upper intermediate point $I_{C'}(i)$ of $C'$ and whose target is $e$, such that $W$
	does not factor as $W=\bar C_j'\cdot W'$ for any adjacent directed string $C_j'$ of $I_{C'}(i)$.
	\end{enumerate}
	\end{enumerate}
\end{definition}

For a vertex $e$ of $Q$ we denote by $P_e=e\cdot A$ the corresponding projective indecomposable module. Given two vertices $e$ and $f$ we will identify direct strings $C$ with $s(C)=e$ and $t(C)=f$ with the homomorphism from
$P_f$ to $P_e$ induced by left multiplication with $C$ (considered as an element of $A$).

\begin{definition}\label{definition string modules}
	Let $A=kQ/I$ be a string algebra and let $C$ be a string. Decompose $C'={_PC_P}= C'_1\cdots  C'_m$. If $C'$ has length greater than zero we define the \emph{string module} $M(C)$ as follows:
	\begin{enumerate}
	\item If $C'_1$ is direct and $C'_m$ is inverse (note that in this case $m$ is even):
	define $Q(i) := P_{t(C'_{2i-1})}$ and $P(i) := P_{s(C'_{2i-1})}$ for $i\in \{1,\ldots,\frac{m}{2}\}$. Define $P(\frac{m}{2}+1):= P_{t(C'_{m})}$. Define
	\begin{equation}
		Q := \bigoplus_{i=1}^{\frac{m}{2}} Q(i)\quad\textrm{ and }\quad P := \bigoplus_{i=1}^{\frac{m}{2}+1} P(i)
	\end{equation}
	Furthermore, for each $i\in\{1,\ldots, \frac{m}{2}\}$, we denote by $\pi_{Q(i)}$ the projection from $Q$ onto $Q(i)$, and for each $i\in\{1,\ldots, \frac{m}{2}+1\}$ we denote by $\iota_{P(i)}$ the embedding of $P(i)$ into $P$. We define 
	a homomorphism $\psi_C:\ Q \longrightarrow P$ as follows:
	\begin{equation}
		\psi_C = \sum_{i=1}^{\frac{m}{2}} \iota_{P(i)}\circ C'_{2i-1}\circ \pi_{Q(i)} + \iota_{P(i+1)} \circ C'^{-1}_{2i} \circ \pi_{Q(i)}
	\end{equation} 
	\item If $C'_1$ is inverse and $C'_m$ is direct (in this case $m$ is even):
	define $Q(i) := P_{s(C'_{2i-1})}$ and $P(i) := P_{t(C'_{2i-1})}$ for $i\in \{1,\ldots,\frac{m}{2}\}$. Define $Q(\frac{m}{2}+1):= P_{t(C'_{m})}$. Define
	\begin{equation}
		Q := \bigoplus_{i=1}^{\frac{m}{2}+1} Q(i)\quad\textrm{ and }\quad P := \bigoplus_{i=1}^{\frac{m}{2}} P(i)
	\end{equation}
	Furthermore, for each $i\in\{1,\ldots, \frac{m}{2}+1\}$, we denote by $\pi_{Q(i)}$ the projection from $Q$ onto $Q(i)$, and for each $i\in\{1,\ldots, \frac{m}{2}\}$ we denote by $\iota_{P(i)}$ the embedding of $P(i)$ into $P$. We define 
	a homomorphism $\psi_C:\ Q \longrightarrow P$ as follows:
	\begin{equation}
		\psi_C = \sum_{i=1}^{\frac{m}{2}} \iota_{P(i)}\circ C'^{-1}_{2i-1}\circ \pi_{Q(i)} + \iota_{P(i)} \circ C'_{2i} \circ \pi_{Q(i+1)}
	\end{equation} 
	\item If $C'_1$ is direct and $C'_m$ is direct (in this case $m$ is odd):
	define $Q(i) := P_{t(C'_{2i-1})}$ and $P(i) := P_{s(C'_{2i-1})}$ for $i\in \{1,\ldots,\frac{m+1}{2}\}$. Define
	\begin{equation}
		Q := \bigoplus_{i=1}^{\frac{m+1}{2}} Q(i)\quad\textrm{ and }\quad P := \bigoplus_{i=1}^{\frac{m+1}{2}} P(i)
	\end{equation}
	Furthermore, for each $i\in\{1,\ldots, \frac{m+1}{2}\}$, we denote by $\pi_{Q(i)}$ the projection from $Q$ onto $Q(i)$, and by $\iota_{P(i)}$ the embedding of $P(i)$ into $P$. We define 
	a homomorphism $\psi_C:\ Q \longrightarrow P$ as follows:
	\begin{equation}
		\psi_C = \iota_{P(\frac{m+1}{2})}\circ C'_m \circ \pi_{Q(\frac{m+1}{2})}+\sum_{i=1}^{\frac{m-1}{2}} \iota_{P(i)}\circ C'_{2i-1}\circ \pi_{Q(i)} + \iota_{P(i+1)} \circ C'^{-1}_{2i} \circ \pi_{Q(i)}
	\end{equation} 
	\end{enumerate}
	In each of the above three cases we define $M(C)$ as the cokernel of $\psi_C$, and we note that $\psi_C$ is a minimal projective presentation of $M(C)$. In case 
	 both $C'_1$ and $C'_m$ are inverse we can define $M(C)$ as $M(C^{-1})$ (this will fall into the ``$C'_1$ and $C'_m$ both direct'' case). Note that $M(C)\cong M(C^{-1})$ holds in the other cases as well.
	 
	 Now, if $C'$ is of length zero, then it is given by a vertex $e$, and we define $M(C) = P_e$.
\end{definition}

Note that the previous definition is much less technical than it looks: given a string $C$, we quite simply form $C'={_PC_P}$, and then define a presentation $Q\longrightarrow P$ such that the indecomposable direct summands of $P$ are in bijection with the upper intermediate points of $C'$ and the indecomposable  direct summands of $Q$ are in bijection with the lower intermediate points of $C'$. The map between $Q$ and $P$ is then simply the sum of the direct versions $\bar C'_1,\ldots, \bar C'_m$ of the factors   $C'_1,\ldots, C'_m$,  each being considered as a map from the summand of $Q$ corresponding to its target to the summand of $P$ corresponding to its source.

\begin{proposition}
	Let $A=kQ/I$ be a string algebra and let $M$ be an indecomposable $\tau$-rigid $A$-module. Then $M$ is a string module.
\end{proposition}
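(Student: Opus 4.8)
The strategy is to use the well-known classification of indecomposable modules over a string algebra $A = kQ/I$ due to Butler and Ringel \cite{MR876976}: every indecomposable $A$-module is either a string module $M(C)$ for some string $C$, or a band module $M(B,\lambda,n)$ for some band $B$, scalar $\lambda \in k^{\times}$ and $n \geq 1$. Hence it suffices to prove that no band module can be $\tau$-rigid. I would first recall that an $A$-module $M$ is $\tau$-rigid if and only if its minimal projective presentation $P^{-1} \xrightarrow{p} P^{0}$ gives a two-term presilting complex, equivalently (by Lemma~\ref{denseorbits}) the orbit of $p$ under $G = \Aut_A(P^{-1}) \times \Aut_A(P^0)$ is dense in $\Hom_A(P^{-1},P^0)$; so a $\tau$-rigid module is rigid as a point in that representation variety. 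Dually, one can argue via the standard fact that a $\tau$-rigid module $M$ satisfies $\dim_k \Hom_A(M,M) - \dim_k \Hom_A(M, \tau M) = \langle \underline{\dim} M, \underline{\dim} M\rangle$ together with the observation that $M$ has no self-extensions in a strong enough sense; but the cleanest route is via bricks and the geometry already set up.

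Concretely, I would proceed as follows. First reduce to the case where $M$ is indecomposable (given). Next, recall from Butler--Ringel the explicit description of the Auslander--Reiten translate and of $\Hom$-spaces between string and band modules in terms of combinatorial data (graph maps / admissible pairs of substrings). The key point is that a band module $M(B,\lambda,n)$ always admits a nonzero homomorphism to $\tau M(B,\lambda,n)$: band modules come in one-parameter families lying in homogeneous tubes of the Auslander--Reiten quiver, and for any module $N$ lying in a tube one has $\Hom_A(N,\tau N) \neq 0$ (for instance, $\tau$ acts on a homogeneous tube and the quasi-simple band module $M(B,\lambda,1)$ is $\tau$-periodic with $\tau M(B,\lambda,1) \cong M(B,\lambda,1)$, so $\Hom_A(M(B,\lambda,1),\tau M(B,\lambda,1)) = \End_A(M(B,\lambda,1)) \neq 0$; and the higher band modules surject onto, or embed into, translates of lower ones). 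Hence band modules are never $\tau$-rigid. An alternative self-contained argument: band modules have a self-extension, i.e. $\Ext^1_A(M,M) \neq 0$ — one builds it from the cyclic structure of the band, exactly as one builds the family $M(B,\lambda,n)$ from $M(B,\lambda,1)$ — and for a module over a string algebra (more generally any finite-dimensional algebra) $\Hom_A(M,\tau M) = 0$ forces $\Ext^1_A(M,M) = 0$ whenever the projective dimension considerations allow the Auslander--Reiten formula $\Ext^1_A(M,M) \cong D\overline{\Hom}_A(\tau^{-1}M, M)$... but the simplest is to invoke that $\tau$-rigid modules are in particular rigid (no self-extensions), which rules out band modules since these always self-extend.

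Therefore, combining these two inputs: $M$ indecomposable $\tau$-rigid $\Rightarrow$ $M$ has no self-extensions $\Rightarrow$ $M$ is not a band module $\Rightarrow$ (by Butler--Ringel) $M$ is a string module. I expect the main obstacle to be pinning down the right reference or argument for ``band modules have self-extensions'' (equivalently ``lie in tubes'', equivalently ``are not rigid'') in the possibly non-symmetric string algebra setting, since Butler--Ringel state their results for string algebras but one must be slightly careful that the homogeneous-tube picture and the formula $\tau M(B,\lambda,1) \cong M(B,\lambda,1)$ hold in the generality needed here. Once that is settled the rest is immediate. A secondary, purely cosmetic point is to make sure the minimal projective presentation extracted in Definition~\ref{definition string modules} is compatible with the one used to define $g$-vectors, but that is already asserted there and not needed for this proposition.
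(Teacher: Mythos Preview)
Your approach is correct and essentially the same as the paper's: use the Butler--Ringel classification and then rule out band modules by showing $\Hom_A(M,\tau M)\neq 0$ for them. The paper's proof is even more direct than your sketch: it simply cites \cite{MR876976} for the fact that \emph{every} band module (not just the quasi-simples) occurs as both the leftmost and the rightmost term of an Auslander--Reiten sequence, hence $\tau M\cong M$ for all band modules, so $\Hom_A(M,\tau M)=\End_A(M)\neq 0$. Your worry about the higher band modules and about the non-symmetric setting is therefore unnecessary --- Butler--Ringel work in exactly this generality, and the detour through self-extensions or rigidity is not needed.
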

\begin{proof}
	By \cite[Theorem on page 161]{MR876976} each indecomposable $A$-module is either a string module or a so-called band module. By  \cite[Bottom of page 165]{MR876976} each band module occurs in an Auslander-Reiten sequence as both the leftmost and the rightmost term, which means that each band module is isomorphic to its Auslander-Reiten translate. But by definition such a module cannot be $\tau$-rigid.
\end{proof}

\begin{proposition}
	Let $A=kQ/I$ be a string algebra and let $C$ and $D$ be two strings. Denote by 
	$T(C)^\bullet\in \mathcal K^b(\projC_A)$ and $T(D)^\bullet \in \mathcal K^b(\projC_A)$ minimal projective presentations of $M(C)$ respectively $M(D)$. Then
	$\Hom_{\mathcal K^b(\projC_A)}(T(C)^\bullet, T(D)^\bullet[1])=0$ if and only if $D$ is $C$-rigid.
\end{proposition}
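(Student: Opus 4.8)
The plan is to translate the homological vanishing condition into the combinatorial language of strings by unraveling the $\Hom$-space in $\mathcal K^b(\projC_A)$ between the two-term complexes $T(C)^\bullet\colon Q_C\xrightarrow{\psi_C} P_C$ and $T(D)^\bullet[1]\colon P_D\xrightarrow{\psi_D[1]} Q_D$ (with the notation of Definition~\ref{definition string modules}), and to match it term-by-term against the clauses (a) and (b) in the definition of ``$D$ is $C$-rigid''. First I would record the standard description: a homotopy class in $\Hom_{\mathcal K^b(\projC_A)}(T(C)^\bullet,T(D)^\bullet[1])$ is represented by a homomorphism $f\colon P_C\to Q_D$ (the degree-$0$ component), modulo those of the form $h\circ\psi_C+\psi_D\circ h'$. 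Since $P_C=\bigoplus P(i)$ is indexed by the upper intermediate points of $C'={_PC_P}$ and $Q_D=\bigoplus Q(j)$ by the lower intermediate points of $D'={_PD_P}$, the map $f$ decomposes into components $f_{ij}\colon P(i)\to Q(j)$, each of which is a $k$-linear combination of direct strings $W$ with $s(W)=I_{D'}(j)$ and $t(W)=I_{C'}(i)$. So the whole problem is: decide when every such component system can be killed modulo the homotopy relations, i.e. modulo left composition with the factors $\bar D'_j$ of $\psi_D$ landing in $Q(j)$ and right precomposition with the factors $\bar C'_i$ of $\psi_C$ emanating from $P(i)$.

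The key steps, in order, would be: (1) Reduce to generators — it suffices to check vanishing on the finitely many basis elements given by single direct strings $W$ between an upper intermediate point of $C'$ and a lower intermediate point of $D'$, because $\Hom_A(P(i),Q(j))$ has a $k$-basis of such $W$ (string-algebra combinatorics, $\dim e_fAe_e$ counted by paths). (2) Analyze when a single $W$ is a homotopy — this is exactly the assertion that $W$ factors through one of the maps appearing in $\psi_C$ or $\psi_D$, i.e. $W=\bar X\cdot W'$ for an adjacent string $X$ of $I_{D'}(j)$ (factoring through $\psi_D$ on the source side) or $W=W'\cdot \bar Y$ for an adjacent string $Y$ of $I_{C'}(i)$ (factoring through $\psi_C$ on the target side). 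This is precisely clause (a). Show that for components that are not ``diagonal'' (i.e. where the source vertex $I_{D'}(j)$ and target vertex $I_{C'}(i)$ do not coincide, or coincide but with mismatched parity), clause (a) is both necessary and sufficient for $f_{ij}$ to be a coboundary, using that the special biserial/string structure forces any $W$ to extend uniquely on each side, so ``factoring through $\psi$'' can be read off locally. (3) Handle the diagonal case — when $I_{C'}(i)=I_{D'}(j)$ with matching parity, the identity map at that vertex is a potential nonzero cohomology class, and killing it requires propagating the homotopy along the common stretch of $C'$ and $D'$. This is where the maximal-agreement exponent $e(\sigma)$ and the rim/factorization conditions \eqref{rim_cond}--\eqref{fact_cond2} enter: the homotopy can be ``pushed'' left or right along matching factors $C'_{i+\sigma x+t(\sigma)}=D'_{j+\sigma x+t(\sigma)}$ until it either runs off the end of $C'$ or $D'$ (rim conditions) or gets absorbed into a strictly longer factor (factorization conditions). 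Prove that the homotopy class vanishes iff one of these termination scenarios occurs for some choice of $\sigma$, which is exactly clause (b). (4) Assemble — show the cohomology is the direct sum over component indices of the local contributions analyzed in (2)--(3), so global vanishing $\iff$ clause (a) for all relevant $(i,j)$ and clause (b) for all diagonal coincidences $\iff$ $D$ is $C$-rigid.

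The main obstacle will be step (3), the diagonal case: unlike step (2), which is a genuinely local statement, here a nonzero ``diagonal'' element at a shared intermediate point may be equivalent modulo homotopy to an element supported elsewhere, so one cannot decide vanishing purely locally — one must carefully track how the homotopy relation transports a correction term along the maximal common substring of $C'$ and $D'$ and show that the obstruction to eliminating it is captured exactly by whether the agreement terminates in one of the four prescribed ways. The bookkeeping is delicate because of the normalization step (replacing $D'$ by $D'^{-1}$ to align the arrows at the coincidence point) and because the two directions $\sigma=1$ and $\sigma=-1$ must be treated symmetrically, with only one of them needing to succeed; I would set this up by choosing, for a putative nonzero class, a representative of minimal ``support length'' and deriving a contradiction with the maximality of $e(\sigma)$ unless one of \eqref{rim_cond}--\eqref{fact_cond2} holds. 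The converse direction — that each of the four conditions actually forces vanishing — is the more mechanical half and amounts to writing down the explicit homotopy $(h,h')$ term by term along the agreement stretch.
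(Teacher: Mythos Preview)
Your outline is close to the paper's, but there is a genuine indexing error and a resulting misidentification of what the ``diagonal'' case actually is.

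First, the chain map goes the other way: a morphism $T(C)^\bullet\to T(D)^\bullet[1]$ in $\Cscr^b(\projC_A)$ is a map $f\colon Q_C\to P_D$ (the only degree in which both complexes are nonzero is $-1$; compare the exact sequence of Proposition~\ref{exact}). Hence the basis elements are direct strings $W$ from an \emph{upper} intermediate point of $D'$ to a \emph{lower} intermediate point of $C'$, which is exactly what clause~(a) quantifies over. With $f\colon P_C\to Q_D$ your homotopy formula $h\circ\psi_C+\psi_D\circ h'$ does not even type-check.

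Second, and more substantively: once the direction is corrected, a coincidence $I_{C'}(i)=I_{D'}(j)$ with both points upper (or both lower), as in clause~(b), does \emph{not} index a basis element of $\Hom_A(Q_C,P_D)$ at all. It indexes a \emph{homotopy generator} $h_C$ (resp.\ $h_D$) built from the length-zero string at that vertex. The mechanism the paper uses, which your step~(2) does not articulate, is this: for a homotopy generator $h(W,\cdot,\cdot)$ with $W$ of positive length, the unique-continuation axiom of a string algebra forces $h$ to involve at most one basis element, so clause~(a) (which says every basis element appears in some $h$) already makes those basis elements coboundaries. The only remaining homotopy generators are the length-zero ones, each of which involves \emph{two} basis elements. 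These two-term relations link into chains under ``share a basis element'', and such a chain spans all of its basis elements iff some link in it is a one-term relation; the four sub-conditions of clause~(b) encode exactly when this happens (the chain hits the boundary of $C'$ or $D'$, or terminates in a strict factorization). So the propagation in your step~(3) takes place on the side of the homotopy \emph{relations}, not of cohomology representatives, and your step~(4) claim that the cohomology decomposes as a direct sum over $(i,j)$ is false for precisely this reason: the length-zero homotopies couple different components.
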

\begin{proof}
	We know that a minimal projective presentation of the string module $M(C)$ is given by the following two-term complex:
	\begin{equation}
		T(C)^\bullet = \bigoplus_i Q^{(C)}(i) \stackrel{\psi_C}{\longrightarrow} \bigoplus_j P^{(C)}(j)  
	\end{equation}
	where $i$ ranges over all lower intermediate points of $C'={_PC_P}$ and $j$ ranges over all upper intermediate points of $C'$ (just as in Definition \ref{definition string modules}, we merely added the superscript $(C)$, and are intentionally less explicit about the range of the direct sum in order to avoid having to deal with three different cases again). In the same vein we have the minimal projective presentation
	\begin{equation}
		T(D)^\bullet = \bigoplus_i Q^{(D)}(i) \stackrel{\psi_D}{\longrightarrow} \bigoplus_j P^{(D)}(j)  
	\end{equation}
	of $M(D)$, where $i$ and $j$ range over the lower respectively upper intermediate points of 
	$D'={_PD_P}$.
	We adopt the following notation for homomorphisms: given a direct string $W$ whose source is the upper intermediate point of $D'$ associated with $P^{(D)}(j)$ and whose target is the lower intermediate point of $C'$ associated with $Q^{(C)}(i)$, we denote by $W_{j,i}$ the element of $\Hom_A(Q^{(C)}(i), P^{(D)}(j))$ induced by left multiplication with $W$. Whenever we write $W_{j,i}$ below, we will mean this to tacitly imply that $W$ starts and ends in the right vertices.
	Moreover, we identify 
	\begin{equation}
		\bigoplus_{i,j}\Hom_A(Q^{(C)}(i), P^{(D)}(j)) = \Hom_{\mathcal C^b(A)} (T(C)^\bullet, T(D)^\bullet[1])
	\end{equation}
	Note that the $W_{j,i}$ form a basis of the above vector space, and we will refer to them as \emph{basis elements} in what follows. We say that $W_{j,i}$ is \emph{involved} in an element $\varphi$ of the above space if the coefficient of $W_{j,i}$ is non-zero when we write $\varphi$ as a linear combination of the basis elements. 
	 
	Now the condition $\Hom_{\mathcal K^b(\projC_A)}(T(C)^\bullet, T(D)^\bullet[1])=0$ is equivalent to asking that for each summand $Q^{(C)}(i)$ of $Q^{(C)}$ (that is, for each lower intermediate $i$ point of $C'$) and each summand $P^{(D)}(j)$ of $P^{(D)}$ (that is, for each upper intermediate point $j$ of $D'$) each basis element  $W_{j,i}$ is zero-homotopic. One deduces from the definition of $\psi_C$ and $\psi_D$ that the space of zero-homotopic maps from $T(C)^\bullet$ to $T(D)^\bullet[1]$ is spanned by the following two families of maps:
	\begin{enumerate}
	\item Let $u$ be an upper intermediate point of $D'$ and let $u'$ be an upper intermediate point of $C'$. We define
		\begin{equation}
		\begin{array}{rcll}
		h_C(W,u,u') &=& (W\cdot \bar C'_{u'})_{u,u'-1} + (W\cdot \bar C'_{u'+1})_{u,u'+1} & \textrm{if $0 < u' < m$} \\ \\
		h_C(W,u,u') &=&  (W\cdot \bar C'_{u'+1})_{u,u'+1} & \textrm{if $0 = u' < m$} \\ \\
		h_C(W,u,u') &=&  (W\cdot \bar C'_{u'})_{u,u'-1} & \textrm{if $0 < u' = m$} \\ \\
		\end{array}
		\end{equation}
		if $C'$ has length greater than zero, and $h_C(W,u,u') = 0$ otherwise.
	\item Let $l$ be a lower intermediate point of $D'$ and let $l'$ be a lower intermediate point of $C'$
	\begin{equation} 
	\begin{array}{rcll}
		h_D(W,l,l') &=& (\bar D'_{l}\cdot W)_{l-1,l'}+(\bar D'_{l+1} \cdot W)_{l+1,l'} & \textrm{if $0<l<n$} \\ \\
		h_D(W,l,l') &=& (\bar D'_{l+1} \cdot W)_{l+1,l'} & \textrm{if $0=l<n$} \\ \\
		h_D(W,l,l') &=& (\bar D'_{l}\cdot W)_{l-1,l'}& \textrm{if $0<l=n$} \\ \\
	\end{array}
	\end{equation}
	By definition, $D'$ having a lower intermediate point implies that $D'$ is of length greater than zero, so the length zero case does not need to be considered. 
	\end{enumerate}
	The source and target of the direct string $W$ are $I_{D'}(u)$ and $I_{C'}(u')$ in the first case and 
	$I_{D'}(l)$ and $I_{C'}(l')$ in the second.
	
	The first condition in the definition of $C$-rigidness is fulfilled if and only if each basis element $W_{j,i}$ is involved in some  $h_C(W',u,u')$ or some $h_D(W', l, l')$ for some $W'$. So clearly the first condition is necessary. 
	
	Now notice that if $W$ has positive length, then the unique continuation condition in the definition of string algebras ensures that $h_C(W,u,u')$ respectively $h_D(W,l,l')$ actually involves at most one basis element. Hence every $W_{j,i}$ is zero-homotopic if and only if all basis vectors involved in maps of the form $h_C(I_{C'}(u'),u,u')$ with $I_{C'}(u')=I_{D'}(u)$ and $h_D(I_{C'}(l'),l,l')$ with $I_{C'}(l')=I_{D'}(l)$ are zero-homotopic. So assume that we have such a pair $l',l$ respectively $u',u$. These correspond precisely to the pairs $i,j$ which are considered in the second part of the definition of $C$-rigidness. We may assume that $D'$ is oriented as in the definition of $C$-rigidness, and we get non-negative integers $e(\sigma)$ for $\sigma \in\{1,-1\}$ just as in said definition. 
	For ease of notation we will write $h$ instead of $h_C$ and $h_D$ (the parameters do in fact determine which of the two we are dealing with). So we want to know when the basis elements involved in $h(I_{C'}(i), j, i)$ are zero-homotopic, that is, can be written as a linear combination of other $h$'s. Without loss of generality we can assume that all $h$'s occurring in such a linear combination lie in the equivalence class of $h(I_{C'}(i), j, i)$
	with respect to the transitive closure of the relation $h(W, a, b)\sim h(W',c,d)$ if there is a basis element which is involved in both $h(W, a, b)$ and $h(W',c,d)$. We call $h(W, a, b)$ and $h(W',c,d)$ neighbors of each other. Note that either $c=a+1$ and $d=b+1$, in which case we call $h(W',c,d)$ a right neighbor of $h(W, a, b)$, or $c=a-1$ and $d=b-1$, in which case $h(W',c,d)$ is called a left neighbor of $h(W, a, b)$. Left and right neighbors are unique if they exist.
	For any $-e(-1)< x < e(1)$
	we have 
	\begin{equation}
		h(I_{C'}(i+x), j+x,i+x) = \left\{ \begin{array}{cc}(\bar C'_{i+x})_{j+x,i+x-1} + (\bar C'_{i+x+1})_{j+x,i+x+1}  \\ \textrm{ or } \\ (\bar C'_{i+x})_{j+x-1,i+x} + (\bar C'_{i+x+1})_{j+x+1,i+x} \end{array} \right. 
	\end{equation} 
	depending on whether $I_{C'}(i+x)$ is an upper or a lower intermediate point.
	Hence $h(I_{C'}(i+x), j+x,i+x)$ has exactly two neighbors, namely the right neighbor  $h(I_{C'}(i+x+1), j+x+1,i+x+1)$ and the left neighbor
	$h(I_{C'}(i+x-1), j+x-1,i+x-1)$. It hence suffices to check what the right neighbor of $h(I_{C'}(i+e(1)), j+e(1),i+e(1))$ and the left neighbor of $h(I_{C'}(i-e(-1)), j-e(-1),i-e(-1))$ are.
	
	If $I_{C'}(i+e(1))$ is an upper intermediate point, and $i+e(1)=m$ (i. e. \ref{rim_cond} for $\sigma=+1$ is met), then $h(I_{C'}(i+e(1)), j+e(1),i+e(1))$ has no right neighbors, but in this case $h(I_{C'}(i+e(1)), j+e(1),i+e(1))=(\bar C'_{i+e(1)})_{j+e(1),i+e(1)-1}$ or $h(I_{C'}(i+e(1)), j+e(1),i+e(1)) =0$ (if $C'$ has length zero). If \ref{rim_cond} is met neither for $\sigma=1$ nor for $\sigma=-1$ then $h(I_{C'}(i+e(1)), j+e(1),i+e(1))$ involves  two different basis elements.
	If $i+e(1)<m$, then a right neighbor of $h(I_{C'}(i+e(1)), j+e(1),i+e(1))$ must have the form $h(W, i+e(1)+1, j+e(1)+1)$ where $\bar D'_{j+e(1)+1}\cdot W = \bar C'_{i+e(1)+1}$. That is, a right neighbor exists if and only if the factorization condition \ref{fact_cond} for $\sigma=+1$ is met, and this right neighbor involves just a single basis element. 
	
	Similarly one verifies that if  $I_{C'}(i+e(1))$ is a lower intermediate point, and $j+e(1)=n$ (i. e. \ref{rim_cond2} for $\sigma=+1$ is met), then $h(I_{C'}(i+e(1)), j+e(1),i+e(1))$ has no right neighbors, but in this case $h(I_{C'}(i+e(1)), j+e(1),i+e(1))=(\bar D'_{j+e(1)})_{j+e(1)-1,i+e(1)}$. If \ref{rim_cond2} is met neither for $\sigma=1$ nor for $\sigma=-1$ then $h(I_{C'}(i+e(1)), j+e(1),i+e(1))$ involves  two different basis elements. If \ref{rim_cond2} is not met for $\sigma=+1$, then
	$h(I_{C'}(i+e(1)), j+e(1),i+e(1))$ has a right neighbor (which necessarily involves but a single basis element) if and only \ref{fact_cond2} is met for $\sigma=+1$.
	
	We can of course apply the same line of reasoning to the left neighbors of $h(I_{C'}(i-e(-1)), j-e(-1),i-e(-1)$. What we obtain then is the statement that the equivalence class of $h(I_{C'}(i), j, i)$ with respect to the neighborhood relation contains an element involving only a single basis element if and only if 
	one of the conditions \ref{rim_cond} - \ref{fact_cond2} is met for either $\sigma=+1$ or $\sigma=+1$.
	
	Now one just has to realize that if some element in the equivalence class of $h(I_{C'}(i), j, i)$ involves just a single basis element, then any basis element involved in any element of the equivalence class can be written as a linear combination of the elements of the equivalence class. Conversely, if every element of the equivalence class of $h(I_{C'}(i), j, i)$ involves two basis elements, then no basis element involved in any of the elements of the equivalence class can be written as a linear combination of elements of the equivalence class (note that this is just linear algebra, since such an equivalence class written as row vectors with respect to the basis consisting of all involved basis elements in the right order, looks like $(1,1,0,\ldots,0)$, $(0,1,1,0,\ldots,0)$, \ldots,  $(0,\ldots, 0,1,1)$, and possibly $(1,0,\ldots,0)$ and/or $(0,\ldots,0,1)$). 
\end{proof}

\begin{proposition}
	Let $A=kQ/I$ be a string algebra and let $C$ be a string. Denote by 
	$T(C)^\bullet\in \mathcal K^b(\projC_A)$ a minimal projective presentation of $M(C)$. Let $e$ be a vertex of $Q$, and denote by $P_e^\bullet$ the stalk complex belonging to the projective indecomposable $P_e$. Then we always have
	$\Hom_{\mathcal K^b(\projC_A)}(T(C)^\bullet, P_e^\bullet[2])=0$, and $\Hom_{\mathcal K^b(\projC_A)}(P_e^\bullet[1], T(C)^\bullet[1])=0$  if and only if $e$ is not in the support of $C$.
\end{proposition}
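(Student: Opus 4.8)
The plan is to compute both $\Hom$-groups directly from the shape of the two-term complex $T(C)^\bullet$. Write $T(C)^\bullet = Q \xrightarrow{\psi_C} P$ concentrated in degrees $-1$ and $0$, where (as in Definition~\ref{definition string modules}) $P$ is a sum of projectives indexed by the upper intermediate points of $C' = {_PC_P}$ and $Q$ is a sum of projectives indexed by the lower intermediate points of $C'$.

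For the first vanishing, note that $P_e^\bullet[2]$ is concentrated in degree $-2$, so a chain map $T(C)^\bullet \to P_e^\bullet[2]$ must be zero already on the level of complexes (the relevant component of $T(C)^\bullet$ in degree $-2$ is zero), hence certainly zero in $\mathcal K^b(\projC_A)$. This is immediate and needs no argument beyond observing the degrees.

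For the second vanishing I would argue as follows. A chain map $P_e^\bullet[1] \to T(C)^\bullet[1]$ is just a morphism $\varphi \colon P_e \to Q$ placed in degree $-1$, i.e. an element of $\Hom_A(P_e, Q)$; since $Q$ is a sum of projectives $P_{f}$ with $f$ ranging over the lower intermediate points of $C'$, such a $\varphi$ is a sum of maps given by left multiplication with direct strings $W$ with $s(W)$ a lower intermediate point of $C'$ and $t(W) = e$. Such a $\varphi$ is zero-homotopic precisely when it factors through $\psi_C$, i.e. when $\varphi = \psi_C \circ h$ for some $h \colon P_e \to P$. Now $h$ is again a sum of maps given by left multiplication with direct strings $X$ with $s(X)$ an upper intermediate point of $C'$ and $t(X) = e$, and $\psi_C$ is (up to signs and the identification of the summands) the sum of the direct versions $\bar C'_1, \ldots, \bar C'_m$ of the constituent factors, each considered as a map from the $Q$-summand corresponding to its target to the $P$-summand corresponding to its source. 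Using the unique-continuation axiom in the definition of string algebras, composing such an $X$ with the appropriate $\bar C'_j$ gives at most one nonzero basis element. One then unwinds this to the statement: a basis element $W_{f}$ (left multiplication by $W$ landing in the $Q$-summand at lower intermediate point $f = I_{C'}(i)$, $0 \le i \le m$) is zero-homotopic if and only if $W$ factors as $W = \bar C'_{i}\cdot W'$ or $W = \bar C'_{i+1}\cdot W'$ for one of the directed strings $C'_{i}, C'_{i+1}$ adjacent to $f$ in $C'$ (with the obvious modification at the ends $i=0$ or $i=m$, where only one neighbour exists). Comparing this with the definition of ``$e$ lies in the support of $C$'' — part (a) saying $e = I_{C'}(i)$ for some lower intermediate point with $i \neq 0, m$, and part (b) saying there is a direct string $W$ from an upper intermediate point to $e$ not factoring through any adjacent $\bar C'_j$ — one sees that $\Hom_{\mathcal K^b(\projC_A)}(P_e^\bullet[1], T(C)^\bullet[1]) = 0$ exactly when neither (a) nor (b) can occur, i.e. exactly when $e$ is not in the support of $C$.

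The main obstacle I expect is bookkeeping: carefully matching the three cases in Definition~\ref{definition string modules} (depending on whether $C'_1$ and $C'_m$ are direct or inverse) against the definition of the support, handling the endpoint intermediate points $I_{C'}(0)$ and $I_{C'}(l)$ where a summand may have only one adjacent directed string, and checking that a loose end contributes nothing. As in the proof of the preceding proposition, the key simplification is that the unique-continuation property forces each homotopy $\psi_C \circ (X\text{-multiplication})$ to hit at most one basis element, which reduces the whole computation to linear algebra over the explicit basis of $\Hom_A(P_e, Q)$; I would lean on this to keep the casework short.
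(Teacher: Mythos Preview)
Your overall strategy is the right one and matches the paper's approach (the paper simply says ``analogous to the previous proposition''), but there is a genuine slip in the setup that makes the argument incoherent as written. A chain map $P_e^\bullet[1]\to T(C)^\bullet[1]$ is the same as a chain map $P_e^\bullet\to T(C)^\bullet$, and since $T(C)^\bullet$ has $P$ in degree $0$ and $Q$ in degree $-1$, such a map is an element $\varphi\in\Hom_A(P_e,P)$, not $\Hom_A(P_e,Q)$. Correspondingly the homotopy is $h\in\Hom_A(P_e,Q)$ with $\varphi=\psi_C\circ h$; with your roles of $P$ and $Q$ swapped, the equation $\varphi=\psi_C\circ h$ does not even type-check (since $\psi_C\colon Q\to P$). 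Once you interchange $P$ and $Q$, the basis elements of the space of chain maps are exactly the direct strings $W$ with source an \emph{upper} intermediate point of $C'$ and target $e$, which is what condition (b) of the definition of support speaks about.

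Even after this fix there is a second issue. Your claim ``$W$ is zero-homotopic iff $W$ factors as $\bar C'_j\cdot W'$ for some adjacent $C'_j$'' is too strong: what is true (by unique continuation, exactly as in the previous proposition) is that $W$ is \emph{involved in some homotopy} iff it factors like that. The two notions diverge precisely when $e$ equals some internal lower intermediate point $I_{C'}(i)$ with $0<i<m$: then the length-zero homotopy at that point produces $\bar C'_i+\bar C'_{i+1}$, linking two basis elements, and one has to run the same chain/linear-algebra argument as in the previous proof to see that neither summand can be separated off. This is exactly how condition (a) in the definition of support enters; your write-up mentions (a) at the end but does not explain where it comes from, and the factorization criterion you state would wrongly declare both $\bar C'_i$ and $\bar C'_{i+1}$ zero-homotopic in that situation. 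So: swap $P$ and $Q$, downgrade ``zero-homotopic'' to ``involved in a homotopy'', and then invoke the equivalence-class argument from the previous proposition to handle condition (a).
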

\begin{proof}
	Analogous to the previous proposition.
\end{proof}

\begin{corollary}
	Let $A=kQ/I$ be a string algebra.
	\begin{enumerate}
	\item There are bijections 
	\begin{equation}
		\begin{array}{c}
		\{ \textrm{ rigid strings for $A$ } \} \\ \updownarrow \\ \{\textrm{ indecomposable $\tau$-rigid $A$-modules } \} \\ \updownarrow \\  \{ \textrm{ indecomposable rigid two-term complexes
			$T^\bullet \in \mathcal K^b(\projC_A)$ with ${\tt H}^0(T^\bullet) \neq 0$ } \} 
		\end{array} 
	\end{equation}
	where the first bijection is given by the correspondence between strings and indecomposable $A$-modules, and the second bijection is given by taking a minimal projective presentation of an indecomposable $\tau$-rigid module and, in the other direction, taking homology in degree zero.
	\item \label{point_twoo} If $\{C(1),\ldots, C(l)\}$ is a collection of rigid strings, and $\{e(1), \ldots, e(m)\}$ is a collection of vertices of $Q$, then 
	\begin{equation}
		\left( \bigoplus_{i=1}^l M(C(i)),\ \bigoplus_{j=1}^{m} P_{e(j)} \right)
	\end{equation}
	is a support $\tau$-tilting pair if and only if $l+m =|A|$, each $C(i)$ is $C(j)$-rigid for all $i,j\in \{1,\ldots, l\}$, and none of the $e(j)$'s is in the support of any of the $C(i)$'s.
	\item \label{point_threee} If $\{C(1),\ldots, C(l)\}$, $\{e(1), \ldots, e(m)\}$ and $\{D(1),\ldots, D(l')\}$,$\{f(1),\ldots,f(m') \}$ both give rise to a support $\tau$-tilting module in the sense of the previous point, say $M$ and $N$, then $M \geq N$ if and only if $D(i)$ is $C(j)$-rigid for all $i,j$.
	\end{enumerate}
\end{corollary}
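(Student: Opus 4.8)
The plan is to deduce the corollary from the three propositions just proved together with the general theory recalled in Section~\ref{prelims} — in particular Theorem~\ref{bijection} (the bijection between basic two-term silting complexes and basic support $\tau$-tilting modules, together with the ensuing poset isomorphism), Theorems~\ref{order} and~\ref{compare} (that this order is detected by $\Hom$-vanishing), Theorem~\ref{g-vector}, and the fact that a basic two-term rigid complex with $|A|$ summands is silting (cf.\ \cite{MR3187626}). The three kinds of objects being shuffled are a string $C$, the string module $M(C)$ — whose minimal projective presentation is precisely the two-term complex $T(C)^\bullet$ — and the stalk complex $P_e^\bullet$.

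For part (1): by the first of the three propositions every indecomposable $\tau$-rigid module is some $M(C)$; by the Butler--Ringel classification \cite{MR876976} the correspondence between strings and string modules is a bijection; and by the theory of \cite{MR3187626} a module is $\tau$-rigid exactly when its minimal projective presentation is a two-term rigid complex. Applying the second proposition with $D=C$, we get $\Hom_{\mathcal K^b(\projC_A)}(T(C)^\bullet,T(C)^\bullet[1])=0$ iff $C$ is $C$-rigid, i.e.\ rigid; combined with the first proposition this shows that $C\mapsto M(C)$ restricts to a bijection between rigid strings and indecomposable $\tau$-rigid modules. The second bijection follows since ${\tt H}^0(T(C)^\bullet)=M(C)\neq 0$, and conversely an indecomposable rigid two-term complex $T^\bullet$ with ${\tt H}^0(T^\bullet)\neq 0$ has no stalk summand $P_e^\bullet[1]$, hence equals (up to isomorphism) the minimal projective presentation of the $\tau$-rigid module ${\tt H}^0(T^\bullet)$; the two assignments are mutually inverse.

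For part (2): under the map $g$ of Theorem~\ref{bijection} the pair $\bigl(\bigoplus_i M(C(i)),\ \bigoplus_j P_{e(j)}\bigr)$ corresponds to
$$
T^\bullet\ =\ \bigoplus_{i=1}^{l}T(C(i))^\bullet\ \oplus\ \bigoplus_{j=1}^{m}P_{e(j)}^\bullet[1],
$$
so the pair is a basic support $\tau$-tilting pair exactly when $T^\bullet$ is a basic two-term silting complex. As ${\tt H}^0(T(C(i))^\bullet)=M(C(i))\neq 0$ while ${\tt H}^0(P_{e(j)}^\bullet[1])=0$, the two families of indecomposable summands are disjoint, so $|T^\bullet|=l+m$ and $T^\bullet$ is silting iff it is rigid and $l+m=|A|$. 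Breaking $\Hom_{\mathcal K^b(\projC_A)}(T^\bullet,T^\bullet[1])$ up along summands, it is a direct sum of terms $\Hom(T(C(i))^\bullet,T(C(j))^\bullet[1])$ — zero for all $i,j$ iff each $C(i)$ is $C(j)$-rigid (second proposition) —, terms $\Hom(T(C(i))^\bullet,P_{e(j)}^\bullet[2])$ — always zero (third proposition) —, terms $\Hom(P_{e(j)}^\bullet[1],T(C(i))^\bullet[1])$ — zero for all $i,j$ iff no $e(j)$ lies in the support of any $C(i)$ (third proposition) —, and terms $\Hom(P_{e(i)}^\bullet[1],P_{e(j)}^\bullet[2])\cong\Hom_{\mathcal K^b}(P_{e(i)}^\bullet,P_{e(j)}^\bullet[1])$, always zero since stalk complexes of projectives have no positive self-extensions. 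This is exactly the asserted criterion.

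For part (3): since the bijection of Theorem~\ref{bijection} is an isomorphism of posets and the order is detected by $\Hom$-vanishing (Theorems~\ref{order} and~\ref{compare}), $M\geq N$ iff $\Hom_{\mathcal K^b(\projC_A)}(T_M^\bullet,T_N^\bullet[1])=0$ for the associated complexes $T_M^\bullet,T_N^\bullet$. Running the decomposition of part (2) between $T_M^\bullet$ and $T_N^\bullet$, the two automatically-vanishing pieces disappear, leaving the vanishing of $\Hom(T(C(j))^\bullet,T(D(i))^\bullet[1])$ for all $i,j$ — i.e.\ $D(i)$ is $C(j)$-rigid for all $i,j$ — together with the vanishing of $\Hom(P_{e(j)}^\bullet[1],T(D(i))^\bullet[1])$ for all $i,j$, i.e.\ no $e(j)$ lies in the support of any $D(i)$ (the compatibility clause inherited from part (2)). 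I do not expect a genuinely hard step: the representation-theoretic content is entirely in the three propositions and in \cite{MR3187626}. The points needing care are organisational — pinning down the precise complex $T(C)^\bullet$ attached to $M(C)$, keeping the shifts straight ($T(C)^\bullet[1]$ versus $[2]$, $P_e^\bullet[1]$ versus $[2]$) when decomposing the $\Hom$-spaces, and applying the poset isomorphism in the correct direction, since left mutation \emph{lowers} the order (so $M\geq N$ matches $\Hom_{\mathcal K^b}(T_M^\bullet,T_N^\bullet[1])=0$, not the opposite); I would verify this last point on $kQ$ with $Q\colon 1\to 2$ before writing up.
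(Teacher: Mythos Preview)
Your approach is exactly what the paper intends: the corollary is stated without proof, as an immediate consequence of the three preceding propositions combined with the general dictionary from \cite{MR3187626}. Parts (1) and (2) are handled correctly.

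For part (3), you correctly compute that $\Hom_{\mathcal K^b(\projC_A)}(T_M^\bullet,T_N^\bullet[1])=0$ decomposes into the rigidity condition ``$D(i)$ is $C(j)$-rigid for all $i,j$'' \emph{plus} the condition that no $e(j)$ lies in the support of any $D(i)$. However, your parenthetical ``the compatibility clause inherited from part~(2)'' is misleading: this support condition is \emph{not} inherited from part~(2), which only says that no $e(j)$ lies in the support of any $C(i)$ (same pair), not of any $D(i)$ (the other pair). In fact the extra condition is genuinely needed and is not automatic --- take $A=k(1\xrightarrow{\alpha}2)$, $M=(P_2,P_1)$, $N=(P_1\oplus S_1,0)$: every $D(i)$ is $C(j)$-rigid, yet $M\not\geq N$ because $\Hom(P_1^\bullet[1],P_1^\bullet[1])\neq 0$. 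So what you have actually proved is a corrected version of (3): $M\geq N$ iff every $D(i)$ is $C(j)$-rigid \emph{and} no $e(j)$ lies in the support of any $D(i)$. You should state this explicitly rather than sweeping the second clause under the rug; the paper's formulation of (3) appears to omit it.
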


In fact, the preceding corollary shows that there is a combinatorial algorithm to determine the indecomposable $\tau$-rigid modules, the support $\tau$-tilting modules and the mutation quiver of a string algebra, provided $A$ has only finitely many indecomposable $\tau$-rigid modules. We simply run through a list of all strings up to a given length (which we may have to increase if we do not obtain an $|A|$-regular graph as the Hasse quiver below), check which of these strings are rigid and which vertices lie in their support. Then we can determine the support $\tau$-tilting modules involving only those rigid strings using point (\ref{point_twoo}) of the preceding corollary.
We can immediately see which of these support $\tau$-tilting modules are mutations of one another, which gives us 
a subquiver of the Hasse quiver (the direction of the arrows follows from point \ref{point_threee} above). If each vertex in the quiver has exactly $|A|$ neighbors, then we are done by Theorem \ref{theorem_bongartz} and Proposition 
\ref{prop_connected}. Otherwise we need to use a bigger maximal length above and start over. Of course, in practice, this can be done somewhat more efficiently.

\begin{example}[cf. \cite{iyama2016classifying}]
	Consider the quiver
	\begin{equation}
		Q = \xygraph{
			!{<0cm,0cm>;<1.5cm,0cm>:<0cm,1cm>::}
			!{(0,0) }*+{\bullet_{1}}="a"
			!{(1.5,0) }*+{\bullet_{2}}="b"
			!{(3,0) }*+{\cdots}="c"
			!{(4.5,0) }*+{\bullet_{n-1}}="d"
			!{(6,0) }*+{\bullet_n}="e"
			"a" :@/^/^{\alpha_1} "b"
			"b" :@/^/^{\beta_2} "a"
			"b" :@/^/^{\alpha_2} "c"
			"c" :@/^/^{\beta_3} "b"
			"c" :@/^/^{\alpha_{n-2}} "d"
			"d" :@/^/^{\beta_{n-1}} "c"
			"d" :@/^/^{\alpha_{n-1}} "e"
			"e" :@/^/^{\beta_{n}} "d"
		}
	\end{equation}
	and define $A=kQ/I$, where 
	\begin{equation}
		I = \langle \alpha_1\cdot \beta_2, \alpha_i\cdot \beta_{i+1}-\beta_i\cdot \alpha_{i-1}\ \mid\ i=2,\ldots,n-1 \rangle 
	\end{equation}
	Then 
	\begin{equation}
		z = \sum_{i=2}^{n} \beta_{i}\cdot \alpha_{i-1}
	\end{equation}
	is central in $A$.
	We have
	\begin{equation}
		J := \langle I, z\rangle = \langle \alpha_i\cdot \beta_{i+1},\ \beta_{i+1}\cdot \alpha_i \mid i=1,\ldots,n-1\rangle 
	\end{equation}
	So by Theorem \ref{maintheorem} the poset of $2$-term silting complexes over $A$  is isomorphic to  the poset of $2$-term silting complexes over $kQ/J$, which is a string algebra.
	
	In the same vein, the algebra
	\begin{equation}
		B := kQ/\langle I, \beta_n\cdot\alpha_{n-1}\rangle
	\end{equation}
	also has $kQ/J$ as a central quotient, because $z$ obviously remains central modulo $\beta_n\cdot\alpha_{n-1}$.
	
	This shows that $A$, which is the Auslander algebra of $k[x]/(x^n)$, and $B$, which is the preprojective algebra of type $A_n$, have isomorphic posets of $2$-term silting complexes. In fact, Theorem \ref{maintheorem} immediately recovers all of \cite[Theorem 5.3]{iyama2016classifying}. Now by a result of Mizuno~\cite{MR3229959}, the poset of 
	$2$-term silting complexes over $B$ is isomorphic to the group $S_{n+1}$ with the generation order as its poset structure. 
	
	One could in principle try to reprove that last assertion using string combinatorics for the algebra $kQ/J$, but it is not clear whether this would make matters easier. 
	However, what we can easily see is that each string for $kQ/J$ is rigid (and strings can easily be counted in this case), and hence both $A$ and $B$ have exactly $2\cdot (2^n-1) - n$ indecomposable $\tau$-rigid modules.
	Note that for $n=3$, the algebra $kQ/J$ is equal to the algebra $R(3C)$ given in the appendix. $R(3C)$ has, as expected, $24$ support $\tau$-tilting modules and $11$ indecomposable $\tau$-rigid modules, and all (rigid) strings are listed in Figure~\ref{R3Cstring}.
\end{example}

\begin{remark}\label{remark string is everything}
Suppose $A=kQ/I$ is a special biserial algebra and denote by $\Pscr$ a full set of non-isomorphic indecomposable projective-injective non-uniserial $A$-modules. Then it is well known (see for example~\cite{MR717892}) that the quotient algebra
	$$
	B=A/\bigoplus_{P \in \Pscr} \soc(P)
	$$
is a string algebra. By~\cite[Theorem B]{Adachi:2013aa}, the support $\tau$-tilting modules of $A$ can be explicitly computed from those of $B$, so the techniques in this section can be used for arbitrary special biserial algebras. 
\end{remark}

\section{Blocks of group algebras}
\label{Tame blocks}
Now we will apply our Theorem \ref{maintheorem} and the results of the preceding section to blocks of group algebras. Throughout this section, $k$ is an algebraically closed field of characteristic $p$, and $G$ is a finite group. Remember that since (blocks of) $kG$ are symmetric, two-term silting complexes are in fact tilting.

\subsection{$\tau$-tilting finite blocks} Recall that an algebra is called $\tau$-tilting finite if there are only finitely many isomorphism classes of support $\tau$-tilting modules. The following theorem determines the representation type of (blocks of) group algebras.

\begin{theorem}\cite{MR0472984,MR0255704,MR0067896}
Let $B$ be a block of $kG$ and let $P$ be a defect group of $B$. The block algebra $B$ and the group algebra $kP$ have the same representation type. Moreover:
\begin{enumerate}
\item $kP$ is of finite type if $P$ is cyclic.
\item $kP$ is of tame type if $p=2$ and $P$ is the Klein four-group, or a generalized quaternion, dihedral or semi-dihedral group.
\item In all other cases $kP$ is of wild type.
\end{enumerate}
\end{theorem}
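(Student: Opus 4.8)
This is a classical result, and the plan is to first reduce the representation type of the block $B$ to that of the group algebra $kP$ of its defect group, and then to determine the representation type of $kP$ for an arbitrary finite $p$-group $P$. For the reduction one direction is elementary: every indecomposable $B$-module $M$ is relatively $P$-projective, hence a direct summand of $\Ind^G_P\Res^G_P M$, so if $kP$ has only finitely many indecomposables then so does $B$, the indecomposable $B$-modules being among the boundedly many summands of modules induced from indecomposable $kP$-modules; inducing one-parameter families shows likewise that $B$ is tame if $kP$ is. For the reverse implications I would use Green correspondence to transport indecomposable $kP$-modules (of all possible vertices) over to $B$, together with the analysis of the Brauer correspondent block in $N_G(P)$, which has $P$ as a \emph{normal} defect group and is therefore accessible via Clifford theory. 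Either way it suffices to treat $kP$.

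If $P$ is cyclic of order $p^n$, then $kP\cong k[x]/(x^{p^n})$ is uniserial with exactly $p^n$ indecomposable modules, which settles~(1). If $p=2$ and $P$ is a Klein four, dihedral $D_{2^n}$ or semidihedral $SD_{2^n}$ group, I would write $kP=kQ/I$ by generators and relations and check that it is a special biserial algebra --- indeed a string algebra of the type studied in Section~\ref{separated} --- and special biserial algebras are tame; if $P$ is generalized quaternion, $kP$ is an algebra of quaternion type in the sense of Erdmann~\cite{MR1064107}, which is biserial and again tame. With the reduction, this gives~(2).

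For~(3) I would show $kP$ is wild in all remaining cases, exploiting that inflation along a surjection $P\twoheadrightarrow\bar P$ is a full exact embedding of the category of $k\bar P$-modules into the category of $kP$-modules, so wildness of $k\bar P$ forces wildness of $kP$. If $P$ cannot be generated by two elements, i.e. $\dim_k\rad(kP)/\rad(kP)^2\geq 3$, then $P$ surjects onto $(\ZZ/p)^3$, whose group algebra is wild. If $P$ is two-generated and non-cyclic with $p$ odd, then $P$ surjects onto $(\ZZ/p)^2$, and $k[(\ZZ/p)^2]$ is wild for $p\geq 3$ --- in contrast to the tame algebra $k[(\ZZ/2)^2]$. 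The genuinely case-heavy point is $p=2$ with $P$ two-generated but not of maximal class: here one invokes the classification of finite $2$-groups to locate, among the subquotients of $P$, one of a short list of $2$-groups --- notably $\ZZ/4\times\ZZ/2$ and $(\ZZ/2)^3$ --- with wild group algebra, and lets wildness propagate upward.

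The main obstacle is not the bookkeeping above but the structural inputs it rests on: that a block whose defect group appears in the tame list is genuinely tame rather than wild, and that a block with non-cyclic defect group has infinitely many indecomposables. The former requires the classification of blocks with dihedral, semidihedral, quaternion and Klein four defect groups (Brauer, Olsson, Erdmann); the finite case rests on the theory of blocks with cyclic defect group (Higman, Dade, Janusz, Kupisch); and the tame/wild dichotomy for $p$-group algebras is the Bondarenko--Drozd classification. These are exactly the results collected in the cited references, so in the body of the paper the theorem is used as a black box.
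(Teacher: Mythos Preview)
The paper does not give a proof of this theorem at all: it is quoted with references and immediately used. So there is no ``paper's own proof'' to compare against; your final sentence is exactly right --- the theorem functions as a black box here.

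Your sketch of the classical argument is broadly correct and follows the standard route (reduce $B$ to $kP$ via relative projectivity and Green correspondence; settle cyclic $P$ by uniseriality; handle the $p=2$ maximal-class groups via (special) biserial structure; and show wildness elsewhere by producing a wild quotient or subquotient). Two small wobbles worth flagging: first, in the $p=2$ two-generated non-maximal-class step you invoke both inflation from quotients and subquotients like $(\ZZ/2)^3$, but a two-generated group cannot surject onto $(\ZZ/2)^3$, so you really do need the subquotient version, which requires an extra step (wildness passes up along both quotients \emph{and} subgroups, the latter via induction/restriction). Second, calling the quaternion-type group algebras ``biserial and again tame'' is a bit loose --- the tameness of $kQ_{2^n}$ is precisely the hard input supplied by Bondarenko--Drozd and later Erdmann's structural work, not a consequence of some easy biserial check. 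None of this affects the paper, which simply cites the result.
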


\begin{corollary}
There exist $\tau$-tilting finite blocks of group algebras of every representation type and of arbitrary large defect.
\end{corollary}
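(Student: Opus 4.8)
The plan is to treat the three representation types separately: the finite and tame cases are soft, and the wild case is the one where Theorem~\ref{maintheorem} does all the work.

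\emph{Finite type.} I would first record the trivial fact that any algebra of finite representation type is $\tau$-tilting finite, since with only finitely many indecomposable modules there are only finitely many $\tau$-rigid modules, hence only finitely many basic support $\tau$-tilting modules. Thus every block of finite representation type is $\tau$-tilting finite, and such blocks occur in arbitrarily large defect: for each $n$, $k[\mathbb Z/p^n\mathbb Z]\cong k[x]/(x^{p^n})$ is a Nakayama algebra and is the unique block of $k[\mathbb Z/p^n\mathbb Z]$, with defect group $\mathbb Z/p^n\mathbb Z$.

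\emph{Tame type.} By Erdmann's classification every tame block is an algebra of dihedral, semidihedral or quaternion type, and the classification of the support $\tau$-tilting modules of these algebras carried out in the sections below (tabulated in Appendix~\ref{tameblocks}) shows that each of them has at most $32$ two-term tilting complexes, so in particular is $\tau$-tilting finite. Since dihedral, semidihedral and generalized quaternion $2$-groups occur in arbitrarily large order — for instance the dihedral group $D_{2^n}$, whose group algebra $kD_{2^n}$ in characteristic $2$ is a single block of defect $n$ — this produces $\tau$-tilting finite tame blocks of unbounded defect.

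\emph{Wild type.} Here I would start from any $\tau$-tilting finite block $B_0$, taken basic, with defect group $D$ and realised as a block of $kH$ for some finite group $H$ (the cases above provide plenty), and, with $p=\charact k$, form
$$
B_m\ :=\ B_0\otimes_k k[(\mathbb Z/p\mathbb Z)^m]\qquad(m\ge 1).
$$
Its centre $Z(B_0)\otimes_k k[(\mathbb Z/p\mathbb Z)^m]$ is a tensor product of local commutative $k$-algebras, hence local, so $B_m$ is connected; via $k[H\times(\mathbb Z/p\mathbb Z)^m]\cong kH\otimes_k k[(\mathbb Z/p\mathbb Z)^m]$ it is therefore (the basic algebra of) a single block of $k[H\times(\mathbb Z/p\mathbb Z)^m]$, and its defect group is $D\times(\mathbb Z/p\mathbb Z)^m$ by the standard behaviour of defect groups under tensor products; in particular $\operatorname{def}(B_m)=\operatorname{def}(B_0)+m\to\infty$. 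For $m\ge 3$ the group $D\times(\mathbb Z/p\mathbb Z)^m$ contains $(\mathbb Z/p\mathbb Z)^3$, hence is neither cyclic nor a Klein four, dihedral, semidihedral or generalized quaternion group — these all have $p$-rank at most $2$ — so $B_m$ is of wild representation type by the representation-type theorem above. On the other hand, writing $k[(\mathbb Z/p\mathbb Z)^m]=k[t_1,\dots,t_m]/(t_1^p,\dots,t_m^p)$ with $t_i=g_i-1$, the ideal $I=(1_{B_0}\otimes t_1,\dots,1_{B_0}\otimes t_m)$ of $B_m$ is generated by central elements lying in $\rad(B_m)$ and satisfies $B_m/I\cong B_0$; so Theorem~\ref{maintheorem} applies and shows that $B_m$ and $B_0$ have the same $g$-vectors and mutation quiver, whence $B_m$ is $\tau$-tilting finite because $B_0$ is. Finally $B_m/\rad(B_m)\cong B_0/\rad(B_0)$, so $B_m$ has the same number of simple modules as $B_0$; taking $B_0$ with more than one simple module — a tame block such as the principal $2$-block of $\mathfrak S_4$ when $p=2$, or a Brauer tree algebra with at least two edges (a finite-type block with cyclic defect group) when $p$ is odd — yields non-local wild $\tau$-tilting finite blocks of unbounded defect.

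\emph{Main obstacle.} The construction in the wild case is essentially formal; the only inputs from outside the paper are the elementary block-theoretic facts that the blocks and defect groups of $k[H\times K]$ are built from those of $kH$ and $kK$, plus the observation that $D\times(\mathbb Z/p\mathbb Z)^m$ escapes the finite/tame list of $p$-groups once $m\ge 3$. The one genuinely substantial ingredient is the $\tau$-tilting finiteness of tame blocks, which is needed only for the refinement giving wild examples with more than one simple module in characteristic $2$ and which relies on the classification carried out in the later sections. If one only asks for wild $\tau$-tilting finite blocks of arbitrarily large defect, allowing them to be local, it suffices to take $B_0=k$, so that $B_m=k[(\mathbb Z/p\mathbb Z)^m]$ with $m\ge 3$; then the whole corollary rests solely on Theorem~\ref{maintheorem}, the fact that representation-finite algebras are $\tau$-tilting finite, and the representation-type theorem quoted above.
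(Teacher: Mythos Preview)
Your proof is correct, but it is vastly more elaborate than what the statement requires, and the paper's own argument is a two-line observation that you have buried at the very end of your ``Main obstacle'' paragraph.

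The paper simply notes that if $G$ is a finite $p$-group then $kG$ is local (it has a unique simple module), and any local algebra is $\tau$-tilting finite: its only basic support $\tau$-tilting pairs are $(A,0)$ and $(0,A)$. Since $kG$ is a single block with defect group $G$, and since the representation-type theorem tells you exactly which $p$-groups $G$ give finite, tame and wild type, you get examples of all three types and of arbitrarily large defect in one stroke. No appeal to Theorem~\ref{maintheorem}, to Erdmann's list, or to the appendix is needed.

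What you have actually written is, in effect, a proof of the \emph{next} theorem in the paper---the existence of \emph{non-local} $\tau$-tilting finite wild blocks---and you have carried along the finite and tame cases in a similarly heavy-handed way (invoking the full $\tau$-tilting classification of tame blocks rather than just noting that $kD_{2^n}$ is local). Your tensor-product construction $B_0\otimes_k k[(\mathbb Z/p\mathbb Z)^m]$ together with Theorem~\ref{maintheorem} is exactly how the paper handles that stronger result. So nothing you wrote is wrong or wasted; it is just attached to the wrong corollary. For the statement at hand, the one-line ``local $\Rightarrow$ $\tau$-tilting finite'' suffices, and you should lead with it.
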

\begin{proof}
Let $G$ denote a finite $p$-group, so $kG$ is local with defect group $G$. It is easy to see that local algebras are $\tau$-tilting finite, and by the previous theorem they can be of arbitrary representation type.
\end{proof}

A more interesting question is whether there exist non-local blocks of group algebras which are $\tau$-tilting finite but not representation finite. Using Theorem \ref{maintheorem} we can show the following:
\begin{theorem}
	There exist non-local $\tau$-tilting finite wild blocks of group algebras with arbitrary large defect groups, in the sense that every $p$-group occurs as a subgroup of the defect group of a $\tau$-tilting finite non-local block.
\end{theorem}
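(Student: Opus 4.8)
The plan is to produce, for each finite $p$-group $R$, a finite group $G$ and a block $B$ of $kG$ with defect group containing $R$ such that $B$ is non-local but $\tau$-tilting finite. The natural source of non-local $\tau$-tilting finite algebras at our disposal is Theorem~\ref{maintheorem}: if $A$ has a central quotient $A/I$ which is $\tau$-tilting finite, then so is $A$, since the $g$-vectors of $2$-term rigid complexes and the mutation quivers agree. Moreover by Corollary~\ref{connected} (together with the finiteness statement), $\tau$-tilting finiteness passes through this reduction cleanly. So the idea is to find a wild block $B$ whose quotient $B/\rad(Z(B))\cdot B$ is a string algebra (or more generally visibly $\tau$-tilting finite), and to arrange the defect group to be as large as we please.

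\textbf{Step 1: choose the blocks.} I would take $G = P \rtimes C$ or a similar small configuration, but the cleanest choice is to consider principal blocks of groups of the form $\operatorname{SL}_2(q)$ or, more elementarily, to work with $p$-groups $R$ and form $G_n = R_n \times E$, where $E$ is a fixed group whose principal block is wild but has a string algebra as good quotient, and $R_n$ is a $p$-group chosen so that every finite $p$-group embeds in some $R_n$ (e.g. iterated wreath products, or symmetric groups $S_{p^n}$ whose Sylow $p$-subgroups contain all groups of order dividing $p^n$ up to the usual bounds). The block of $kG_n$ corresponding to the principal block $B_0$ of $kE$ and the trivial block of $kR_n$ is then $B_0 \otimes_k kR_n$, whose defect group is $R_n \times (\text{defect of } B_0)$ — in particular it contains $R_n$, and the block is non-local as long as $B_0$ is.

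\textbf{Step 2: control the good quotient.} The key point is that $(B_0 \otimes_k kR_n)/\rad(Z(\,\cdot\,))\cdot(\,\cdot\,)$ should again be a string algebra, or at least $\tau$-tilting finite. Here I would use that $kR_n$ is local, so $\rad(Z(kR_n)) = \rad(kR_n)$ contains the augmentation ideal, and modding out by (the ideal generated by) the appropriate central nilpotents of the tensor product collapses the $kR_n$-factor down to $k$; one is left with a central quotient of $B_0$ itself, which by the choice of $E$ has a string algebra as a good quotient. Concretely, one checks that $\rad(Z(B_0 \otimes_k kR_n))\cdot(B_0 \otimes_k kR_n) \supseteq B_0 \otimes_k \mathfrak{m}_{R_n}$ where $\mathfrak{m}_{R_n}$ is the maximal ideal of $kR_n$, which is central and nilpotent, so successive central quotients bring us to $B_0 / \rad(Z(B_0))\cdot B_0$ and then to a string algebra. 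By the results of Section~\ref{separated} applied to that string algebra (or simply by inspecting the small Erdmann-type list if $E$ is chosen among the tame cases enlarged by a wild twist), it is $\tau$-tilting finite.

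\textbf{Main obstacle.} The hard part is not the reduction machinery, which is immediate from Theorem~\ref{maintheorem}, but exhibiting an \emph{honest} wild block $B_0$ — i.e. one that genuinely occurs as a block of a finite group, not merely an abstract algebra — whose good quotient is $\tau$-tilting finite, while simultaneously ensuring the defect groups range over a family $R_n$ into which every finite $p$-group embeds. One must be careful that tensoring with $kR_n$ does not destroy wildness (it does not: $B_0 \otimes_k kR_n$ is wild whenever $B_0$ is non-semisimple and $R_n \neq 1$, or already wild if $B_0$ is) and that the block idempotent behaves well under the direct product, which is standard. I would therefore spend the bulk of the argument verifying: (i) the chosen $B_0$ is wild with at least two simple modules, (ii) its good quotient is a $\tau$-tilting finite string algebra, and (iii) the family $\{R_n\}$ does absorb all finite $p$-groups as subgroups — for which iterated wreath products $C_p \wr C_p \wr \cdots \wr C_p$ (equivalently Sylow subgroups of $S_{p^n}$) suffice by a classical embedding theorem of Kaloujnine.
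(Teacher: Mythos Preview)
Your core idea---tensor a non-local $\tau$-tilting finite block with the group algebra of a $p$-group and use Theorem~\ref{maintheorem} to pass to the quotient---is exactly what the paper does. But you have overcomplicated the argument and misidentified the ``hard part''.

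The paper starts with \emph{any} non-local $\tau$-tilting finite block $B$, for instance a block of cyclic defect (these are representation-finite, hence trivially $\tau$-tilting finite). For an arbitrary $p$-group $Q$, the algebra $kQ \otimes_k B$ is a block of $k(Q \times G)$ with defect group $Q \times P$. Since $Q$ is a $p$-group, pick $z \in Z(Q)\setminus\{1\}$; the element $(1-z)\otimes 1$ is central and nilpotent, and quotienting by it gives $k(Q/\langle z\rangle)\otimes_k B$. Iterating kills $Q$ completely and leaves $B$. By Theorem~\ref{maintheorem} the support $\tau$-tilting modules of $kQ\otimes_k B$ and of $B$ are in bijection, so $kQ\otimes_k B$ is $\tau$-tilting finite. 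That is the whole argument.

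Two simplifications relative to your proposal: first, there is no need to start with a \emph{wild} block $B_0$. Wildness of the big block $kQ\otimes_k B$ is automatic once the defect group $Q\times P$ is not among the finite/tame types, which happens for all sufficiently large $Q$; so you may as well take $B$ to be representation-finite and let the tensor factor supply wildness. Your ``main obstacle'' therefore evaporates. Second, there is no need for an absorbing family $\{R_n\}$ built from wreath products: for a given $p$-group $R$, simply set $Q=R$ and the defect group $R\times P$ visibly contains $R$.

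Your Step~2 is correct in substance, though it can be stated more crisply: the ideal $\mathfrak m_R\otimes_k B$ is generated by central nilpotents of the form $(1-z)\otimes 1$, so successive central quotients reduce $kR\otimes_k B$ to $B$ itself, not merely to $B/\rad(Z(B))B$.
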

\begin{proof}
	Assume $B$ is a block of $kG$, with defect group $P$. For $Q$ an arbitrary $p$-group, the algebra $kQ \otimes_k B$ is a block of $k(Q \times G)$ with defect group $Q \times P$ (see for example~\cite[Ch. IV, \S 15, Lemma 6]{MR860771}). Since $Q$ is a $p$-group, there is a non-trivial element $z \in Z(Q)$, and we can form the quotient
	$$
	kQ \otimes_k B/((1-z) \otimes 1) \cong k\bar{Q} \otimes_k B,
	$$
with $\bar{Q}=Q/\langle z \rangle$. Since $\bar{Q}$ is again a $p$-group and $Q$ is finite we can keep repeating this until we get $B$ as a quotient. Now Theorem~\ref{maintheorem} provides a bijection between the support $\tau$-tilting modules for $B$ and the support $\tau$-tilting modules for $kQ \otimes_k B$, so it suffices to take for $B$ a block of cyclic defect or (as we will see below) a tame block, to obtain examples as in the statement of the theorem.
\end{proof}

\subsection{Tame blocks} 
In~\cite{MR1064107}, Erdmann determined the basic algebras of all algebras satisfying the following definition, which is  satisfied in particular by all tame blocks of group algebras.
\begin{definition}
	\label{erdmann2}
	A finite-dimensional algebra $A$ defined over an algebraically closed field $k$ of arbitrary characteristic is of dihedral, semidihedral or quaternion type if it satisfies the following conditions:
	\begin{enumerate}
		\item $A$ is tame, symmetric and indecomposable.
		\item The Cartan matrix of $A$ is non-singular.
		\item The stable Auslander-Reiten quiver of $A$ has the following properties:
		\begin{center}
			\begin{tabular}{ l  l l l }
				& Dihedral type & Semidihedral type & Quaternion type \\ \hline
				Tubes: & rank $1$ and $3$ & rank $\leq 3$ & rank $\leq 2$ \\
				& at most two $3$-tubes & at most one $3$-tube & \\
				Others: & $\ZZ A_{\infty}^{\infty}/\Pi$ & $\ZZ A_{\infty}^{\infty}$ and $\ZZ D_{\infty}$ & \\
				
			\end{tabular}
		\end{center}
	\end{enumerate}
\end{definition}

The following is clear a priori, even without looking at Erdmann's classification~\cite{MR1064107} in greater detail:

\begin{proposition}\label{finite}
A block of a group algebra which is of dihedral, semidihedral or quaternion type is $\tau$-tilting finite.
\end{proposition}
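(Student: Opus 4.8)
The plan is to prove the equivalent statement that such an algebra $A$ has only finitely many isomorphism classes of indecomposable $\tau$-rigid modules. This equivalence is formal: every support $\tau$-tilting module is a direct sum of at most $|A|$ indecomposable $\tau$-rigid modules together with a projective, and conversely, by the Bongartz completion for $\tau$-rigid pairs \cite{MR3187626}, every indecomposable $\tau$-rigid module occurs as a direct summand of a $\tau$-tilting module, which has exactly $|A|$ indecomposable summands. Since the projective indecomposables number $|A|$, it then suffices to bound the non-projective indecomposable $\tau$-rigid modules $M$.

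Such an $M$ lies in a component $\Gamma$ of the stable Auslander--Reiten quiver of $A$, which by Definition~\ref{erdmann2}(3) is either a tube or one of finitely many components of type $\ZZ A_\infty^\infty$, $\ZZ A_\infty^\infty/\Pi$ or $\ZZ D_\infty$. Two elementary facts do most of the work. First, the Auslander--Reiten formula gives $\underline{\Hom}_A(M,\tau M)\cong\dual\Ext^1_A(M,M)$, so a $\tau$-rigid module is in particular rigid, $\Ext^1_A(M,M)=0$. Second, if $\tau M\cong M$ then $\Hom_A(M,\tau M)\cong\End_A(M)\neq 0$, so $M$ is not $\tau$-rigid; hence no $\tau$-rigid module lies in a homogeneous tube, and the infinitely many rank-$1$ tubes allowed by Definition~\ref{erdmann2}(3) may be discarded outright. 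By that same definition there are only finitely many tubes of rank $2$ or $3$, and a standard tube of rank $r$ contains only finitely many rigid modules (those of small quasi-length), so by the first fact only finitely many $\tau$-rigid modules arise from tubes.

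The remaining point, which I expect to be the only genuinely delicate one, concerns the finitely many non-tube components $\Gamma$, of type $\ZZ A_\infty^\infty$, $\ZZ A_\infty^\infty/\Pi$ or $\ZZ D_\infty$. Here rigidity alone is too weak: over a symmetric algebra $\tau=\Omega^2$ is invertible on the stable module category and $\Ext^1_A(\Omega^n M,\Omega^n M)\cong\Ext^1_A(M,M)$, so a single non-periodic rigid module would already give an infinite family of rigid modules inside $\Gamma$. One must therefore exploit the full condition $\Hom_A(M,\tau M)=0$, which also forbids nonzero maps $M\to\tau M$ through a projective module. Since $A$ is symmetric and basic, $\soc(P_i)=S_i$, so a nonzero map $M\to P(\tau M)$ exists whenever $M$ and $\operatorname{top}(\tau M)=\operatorname{top}(\Omega^2 M)$ share a composition factor; because $A$ has at most three simple modules whereas the composition lengths of the indecomposables in $\Gamma$ are unbounded, this happens for all but finitely many $M\in\Gamma$, and one expects the resulting map through a projective to be nonzero, contradicting $\tau$-rigidity. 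Verifying this last point precisely --- or, alternatively, invoking the explicit description of the non-tube components for these algebras --- is the place where real work is needed; granting it, the three cases together yield only finitely many indecomposable $\tau$-rigid $A$-modules, so $A$ is $\tau$-tilting finite.
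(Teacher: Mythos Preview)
Your approach via the stable Auslander--Reiten quiver is quite different from the paper's, and it has a genuine gap that you yourself flag: the treatment of the non-tube components. The sketch ``one expects the resulting map through a projective to be nonzero'' is not a proof, and making it one would amount to a module-by-module analysis of those components. The obstruction is that nonzero maps $M\to P$ and $P\twoheadrightarrow\tau M$ say nothing about their composite: the image of $M\to P$ may well land in the kernel of $P\to\tau M$, and there is no structural reason this should fail for all but finitely many $M$ in a $\ZZ A_\infty^\infty$ or $\ZZ D_\infty$ component. Since (as you correctly observe) rigidity alone is invariant along $\Omega$-orbits and therefore useless here, your proposed substitute mechanism does not close the gap. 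There is also a smaller issue in the tube argument: Definition~\ref{erdmann2} does not bound the number of rank-$2$ tubes for algebras of semidihedral or quaternion type, so ``by that same definition there are only finitely many tubes of rank $2$ or $3$'' overstates what the definition actually gives you.

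The paper's proof avoids the AR quiver entirely. Since $A$ is a block, its Cartan matrix $C_A$ is positive definite. For a basic two-term tilting complex $T$ with $g$-vector matrix $M$, the Cartan matrix of $\End_{\Kscr^b(\projC_A)}(T)$ equals $MC_AM^t$. The class of algebras in Definition~\ref{erdmann2} is closed under derived equivalence, and Erdmann's classification together with the derived invariance of $\dim_k Z(A)$ shows that only finitely many Cartan matrices occur among these endomorphism rings. For each fixed target matrix the solutions $M$ form a coset of the integral orthogonal group $O(\ZZ^n,C_A)$, which is finite by positive definiteness of $C_A$. Hence there are only finitely many $g$-vector matrices, and Theorem~\ref{g-vector} then bounds the number of basic two-term tilting complexes. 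This argument is uniform and needs no component-by-component analysis; its cost is the appeal to Erdmann's list to bound the Cartan matrices.
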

\begin{proof}
The class of algebras defined in Definition \ref{erdmann2} is clearly closed under derived equivalences (cf. \cite[Proposition 2.1]{MR1656577}), and 
it follows from~\cite{MR1064107} that the entries of the Cartan matrices of algebras in the derived equivalence class of an algebra satisfying Definition~\ref{erdmann2} are bounded (to see this one has to use the fact that the dimension of the center is a derived invariant). Now for a two-term tilting complex $T=T_1 \oplus \cdots \oplus T_l$ (remember that $l=|A|$), write
	$$
	T_i: 0 \to \bigoplus_{j=1}^{l} P_j^{\oplus t_{ij}^-} \to \bigoplus_{j=1}^l P_j^{\oplus t_{ij}^+} \to 0,
	$$
and consider $B=\End_{\Kscr^b(\projC_A)}(T)$. Denote by $C_A$ (respectively $C_B$) the Cartan matrix of $A$ (respectively $B$), and denote by $\chi$ the Euler form on $K_0(\projC_A)$. Then:
	\begin{align}
	(C_B)_{m,n} &= \dim_k \Hom_{\Kscr^b(\projC_A)}(T_m,T_n) \\
	&= \sum_{i} (-1)^i \dim_k \Hom_{\Kscr^b(\projC_A)}(T_m,T_n[i]) \\
	&= \chi(T_m,T_n) \\
	&= \chi(\sum_j (t_{mj}^+ - t_{mj}^-)[P_j],\sum_j(t_{nj}^+ - t_{nj}^-)[P_j]) \\
	&= \sum_{i,j} (t_{mi}^+ - t_{mi}^-) \chi([P_i],[P_j]) (t_{nj}^+ - t_{nj}^-) \\
	&= \sum_{i,j} (t_{mi}^+ - t_{mi}^-) (C_A)_{i,j} (t_{nj}^+ - t_{nj}^-), 
	\end{align}
where in the second equality we used that the $T_i$ are tilting. Defining $M \in M_l(\ZZ)$ by $M_{ij}=t_{ij}^+ - t_{ij}^-$, we obtain
	\begin{align}
	\label{cartan}
	C_B=MC_AM^t.
	\end{align}
In fact, there are only finitely many such $M$. To see this suppose that $M' \in M_l(\ZZ)$ also satisfies $C_B=M'C_AM'^{t}$. Then
	\begin{align}
	C_A=M^{-1} M' C_A M'^t (M^{-1})^t,
	\end{align}
which shows that $M^{-1}M' \in O(\ZZ^n,C_A)$, the group of orthogonal, integral matrices preserving $C_A$.
Since the Cartan matrix $C_A$ is positive definite by virtue of $A$ being a block, this is a finite group. 

We have shown that for each of the finitely many Cartan matrices corresponding to algebras derived equivalent to $A$, there are only finitely many matrices $M$ satisfying~\eqref{cartan}. Since the matrix $M$ above is just the matrix of $g$-vectors of the $T_i$, this matrix already determines the tilting complex $T$ by Theorem~\ref{g-vector}, so we are done. 
\end{proof}

\begin{remark}
The proof above works more generally for any symmetric algebra with positive definite Cartan matrix and a derived equivalence class in which the entries of the Cartan matrices are bounded.
\end{remark}

It is known that tame blocks of group algebras satisfy Definition \ref{erdmann2}, and the appendix Erdmann~\cite{MR1064107} furnishes a complete list of basic algebras satisfying said definition. Later, Holm~\cite{MR1461486} showed that non-local tame blocks must actually be of one of the following types:
	$$
	\begin{tabular}{l  l }
	\label{listtame}
	Dihedral: & $D(2A), D(2B), D(3A), D(3B)_1, D(3K)$ \\
	Semidihedral: & $SD(2A)_{1,2}, SD(2B)_{1,2}, SD(3A)_1, SD(3B)_{1,2}, SD(3C), SD(3D), SD(3H)$ \\
	Quaternion: & $Q(2A), Q(2B)_1, Q(3A)_2, Q(3B), Q(3K)$ \\ 
	\end{tabular}
	$$
By Proposition~\ref{finite}, tame blocks are always $\tau$-tilting finite, so it is possible to completely classify the two-term tilting complexes and their associated Hasse quiver. Using Theorem \ref{maintheorem} and the results on string algebras, we are able to achieve this (in fact, for all algebras in Erdmann's list, not just blocks). In Appendix~\ref{tameblocks}, we provide the presentations of the algebras from the appendix of \cite{MR1064107}, along with central elements and the quotients one obtains. A direct application of Theorem~\ref{maintheorem} then reduces the computation of $g$-vectors and Hasse quivers for all tame blocks to the same computation for five explicitly given finite dimensional algebras (with trivial center): $R(2AB), R(3ABD), R(3C), R(3H)$ and $R(3K)$ (see Table \ref{description quotient algebras} below), which do not depend on any extra data. The computation of $g$-vectors and Hasse quivers for all algebras of dihedral, semidihedral or quaternion type reduces to the same computation for the aforementioned five algebras, and in addition
the five algebras $W(2B)$, $W(3ABC)$, $W(Q(3A)_1)$, $W(3F)$ and $W(3QLR)$ (also given in Table \ref{description quotient algebras}).

 In particular we obtain the following theorems.

\begin{theorem}
\label{independence}
The $g$-vectors and Hasse quivers for tame blocks of group algebras depend only on the Ext-quiver of their basic algebras. 
\end{theorem}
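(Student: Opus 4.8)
The plan is to combine Theorem~\ref{maintheorem} with Holm's list of tame blocks and the tables in Appendix~\ref{tameblocks}. The strategy is purely a bookkeeping argument on top of the reduction theorem: once one knows that passing to a suitable central quotient preserves both $g$-vectors and Hasse quivers, it suffices to check that any two tame blocks with the same Ext-quiver of their basic algebras have a \emph{common} good quotient.

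First I would reduce to basic algebras: since $g$-vectors, $\tau$-rigid modules and mutation quivers are Morita invariant, we may replace a tame block $B$ by its basic algebra $B_0$, which by Holm~\cite{MR1461486} (building on Erdmann~\cite{MR1064107}) appears in the finite list on page~\pageref{listtame}. Inspecting that list, the basic algebras split according to their Ext-quiver into a small number of families: the local ones (one simple module), the ones with Ext-quiver having two vertices and the ``two-loop'' shapes ($2A$, $2B$-type), and the ones with three vertices. Two tame blocks have the same Ext-quiver of their basic algebras precisely when the two basic algebras lie in the same such family; the only parameters distinguishing algebras within a family are the exceptional multiplicities and (in the semidihedral/quaternion cases) a sign or scalar in one relation.

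Next I would invoke the computations in Appendix~\ref{tameblocks}: for each algebra $A$ in Erdmann's list, the appendix exhibits an explicit central element $z\in Z(A)\cap\rad(A)$ (of square zero, after possibly iterating) such that the quotient $A/(z)$ is one of the five fixed string algebras $R(2AB), R(3ABD), R(3C), R(3H), R(3K)$ — and crucially this quotient depends only on the Ext-quiver, not on the multiplicities or the extra scalars, exactly as illustrated for Brauer graph algebras in Example~\ref{example_biserial}. Applying Theorem~\ref{maintheorem} (possibly several times, as in the proof for wild blocks of large defect), the poset of two-term silting complexes of $A$, together with all its $g$-vectors, is identified with that of the corresponding $R(\cdot)$. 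Hence if $A_1$ and $A_2$ are tame blocks whose basic algebras have the same Ext-quiver, both reduce to the \emph{same} $R(\cdot)$, and therefore have identical $g$-vectors and Hasse quivers. This proves the theorem.

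The main obstacle — the only real content beyond citing Theorem~\ref{maintheorem} — is the case analysis verifying that for every entry in Holm's list the prescribed central element actually works: that the displayed element is central and square-zero (or becomes so after iteration), that it lies in $\rad(A)$, that the resulting quotient is indeed the claimed string algebra, and that this quotient is genuinely independent of the exceptional multiplicities and any residual scalar parameters. This is precisely the content of the tables in Appendix~\ref{tameblocks} (notably Table~\ref{description quotient algebras}), so in the body of the paper the proof reduces to: ``combine Theorem~\ref{maintheorem} with the computations collected in Appendix~\ref{tameblocks}'', after the reduction to basic algebras and the observation that the Ext-quiver determines which of the five string algebras $R(\cdot)$ one lands in.
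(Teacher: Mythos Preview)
Your proposal is correct and follows exactly the paper's approach: the theorem is stated as an immediate consequence of Theorem~\ref{maintheorem} together with the case-by-case verification in Table~\ref{table Erdmann list} that every basic algebra in Holm's list of tame blocks reduces, via successive central quotients, to one of the five string algebras $R(2AB)$, $R(3ABD)$, $R(3C)$, $R(3H)$, $R(3K)$, with the quotient depending only on the Ext-quiver. One small inaccuracy in your write-up: algebras sharing an Ext-quiver in Holm's list are not merely distinguished by ``exceptional multiplicities and a sign or scalar'' --- they can have genuinely different relation schemes (e.g.\ $D(2A)$ versus $Q(2A)$) --- so the substance really does lie in the table, which shows that these different relation schemes nonetheless collapse to the same $R(\cdot)$.
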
 

We also find the following more general (but slightly weaker) version of Theorem~\ref{independence}.
\begin{theorem}
	All algebras of dihedral, semidihedral or quaternion type are $\tau$-tilting finite and their $g$-vectors and Hasse quivers are independent of the characteristic of $k$ and the parameters involved in the presentations of their basic algebras.
\end{theorem}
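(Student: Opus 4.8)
The plan is to reduce the statement to a finite, characteristic-independent computation by repeatedly quotienting out centrally generated ideals inside the radical, exactly as Theorem~\ref{maintheorem} permits. First I would invoke Proposition~\ref{finite} (together with the remark following it) to dispose of the $\tau$-tilting finiteness claim: each algebra of dihedral, semidihedral or quaternion type is symmetric with positive definite Cartan matrix (it has nonsingular Cartan matrix by Definition~\ref{erdmann2}, and positive definiteness follows since the class is closed under derived equivalence and the Cartan entries in a derived equivalence class are bounded by Erdmann's classification~\cite{MR1064107}), so Proposition~\ref{finite} and its proof apply verbatim. This already gives that $Q_2(A)$ is finite, hence (by Proposition~\ref{prop_connected}) connected, and that all the relevant invariants are genuinely computable.

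Next I would go through Erdmann's list of basic algebras of dihedral, semidihedral and quaternion type, presented by quivers with relations in Appendix~\ref{tameblocks}. For each such presentation $A = kQ/I$, depending on a characteristic and on scalar parameters, I would exhibit explicit central elements lying in $\rad(A)$ (these are written down in the tables of Appendix~\ref{tameblocks}: typically sums of the ``long'' paths/socle-type elements) and verify by direct computation in the path algebra that they are central and square-zero modulo lower terms. Quotienting by the ideal they generate, I would check that all relations involving the parameters and the characteristic collapse, so that the quotient $A/(\text{central elements}) \cdot A$ is isomorphic to one fixed string algebra not depending on any of that data — one of the ten algebras $R(2AB)$, $R(3ABD)$, $R(3C)$, $R(3H)$, $R(3K)$, $W(2B)$, $W(3ABC)$, $W(Q(3A)_1)$, $W(3F)$, $W(3QLR)$ listed in Table~\ref{description quotient algebras}. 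By Theorem~\ref{maintheorem} the $g$-vectors of two-term rigid (and silting) complexes and the mutation/Hasse quivers of $A$ agree with those of this fixed string algebra; in particular they are independent of the characteristic and the parameters. Finally, one feeds each of these ten string algebras into the combinatorial machine of Section~\ref{separated} (the corollary after Definition~\ref{definition string modules}), running the algorithm to enumerate rigid strings, support $\tau$-tilting pairs and the Hasse quiver, and records the result in the appendix tables; $\tau$-tilting finiteness guarantees the algorithm terminates.

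The main obstacle is the verification step: for each of the roughly two dozen families in Erdmann's list one must (i) correctly identify central elements in the radical, (ii) confirm centrality and that their square vanishes, and (iii) check that the resulting quotient really is parameter- and characteristic-free and coincides with the claimed string algebra. This is entirely mechanical but voluminous — it is the content of Appendix~\ref{tameblocks} — and the only subtlety is bookkeeping: making sure one has quotiented by \emph{all} parameter-dependent relations and that the central elements chosen actually generate an ideal contained in $\rad(A)$ so that Theorem~\ref{maintheorem} applies. Once that is done, Theorems~\ref{maintheorem}, \ref{g-vector}, \ref{compare} and the string-algebra corollary of Section~\ref{separated} assemble the conclusion with no further work, and the $\tau$-tilting finiteness half is already secured by Proposition~\ref{finite}.
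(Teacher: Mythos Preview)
Your proposal follows essentially the same route as the paper: reduce each algebra in Erdmann's list via Theorem~\ref{maintheorem} to one of the ten parameter-free string algebras in Table~\ref{description quotient algebras}, then run the string combinatorics of Section~\ref{separated} on those ten algebras. That is exactly how the paper proves the theorem.

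One small point: your invocation of Proposition~\ref{finite} for the $\tau$-tilting finiteness half is slightly off. That proposition, as stated and proved, applies to \emph{blocks}, where positive definiteness of the Cartan matrix is automatic; the remark extends it to symmetric algebras with positive definite Cartan matrix, but your argument that positive definiteness ``follows since the class is closed under derived equivalence and the Cartan entries \ldots\ are bounded'' is not valid --- non-singularity plus bounded Cartan entries does not imply positive definiteness. Fortunately this detour is unnecessary: once you have reduced to the ten string algebras and computed their (finite) Hasse quivers explicitly, Theorem~\ref{maintheorem} transports that finiteness back to the original algebra. So $\tau$-tilting finiteness is a \emph{consequence} of the reduction and the explicit computation, not a prerequisite for it; the paper obtains it this way rather than via Proposition~\ref{finite}.
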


\begin{corollary}
	All tilting complexes for algebras of dihedral, semidihedral, or quaternion type can be obtained from $A$ (as module over itself) by iterated tilting mutation.
\end{corollary}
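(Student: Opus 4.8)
The plan is to combine the $\tau$-tilting finiteness of these algebras (established in Proposition~\ref{finite}) with the transitivity criterion for silting mutation recalled earlier in the excerpt. Concretely, let $A$ be an algebra of dihedral, semidihedral or quaternion type. Such algebras are symmetric, so $\Kscr^b(\projC_A)$ has silting complexes coinciding with tilting complexes, and $Q(A)$ is the mutation quiver of tilting complexes. The assertion to prove is exactly that $Q(A)$ is connected, with $A$ (as a stalk complex in degree zero) lying in that component; equivalently, iterated tilting mutation acts transitively starting from $A$.

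First I would invoke Proposition~\ref{nterm}: since $A$ is symmetric, it suffices to show that for every tilting complex $C$ in the connected component of $Q(A)$ containing $A$, the endomorphism algebra $\End_{\Kscr^b(\projC_A)}(C)$ has only finitely many basic two-term tilting complexes. Now any such $\End_{\Kscr^b(\projC_A)}(C)$ is derived equivalent to $A$, hence (by \cite[Proposition 2.1]{MR1656577}, cited in the proof of Proposition~\ref{finite}) is again symmetric and of dihedral, semidihedral or quaternion type. Therefore Proposition~\ref{finite} applies to it and gives that it is $\tau$-tilting finite, i.e. it has only finitely many two-term tilting complexes (equivalently, support $\tau$-tilting modules, via Theorem~\ref{bijection}). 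Feeding this back into Proposition~\ref{nterm}, we conclude that $A$ has only finitely many basic $n$-term tilting complexes for every $n$. Finally, Theorem~\ref{connected} then yields that $Q(A)$ is connected, which is precisely the statement that all tilting complexes over $A$ are obtained from $A$ by iterated tilting mutation.

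The only genuinely non-formal input is the closure of Erdmann's class under derived equivalence together with the Cartan-matrix boundedness argument — but both of these are already contained in (the proof of) Proposition~\ref{finite}, so in this write-up they may be used as black boxes. The main point to be careful about is the bookkeeping in the chain of implications: one must make sure that the hypothesis of Proposition~\ref{nterm} is verified for \emph{every} tilting complex $C$ in the relevant component, not just for $A$ itself, and that is where the derived-invariance of the class is essential — it guarantees that $\End_{\Kscr^b(\projC_A)}(C)$ is again of the required type so that Proposition~\ref{finite} may be re-applied uniformly. No difficult new estimate is needed; the work is entirely in assembling Proposition~\ref{finite}, Proposition~\ref{nterm}, and Theorem~\ref{connected} in the correct order. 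One should also remark that, a fortiori, since all these algebras are $\tau$-tilting finite, the explicit computations in Appendix~\ref{tameblocks} already exhibit the full finite mutation quiver, so connectedness can in principle also be read off case by case; but the argument above avoids any reliance on the tables.
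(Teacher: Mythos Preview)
Your proof is correct and follows essentially the same route as the paper's: closure of Erdmann's class under derived equivalence, combined with $\tau$-tilting finiteness, feeds into Proposition~\ref{nterm} and then Theorem~\ref{connected}. One minor point: Proposition~\ref{finite} as stated is only for blocks (its proof uses positive definiteness of the Cartan matrix ``by virtue of $A$ being a block''), so for the corollary in full generality you should appeal to the theorem immediately preceding it, which asserts $\tau$-tilting finiteness for \emph{all} algebras of dihedral, semidihedral or quaternion type.
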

\begin{proof}
	By~\cite[Proposition 2.1]{MR1656577}, the class of algebras satisfying Definition~\ref{erdmann2} is closed under derived equivalence, so the result follows immediately from Proposition~\ref{nterm} and Theorem~\ref{connected}.
\end{proof}

Since $R(2AB), R(3ABD), R(3C), R(3H)$, $R(3K)$, $W(2B)$, $W(3ABC)$, $W(Q(3A)_1)$, $W(3F)$ and $W(3QLR)$ are all string algebras one can go further and actually compute (using the results in Section~\ref{separated}) the $g$-vectors and Hasse quivers of all algebras of dihedral, semidihedral and quaternion type. For details we refer to Appendix~\ref{tameblocks} below.

\appendix

%
%
%
%

\section{Results for algebras of dihedral, semidihedral and quaternion type}
\label{tameblocks}

In this appendix, we use Theorem~\ref{maintheorem} and the results in Section~\ref{separated} to give a complete description of the $g$-vectors and Hasse quivers of all algebras in Erdmann's list (see Definition~\ref{erdmann2}). These include all the basic algebras of tame blocks of group algebras.

In the first three columns of Table \ref{table Erdmann list}, we give the presentations of these basic algebras, along with their names and parameters. The fourth column describes central elements: note that a horizontal bar above an element means that this element only becomes central in the quotient of the algebra by the central elements above the bar. Taking these successive quotients we obtain the finite dimensional algebras listed in the fifth column, whose presentation can be found in Table \ref{description quotient algebras}. Since these algebras were obtained from the original algebras by taking successive central quotients, Theorem~\ref{maintheorem} ensures that the $g$-vectors and Hasse quiver of the original algebras coincide with the ones of the algebra specified in the fifth column. Using the presentations in Table \ref{description quotient algebras} one checks easily that these are all string algebras. Hence we can use the results of  Section~\ref{separated} to determine all of their support $\tau$-tilting modules using string combinatorics, see Figures 1 - 10. We did the string combinatorics using GAP~\cite{GAP4}, and checked the results against computations done over $k=\mathbb F_2$ using the GAP-package QPA~\cite{QPA}, but it is possible to check the correctness of the results by hand.

As for the notation used in Figures 1 - 10: we first give a list of all rigid strings $C$. The notation used for strings should be more or less self-explanatory: for instance, 
\begin{equation}1 \stackrel{\beta}{\longleftarrow} 0 \stackrel{\alpha\beta}{\longrightarrow}1
\end{equation} denotes the string $\beta^{-1}\alpha\beta$. We give a name to each such string (listed in the first column), which is essentially arbitrary (apart from the fact that we use the name $X^\vee$ for a string whose $g$-vector is equal to the $g$-vector of $X$ multiplied by $-1$). For each rigid string $C$ we give $_PC_P$ and its $g$-vector, as well as the names of all other rigid strings which are $C$-rigid. Among these, we highlight those $D$ for which $C$ is $D$-rigid. When $P_i^\vee$ figures among the $C$-rigid strings, it simply means that the corresponding vertex $e_i$ is not in the support of $C$. Below each such table we give the corresponding Hasse quiver, with vertices corresponding to the support $\tau$-tilting modules. We did not draw arrowheads, but the quiver was drawn in such a way that arrows always point downwards.

\begin{landscape}
\renewcommand\arraystretch{1.5}
{\footnotesize {\setlength{\tabcolsep}{2pt} 

}

\begin{figure}[ht]
{\RtwoAB}

{\normalsize Hasse Quiver:}
\begin{equation}
	\resizebox{.25\hsize}{!}{
	\xygraph{
		!{<0cm,0cm>;<1.5cm,0cm>:<0cm,1.5cm>::}
		!{(0,0) }*+{P_0 \oplus P_1 }="1"
		!{(-1,-1) }*+{X^{\vee} \oplus P_1}="2"
		!{(1,-1) }*+{P_0 \oplus Y}="3"
		!{(-1,-2) }*+{X^{\vee} \oplus Y^{\vee}}="4"
		!{(1,-2) }*+{X \oplus Y}="5"
		!{(-1,-3) }*+{Y^{\vee}}="6"
		!{(1,-3) }*+{X}="7"
		!{(0,-4) }*+{0}="8"
		"1" - "2"
		"1" - "3"
		"2" - "4"
		"3" - "5"
		"4" - "6"
		"5" - "7"
		"6" - "8"
		"7" - "8"
	}
	}
\end{equation}
\caption{$R(2AB)$}
\label{R2ABstring}
\end{figure}

\begin{figure}[ht]
{\WtwoB}

{\normalsize Hasse Quiver:}
\begin{equation}
	\resizebox{.3\hsize}{!}{
	\xygraph{
		!{<0cm,0cm>;<1.5cm,0cm>:<0cm,1.5cm>::}
		!{(0,0) }*+{P_0 \oplus P_1 }="1"
		!{(-1,-1) }*+{Y^{\vee} \oplus P_1}="2"
		!{(1,-1) }*+{P_0 \oplus Y}="3"
		!{(-1,-2) }*+{Y^{\vee}}="4"
		!{(1,-2) }*+{Y}="5"
		!{(0,-3) }*+{0}="6"
		"1" - "2"
		"1" - "3"
		"2" - "4"
		"3" - "5"
		"4" - "6"
		"5" - "6"
	}
	}
\end{equation}
\caption{$W(2B)$}
\label{W2Bstring}
\end{figure}

\newpage 
\begin{figure}[ht]
{\RthreeABD}

{\normalsize Hasse Quiver:}
{\large \begin{equation}
	\resizebox{.80\hsize}{!}{
	\xygraph{
		!{<0cm,0cm>;<1.5cm,0cm>:<0cm,1.5cm>::}
		!{(0,0) }*+{P_1\oplus P_0 \oplus P_2}="1"
		!{(-2,-2) }*+{M_2\oplus P_0 \oplus P_2}="2"
		!{(0,-2) }*+{P_1\oplus X^\vee \oplus P_2}="3"
		!{(2,-2) }*+{P_1\oplus P_0 \oplus M_1}="4"
		!{(-4,-4) }*+{M_2\oplus {M_0}^\vee \oplus P_2}="5"
		!{(-2,-4) }*+{T_1^\vee\oplus X^\vee \oplus P_2}="6"
		!{(0,-4) }*+{M_2\oplus P_0 \oplus M_1}="7"
		!{(2,-4) }*+{P_1\oplus X^\vee \oplus T_2^\vee}="8"
		!{(4,-4) }*+{P_1\oplus M_0 \oplus M_1}="9"
		!{(-6,-6) }*+{M_2\oplus {M_0}^\vee \oplus T_2}="10"
		!{(-3,-5) }*+{T_1^\vee \oplus {M_0}^\vee \oplus P_2}="11"
		!{(-2,-6) }*+{T_1^\vee \oplus X^\vee \oplus M_1^\vee}="12"
		!{(0,-6) }*+{M_2\oplus X \oplus M_1}="13"
		!{(2,-6) }*+{M_2^\vee \oplus X^\vee \oplus T_2^\vee}="14"
		!{(3,-5) }*+{P_1\oplus M_0 \oplus T_2^\vee}="15"
		!{(6,-6) }*+{T_1\oplus M_0 \oplus M_1}="16"		
		!{(0,-14) }*+{0}="1d"
		!{(2,-12) }*+{M_2^\vee}="2d"
		!{(0,-12) }*+{X}="3d"
		!{(-2,-12) }*+{M_1^\vee}="4d"
		!{(4,-10) }*+{M_2^\vee\oplus M_0}="5d"
		!{(2,-10) }*+{T_1\oplus X}="6d"
		!{(0,-10) }*+{M_2^\vee\oplus {M_1}^\vee}="7d"
		!{(-2,-10) }*+{X \oplus T_2}="8d"
		!{(-4,-10) }*+{{M_0}^\vee \oplus {M_1}^\vee}="9d"
		!{(6,-8) }*+{{M_2}^\vee\oplus M_0 \oplus T_2^\vee}="10d"
		!{(3,-9) }*+{T_1 \oplus M_0}="11d"
		!{(2,-8) }*+{T_1 \oplus X \oplus M_1}="12d"
		!{(0,-8) }*+{{M_2}^\vee\oplus X^\vee \oplus {M_1}^\vee}="13d"
		!{(-2,-8) }*+{M_2 \oplus X \oplus T_2}="14d"
		!{(-3,-9) }*+{{M_0}^\vee \oplus T_2}="15d"
		!{(-6,-8) }*+{T_1^\vee\oplus {M_0}^\vee \oplus {M_1}^\vee}="16d"
		"1" - "2"
		"1" - "3"
		"1" - "4"
		"2" - "5"
		"2" - "7"
		"3" - "6"
		"4" - "7"
		"3" - "8"
		"4" - "9"
		"5" - "10"
		"5" - "11"
		"6" - "11"
		"6" - "12"
		"7" - "13"
		"8" - "14"
		"8" - "15"
		"9" - "15"
		"9" - "16"
		"1d" - "2d"
		"1d" - "3d"
		"1d" - "4d"
		"2d" - "5d"
		"2d" - "7d"
		"3d" - "6d"
		"4d" - "7d"
		"3d" - "8d"
		"4d" - "9d"
		"5d" - "10d"
		"5d" - "11d"
		"6d" - "11d"
		"6d" - "12d"
		"7d" - "13d"
		"8d" - "14d"
		"8d" - "15d"
		"9d" - "15d"
		"9d" - "16d"
		"10" - "15d"
		"15" - "10d"
		"11" - "16d"
		"16" - "11d"
		"10" - "14d"
		"14" - "10d"
		"16" - "12d"
		"12" - "16d"
		"13" - "12d"
		"12" - "13d"
		"13" - "14d"
		"14" - "13d"  
	}
	}
\end{equation}}
\caption{$R(3ABD)$}
\label{R3ABDstring}
\end{figure}

\newpage

\begin{figure}[ht]
{\RthreeC}

{\normalsize Hasse Quiver:}
\begin{equation}
	\resizebox{1.0\hsize}{!}{
	\xygraph{
		!{<0cm,0cm>;<1.5cm,0cm>:<0cm,1.5cm>::}
		!{(0,0) }*+{P_0 \oplus P_1 \oplus P_2 }="1"
		!{(-2,-2) }*+{M_2 \oplus P_0 \oplus P_2}="2"
		!{(0,-2) }*+{P_1 \oplus X^{\vee} \oplus P_2}="3"
		!{(2,-2) }*+{P_1 \oplus P_0 \oplus M_1}="4"
		!{(-4,-4) }*+{M_2 \oplus M_0^{\vee} \oplus P_2}="5"
		!{(-2,-4)}*+{M_1^{\vee} \oplus X^{\vee} \oplus P_2}="6"
		!{(0,-4) }*+{M_2 \oplus P_0 \oplus M_1}="7"
		!{(2,-4)}*+{P_1 \oplus X^{\vee} \oplus M_2^{\vee}}="8"
		!{(4,-4)}*+{P_1 \oplus M_0 \oplus M_1}="9"
		!{(0,-5)}*+{M_2 \oplus X \oplus M_1}="10"
		!{(-3,-6)}*+{M_1^{\vee} \oplus M_0^{\vee} \oplus P_2}="11"
		!{(-6,-6)}*+{M_2 \oplus M_0^{\vee}}="12"
		!{(6,-6)}*+{M_0 \oplus M_1}="13"
		!{(3,-6)}*+{P_1 \oplus M_0 \oplus M_2^{\vee}}="14"
		!{(0,-7)}*+{M_1^{\vee} \oplus X^{\vee} \oplus M_2^{\vee}}="15"
		!{(-4,-8)}*+{M_1^{\vee} \oplus M_0^{\vee}}="16"
		!{(-2,-8)}*+{M_2 \oplus X}="17"
		!{(0,-8)}*+{M_1^{\vee} \oplus M_2^{\vee}}="18"
		!{(2,-8)}*+{X \oplus M_1}="19"
		!{(4,-8)}*+{M_0 \oplus M_2^{\vee}}="20"
		!{(-2,-10)}*+{M_1^{\vee}}="21"
		!{(0,-10)}*+{X}="22"
		!{(2,-10)}*+{M_2^{\vee}}="23"
		!{(0,-12)}*+{0}="24"
		"1" - "2"
		"1" - "3"
		"1" - "4"
		"2" - "5"
		"2" - "7"
		"3" - "6"
		"3" - "8"
		"4" - "7"
		"4" - "9"
		"5" - "11"
		"5" - "12"
		"6" - "11"
		"6" - "15"
		"7" - "10"
		"8" - "15"
		"8" - "14"
		"9" - "13"
		"9" - "14"
		"10" - "17"
		"10" - "19"
		"11" - "16"
		"12" - "16"
		"12" - "17"
		"13" - "19"
		"13" - "20"
		"14" - "20"
		"15" - "18"
		"16" - "21"
		"17" - "22"
		"18" - "21"
		"18" - "23"
		"19" - "22"
		"20" - "23"
		"21" - "24"
		"22" - "24"
		"23" - "24"
	}
	}
\end{equation}
\caption{$R(3C)$}
\label{R3Cstring}
\end{figure}

\newpage

\begin{figure}[ht]
{\RthreeH}

{\normalsize Hasse Quiver:}
{\large \begin{equation}
	\resizebox{.85\hsize}{!}{
	\xygraph{
		!{<0cm,0cm>;<1.5cm,0cm>:<0cm,1.5cm>::}
		!{(0,0) }*+{P_0 \oplus P_1 \oplus P_2 }="1"
		!{(7,-1) }*+{X^{\vee} \oplus P_1 \oplus P_2}="2"
		!{(-7,-2) }*+{P_0\oplus Y \oplus P_2}="3"
		!{(0,-2) }*+{P_0 \oplus P_1 \oplus M_0}="4"
		!{(7,-2) }*+{X^{\vee} \oplus P_1 \oplus M_2^{\vee}}="6"
		!{(-7,-4)}*+{P_0 \oplus Y \oplus M_2}="8"
		!{(-3,-4) }*+{P_0 \oplus M_1 \oplus M_0}="10"
		!{(3,-4)}*+{M_2^{\vee} \oplus P_1 \oplus M_0}="9"
		!{(-5,-5)}*+{P_0 \oplus M_1 \oplus M_2}="14"
		!{(-2,-6)}*+{M_0^{\vee} \oplus Y \oplus P_2}="7"
		!{(2,-6)}*+{X^{\vee} \oplus M_1^{\vee} \oplus P_2}="5"
		!{(-7,-7)}*+{M_0^{\vee} \oplus Y \oplus M_2}="13"
		!{(0,-7)}*+{M_0^{\vee} \oplus M_1^{\vee} \oplus P_2}="11"
		!{(7,-7)}*+{X^{\vee} \oplus M_1^{\vee} \oplus M_2^{\vee}}="12"
		!{(-7,-9)}*+{M_2^{\vee} \oplus Y^{\vee} \oplus M_0}="15"
		!{(0,-9)}*+{M_1^{\vee} \oplus M_2^{\vee}}="18"
		!{(7,-9)}*+{M_0^{\vee} \oplus M_1^{\vee} \oplus Z^{\vee}}="17"
		!{(-2,-10)}*+{M_2^{\vee} \oplus Y^{\vee}}="22"
		!{(2,-10)}*+{M_1^{\vee} \oplus Z^{\vee}}="23"
		!{(-5,-11)}*+{M_1 \oplus M_0}="16"
		!{(-7,-12)}*+{Y^{\vee} \oplus M_0}="21"
		!{(-3,-12)}*+{M_1 \oplus M_2}="20"
		!{(3,-12)}*+{M_0^{\vee} \oplus M_2}="19"
		!{(-7,-14)}*+{Y^{\vee}}="26"
		!{(0,-14)}*+{M_2}="25"
		!{(7,-14)}*+{M_0^{\vee} \oplus Z^{\vee}}="24"
		!{(7,-15)}*+{Z^{\vee}}="27"
		!{(0,-16)}*+{0}="28"
		"1" - "2"
		"1" - "3"
		"1" - "4"
		"2" - "5"
		"2" - "6"
		"3" - "7"
		"3" - "8"
		"4" - "10"
		"4" - "9"
		"5" - "11"
		"5" - "12"
		"6" - "12"
		"6" - "9"
		"7" - "11"
		"7" - "13"
		"8" - "13"
		"8" - "14"
		"9" - "15"
		"10" - "16"
		"10" - "14"
		"11" - "17"
		"12" - "18"
		"13" - "19"
		"14" - "20"
		"15" - "21"
		"15" - "22"
		"16" - "21"
		"16" - "20"
		"17" - "23"
		"17" - "24"
		"18" - "22"
		"18" - "23"
		"19" - "25"
		"19" - "24"
		"20" - "25"
		"21" - "26"
		"22" - "26"
		"23" - "27"
		"24" - "27"
		"25" - "28"
		"26" - "28"
		"27" - "28"
	}
	}
\end{equation}}
\caption{$R(3H)$}
\label{R3Hstring}
\end{figure}

\newpage

\begin{figure}[ht]
{\renewcommand\tabcolsep{3.5pt}\RthreeK}
{\normalsize Hasse Quiver:}
{\Large \begin{equation}
	\resizebox{1.00\hsize}{!}{
	\xygraph{
		!{<0cm,0cm>;<1.5cm,0cm>:<0cm,1.5cm>::}
		!{(0,0) }*+{P_0 \oplus P_1 \oplus P_2 }="1"
		!{(-3,-2) }*+{X^{\vee} \oplus P_1 \oplus P_2}="2"
		!{(0,-2) }*+{P_0 \oplus Y \oplus P_2}="3"
		!{(3,-2) }*+{P_0 \oplus P_1 \oplus Z}="4"
		!{(-6,-4) }*+{X^{\vee} \oplus M_1^{\vee} \oplus P_2}="5"
		!{(-3,-4)}*+{X^{\vee}\oplus P_1 \oplus M_2^{\vee}}="6"
		!{(-1,-4) }*+{M_0^{\vee} \oplus Y \oplus P_2}="7"
		!{(1,-4)}*+{P_0 \oplus Y \oplus M_2}="8"
		!{(3,-4)}*+{M_0 \oplus P_1 \oplus Z}="9"
		!{(6,-4)}*+{P_0 \oplus M_1 \oplus Z}="10"
		!{(0,-5)}*+{M_0^{\vee} \oplus Y \oplus M_2}="14"
		!{(-9,-6)}*+{M_0^{\vee} \oplus M_1^{\vee} \oplus P_2}="11"
		!{(-6,-6)}*+{X^{\vee} \oplus M_1^{\vee} \oplus M_2^{\vee}}="12"
		!{(0,-6)}*+{M_0 \oplus P_1 \oplus M_2^{\vee}}="13"
		!{(6,-6)}*+{M_0 \oplus M_1 \oplus Z}="16"
		!{(9,-6)}*+{P_0 \oplus M_1 \oplus M_2}="15"
		!{(0,-8)}*+{M_0^{\vee} \oplus M_2}="20"
		!{(-9,-8)}*+{M_1^{\vee} \oplus M_2^{\vee}}="18"
		!{(-6,-8)}*+{M_0^{\vee} \oplus M_1^{\vee}\oplus Z^{\vee}}="17"
		!{(0,-9)}*+{M_0 \oplus Y^{\vee} \oplus M_2^{\vee}}="19"
		!{(6,-8)}*+{X \oplus M_1 \oplus M_2}="21"
		!{(9,-8)}*+{M_0 \oplus M_1}="22"
		!{(-6,-10)}*+{M_1^{\vee} \oplus Z^{\vee}}="23"
		!{(-3,-10)}*+{M_0^{\vee} \oplus Z^{\vee}}="24"
		!{(-1,-10)}*+{Y^{\vee} \oplus M_2^{\vee}}="25"
		!{(1,-10)}*+{M_0 \oplus Y^{\vee}}="26"
		!{(3,-10)}*+{X \oplus M_2}="27"
		!{(6,-10)}*+{X \oplus M_1}="28"
		!{(-3,-12)}*+{Z^{\vee}}="29"
		!{(0,-12)}*+{Y^{\vee}}="30"
		!{(3,-12)}*+{X}="31"
		!{(0,-14)}*+{0}="32"
		"1" - "2"
		"1" - "3"
		"1" - "4"
		"2" - "5"
		"2" - "6"
		"3" - "7"
		"3" - "8"
		"4" - "10"
		"4" - "9"
		"5" - "11"
		"5" - "12"
		"6" - "12"
		"6" - "13"
		"7" - "11"
		"7" - "14"
		"8" - "15"
		"8" - "14"
		"9" - "13"
		"9" - "16"
		"10" - "16"
		"10" - "15"
		"11" - "17"
		"12" - "18"
		"19" -@/^5pc/ "13"
		"14" -@/^5pc/ "20"
		"15" - "21"
		"16" - "22"
		"17" - "23"
		"17" - "24"
		"18" - "25"
		"18" - "23"
		"19" - "25"
		"19" - "26"
		"20" - "24"
		"20" - "27"
		"21" - "27"
		"21" - "28"
		"22" - "26"
		"22" - "28"
		"23" - "29"
		"24" - "29"
		"25" - "30"
		"26" - "30"
		"27" - "31"
		"28" - "31"
		"29" - "32"
		"30" - "32"
		"31" - "32"
	}
	}
\end{equation}}
\caption{$R(3K)$}
\label{R3Kstring}
\end{figure}

\newpage

\begin{figure}[ht]
{\WthreeABCD}

{\normalsize Hasse Quiver:}
{\large \begin{equation}
	\resizebox{.80\hsize}{!}{
	\xygraph{
		!{<0cm,0cm>;<1.5cm,0cm>:<0cm,1.5cm>::}
		!{(0,0) }*+{P_0 \oplus P_1 \oplus P_2 }="1"	
		!{(-6,-2)}*+{P_0 \oplus M_2 \oplus P_2}="2"
		!{(0,-2)}*+{X^{\vee} \oplus P_1 \oplus P_2}="3"
		!{(6,-2) }*+{P_0 \oplus P_1 \oplus M_1}="4"
		!{(0,-4) }*+{P_0 \oplus M_2 \oplus M_1}="6"
		!{(-3,-6)}*+{X^{\vee}\oplus M_1^{\vee} \oplus P_2}="7"
		!{(3,-6) }*+{X^{\vee} \oplus P_1 \oplus M_2^{\vee}}="8"
		!{(-6,-7)}*+{M_2 \oplus P_2}="5"
		!{(0,-6)}*+{X \oplus M_2 \oplus M_1}="12"
		!{(6,-7)}*+{P_1 \oplus M_1}="9"
		!{(-6,-9)}*+{M_1^{\vee} \oplus P_2}="10"
		!{(6,-9)}*+{P_1 \oplus M_2^{\vee}}="14"
		!{(0,-10)}*+{X^{\vee} \oplus M_1^{\vee} \oplus M_2^{\vee}}="13"
		!{(-3,-10)}*+{M_2 \oplus X}="11"
		!{(3,-10)}*+{X \oplus M_1}="15"
		!{(0,-12)}*+{M_1^{\vee} \oplus M_2^{\vee}}="18"
		!{(-6,-14)}*+{M_1^{\vee}}="16"
		!{(0,-14)}*+{X}="17"
		!{(6,-14)}*+{M_2^{\vee}}="19"
		!{(0,-16)}*+{0}="20"
		"1" - "2"
		"1" - "3"
		"1" - "4"
		"2" - "5"
		"2" - "6"
		"3" - "7"
		"3" - "8"
		"4" - "6"
		"4" - "9"
		"5" - "10"
		"5" - "11"
		"6" - "12"
		"7" - "10"
		"7" - "13"
		"8" - "13"
		"8" - "14"
		"9" - "14"
		"9" - "15"
		"10" - "16"
		"11" - "17"
		"12" - "11"
		"12" - "15"
		"13" - "18"
		"14" - "19"
		"15" - "17"
		"16" - "20"
		"17" - "20"
		"18" - "16"
		"18" - "19"
		"19" - "20"
	}
	}
\end{equation}	}
\caption{$W(3ABCD)$}
\label{W3ABCDstring}
\end{figure}

\newpage

\begin{figure}[ht]
{\WthreeQA}	

{\normalsize Hasse Quiver:}
{\large \begin{equation}
	\resizebox{1.0\hsize}{!}{
	\xygraph{
		!{<0cm,0cm>;<1.5cm,0cm>:<0cm,1.5cm>::}
		!{(0,0) }*+{P_0 \oplus P_1 \oplus P_2 }="1"
		!{(-4,-2) }*+{P_0 \oplus M_2 \oplus P_2}="2"
		!{(0,-2) }*+{X^{\vee} \oplus P_1 \oplus P_2}="3"
		!{(4,-2) }*+{P_0 \oplus P_1 \oplus M_1}="4"
		!{(-8,-4) }*+{M_0^{\vee} \oplus M_2 \oplus P_2}="5"
		!{(-4,-4)}*+{X^{\vee} \oplus M_1^{\vee} \oplus P_2}="7"
		!{(4,-4) }*+{X^{\vee} \oplus P_1 \oplus M_2^{\vee}}="8"
		!{(8,-4)}*+{M_0 \oplus P_1 \oplus M_1}="9"
		!{(0,-5)}*+{X^{\vee} \oplus M_1^{\vee} \oplus M_2^{\vee}}="13"
		!{(0,-7)}*+{P_0 \oplus M_2 \oplus M_1}="6"
		!{(-8,-8)}*+{M_0^{\vee} \oplus M_1^{\vee} \oplus P_2}="10"
		!{(-4,-8)}*+{M_0^{\vee} \oplus M_2}="11"
		!{(4,-8)}*+{M_0 \oplus M_1}="15"
		!{(8,-8)}*+{M_0 \oplus P_1 \oplus M_2^{\vee}}="14"
		!{(0,-9)}*+{M_2^{\vee} \oplus M_1^{\vee}}="19"
		!{(0,-11)}*+{X \oplus M_2 \oplus M_1}="12"
		!{(-8,-12)}*+{M_0^{\vee} \oplus M_1^{\vee}}="16"
		!{(-4,-12)}*+{X \oplus M_2}="17"
		!{(4,-12)}*+{M_1 \oplus X}="18"
		!{(8,-12)}*+{M_0 \oplus M_2^{\vee}}="20"
		!{(-4,-14)}*+{M_1^{\vee}}="21"
		!{(0,-14)}*+{X}="22"
		!{(4,-14)}*+{M_2^{\vee}}="23"
		!{(0,-16)}*+{0}="24"
		"1" - "2"
		"1" - "3"
		"1" - "4"
		"2" - "5"
		"2" - "6"
		"3" - "7"
		"3" - "8"
		"4" - "6"
		"4" - "9"
		"5" - "10"
		"5" - "11"
		"6" -@/^5pc/ "12"
		"7" - "10"
		"7" - "13"
		"8" - "13"
		"8" - "14"
		"9" - "15"
		"9" - "14"
		"10" - "16"
		"11" - "16"
		"11" - "17"
		"12" - "17"
		"12" - "18"
		"19" -@/^5pc/ "13"
		"14" - "20"
		"15" - "18"
		"15" - "20"
		"16" - "21"
		"17" - "22"
		"18" - "22"
		"19" - "21"
		"19" - "23"
		"20" - "23"
		"21" - "24"
		"22" - "24"
		"23" - "24"
	}
	}
\end{equation}}
\caption{$W(3QA_1)$}
\label{WQ3A1string}
\end{figure}

\newpage

\begin{figure}[ht]
{\WthreeF}
		
Hasse Quiver:
{\large \begin{equation}
	\resizebox{0.9\hsize}{!}{
	\xygraph{
		!{<0cm,0cm>;<1.5cm,0cm>:<0cm,1.5cm>::}
		!{(0,0) }*+{P_0 \oplus P_1 \oplus P_2 }="1"
		!{(-3,-2)}*+{P_0 \oplus P_1 \oplus M_0}="4"
		!{(0,-2)}*+{M_1^{\vee} \oplus P_1 \oplus P_2}="2"
		!{(3,-2)}*+{P_0 \oplus Y \oplus P_2}="3"
		!{(6,-4)}*+{M_0^{\vee} \oplus Y \oplus P_2}="7"
		!{(3,-5)}*+{P_0 \oplus Y \oplus M_2}="8"
		!{(-3,-4)}*+{P_0 \oplus M_1 \oplus M_0}="10"
		!{(-3,-6)}*+{M_1^{\vee} \oplus P_1 \oplus M_2^{\vee}}="6"
		!{(0,-6)}*+{P_0 \oplus M_1 \oplus M_2}="14"
		!{(3,-7)}*+{M_1^{\vee} \oplus M_0^{\vee} \oplus P_2}="5"
		!{(6,-6)}*+{M_0^{\vee} \oplus Y \oplus M_2}="13"
		!{(-8,-7)}*+{M_2^{\vee} \oplus P_1 \oplus M_0}="9"
		!{(-8,-9)}*+{M_1 \oplus M_0}="16"
		!{(-3,-10)}*+{M_1 \oplus M_2}="20"
		!{(0,-10)}*+{M_1^{\vee} \oplus M_2^{\vee}}="12"
		!{(3,-9)}*+{M_0^{\vee} \oplus M_2}="19"
		!{(6,-10)}*+{M_1^{\vee} \oplus M_0^{\vee} \oplus Z^{\vee}}="11"
		!{(-3,-12)}*+{M_2^{\vee} \oplus M_0}="15"
		!{(3,-11)}*+{M_1^{\vee} \oplus Z^{\vee}}="18"
		!{(6,-12)}*+{M_0^{\vee} \oplus Z^{\vee}}="17"
		!{(-3,-14)}*+{M_0}="21"
		!{(0,-14)}*+{M_2}="23"
		!{(3,-14)}*+{Z^{\vee}}="22"
		!{(0,-16)}*+{0}="24"
		"1" - "2"
		"1" - "3"
		"1" - "4"
		"2" - "5"
		"2" - "6"
		"3" - "7"
		"3" - "8"
		"4" - "10"
		"4" - "9"
		"5" - "11"
		"6" - "9"
		"6" - "12"
		"7" - "5"
		"7" - "13"
		"8" - "13"
		"8" - "14"
		"9" - "15"
		"10" - "16"
		"10" - "14"
		"11" - "18"
		"11" - "17"
		"12" - "15"
		"12" - "18"
		"13" - "19"
		"14" - "20"
		"15" - "21"
		"16" - "20"
		"16" - "21"
		"17" - "22"
		"18" - "22"
		"19" - "17"
		"19" - "23"
		"20" - "23"
		"21" - "24"
		"22" - "24"
		"23" - "24"
	}
	}
\end{equation}}
\caption{$W(3F)$}
\label{W3Fstring}
\end{figure}	

\newpage

\begin{figure}[ht]
{\WthreeQLR}

{\normalsize Hasse Quiver:}
{\Large \begin{equation}
	\resizebox{.9\hsize}{!}{
	\xygraph{
		!{<0cm,0cm>;<1.5cm,0cm>:<0cm,1.5cm>::}
		!{(0,0) }*+{P_0 \oplus P_1 \oplus P_2 }="1"
		!{(-3,-2)}*+{M_1^{\vee} \oplus P_1 \oplus P_2}="2"
		!{(0,-2)}*+{P_0 \oplus P_1 \oplus M_0}="4"
		!{(3,-2)}*+{P_0 \oplus M_2 \oplus P_2}="3"
		!{(-6,-4)}*+{M_1^{\vee} \oplus P_1 \oplus M_2^{\vee}}="6"
		!{(0,-5)}*+{M_0^{\vee} \oplus M_2 \oplus P_2}="7"
		!{(3,-4)}*+{P_0 \oplus M_1 \oplus M_0}="10"
		!{(6,-6)}*+{P_0 \oplus M_2 \oplus M_1}="8"
		!{(-10,-7)}*+{M_2^{\vee} \oplus P_1 \oplus M_0}="9"
		!{(-3,-7)}*+{M_1^{\vee} \oplus M_0^{\vee} \oplus P_2}="5"
		!{(-10,-9)}*+{M_1^{\vee} \oplus M_2^{\vee}}="12"
		!{(-3,-9)}*+{M_1 \oplus M_0}="16"
		!{(6,-10)}*+{M_0^{\vee} \oplus M_2}="13"
		!{(0,-10)}*+{M_2 \oplus M_1}="14"
		!{(3,-11)}*+{M_1^{\vee} \oplus M_0^{\vee}}="11"
		!{(-6,-11)}*+{M_2^{\vee} \oplus M_0}="15"
		!{(-3,-13)}*+{M_0}="19"
		!{(0,-13)}*+{M_1^{\vee}}="17"
		!{(3,-13)}*+{M_2}="18"
		!{(0,-15)}*+{0}="20"
		"1" - "2"
		"1" - "3"
		"1" - "4"
		"2" - "5"
		"2" - "6"
		"3" - "7"
		"3" - "8"
		"4" - "10"
		"4" - "9"
		"5" - "11"
		"6" - "9"
		"6" - "12"
		"7" - "5"
		"7" - "13"
		"8" - "14"
		"9" - "15"
		"10" - "16"
		"10" - "8"
		"11" - "17"
		"12" - "15"
		"12" - "17"
		"13" - "18"
		"13" - "11"
		"14" - "18"
		"15" - "19"
		"16" - "14"
		"16" - "19"
		"17" - "20"
		"18" - "20"
		"19" - "20"
	}
	}
\end{equation}	}
\caption{$W(3QLR)$}
\label{W3QLRstring}
\end{figure}
%
%

\newpage
\bibliography{mybibs}{}

\providecommand{\bysame}{\leavevmode\hbox to3em{\hrulefill}\thinspace}
\providecommand{\MR}{\relax\ifhmode\unskip\space\fi MR }
\providecommand{\MRhref}[2]{%
  \href{http://www.ams.org/mathscinet-getitem?mr=#1}{#2}
}
\providecommand{\href}[2]{#2}
\begin{thebibliography}{10}

\bibitem{Adachi:2013aa}
T.~Adachi, \emph{{The classification of $\tau$-tilting modules over Nakayama
  algebras}}, arXiv preprint arXiv:1309.2216 (2013).

\bibitem{MR3362257}
\bysame, \emph{{$\tau$-rigid-finite algebras with radical square zero}},
  Proceedings of the 47th {S}ymposium on {R}ing {T}heory and {R}epresentation
  {T}heory, Symp. Ring Theory Represent. Theory Organ. Comm., Okayama, 2015,
  pp.~1--6. \MR{3362257}

\bibitem{Adachi:2015aa}
T.~Adachi, T.~Aihara, and A.~Chan, \emph{{Tilting Brauer graph algebras I:
  Classification of two-term tilting complexes}}, arXiv preprint
  arXiv:1504.04827 (2015).

\bibitem{MR3187626}
T.~Adachi, O.~Iyama, and I.~Reiten, \emph{{$\tau$}-tilting theory}, Compos.
  Math. \textbf{150} (2014), no.~3, 415--452. \MR{3187626}

\bibitem{MR3049676}
T.~Aihara, \emph{Tilting-connected symmetric algebras}, Algebr. Represent.
  Theory \textbf{16} (2013), no.~3, 873--894. \MR{3049676}

\bibitem{MR2927802}
T.~Aihara and O.~Iyama, \emph{Silting mutation in triangulated categories}, J.
  Lond. Math. Soc. (2) \textbf{85} (2012), no.~3, 633--668. \MR{2927802}

\bibitem{MR3362259}
T.~Aihara and Y.~Mizuno, \emph{Tilting complexes over preprojective algebras of
  {D}ynkin type}, Proceedings of the 47th {S}ymposium on {R}ing {T}heory and
  {R}epresentation {T}heory, Symp. Ring Theory Represent. Theory Organ. Comm.,
  Okayama, 2015, pp.~14--19. \MR{3362259}

\bibitem{MR860771}
J.~L. Alperin, \emph{Local representation theory}, Cambridge Studies in
  Advanced Mathematics, vol.~11, Cambridge University Press, Cambridge, 1986,
  Modular representations as an introduction to the local representation theory
  of finite groups. \MR{860771}

\bibitem{MR2469411}
M.~A. Antipov, \emph{Derived equivalence of symmetric special biserial
  algebras}, Zap. Nauchn. Sem. S.-Peterburg. Otdel. Mat. Inst. Steklov. (POMI)
  \textbf{343} (2007), no.~Vopr. Teor. Predts. Algebr. i Grupp. 15, 5--32, 272.
  \MR{2469411}

\bibitem{MR0472984}
V.~M. Bondarenko and J.~A. Drozd, \emph{The representation type of finite
  groups}, Zap. Nau\v cn. Sem. Leningrad. Otdel. Mat. Inst. Steklov. (LOMI)
  \textbf{71} (1977), 24--41, 282, Modules and representations. \MR{0472984}

\bibitem{MR0255704}
S.~Brenner, \emph{Modular representations of {$p$} groups}, J. Algebra
  \textbf{15} (1970), 89--102. \MR{0255704}

\bibitem{MR876976}
M.~C.~R. Butler and C.~M. Ringel, \emph{Auslander-{R}eiten sequences with few
  middle terms and applications to string algebras}, Comm. Algebra \textbf{15}
  (1987), no.~1-2, 145--179. \MR{876976}

\bibitem{MR1064107}
K.~Erdmann, \emph{Blocks of tame representation type and related algebras},
  Lecture Notes in Mathematics, vol. 1428, Springer-Verlag, Berlin, 1990.
  \MR{1064107}

\bibitem{GAP4}
The GAP~Group, \emph{{GAP -- Groups, Algorithms, and Programming, Version
  4.8.2}}, 2016.

\bibitem{MR0067896}
D.~G. Higman, \emph{Indecomposable representations at characteristic {$p$}},
  Duke Math. J. \textbf{21} (1954), 377--381. \MR{0067896}

\bibitem{MR1461486}
T.~Holm, \emph{Derived equivalent tame blocks}, J. Algebra \textbf{194} (1997),
  no.~1, 178--200. \MR{1461486}

\bibitem{MR1656577}
\bysame, \emph{Derived equivalence classification of algebras of dihedral,
  semidihedral, and quaternion type}, J. Algebra \textbf{211} (1999), no.~1,
  159--205. \MR{1656577}

\bibitem{MR1880662}
M.~Hoshino and Y.~Kato, \emph{Tilting complexes defined by idempotents}, Comm.
  Algebra \textbf{30} (2002), no.~1, 83--100. \MR{1880662}

\bibitem{MR0396773}
J.E. Humphreys, \emph{Linear algebraic groups}, Springer-Verlag, New
  York-Heidelberg, 1975, Graduate Texts in Mathematics, No. 21. \MR{0396773 (53
  \#633)}

\bibitem{iyama2016classifying}
O.~Iyama and X.~Zhang, \emph{{Classifying $\tau$-tilting modules over the
  Auslander algebra of $K[x]/(x^{n})$}}, arXiv preprint arXiv:1602.05037
  (2016).

\bibitem{MR2133687}
B.T. Jensen, X.~Su, and A.~Zimmermann, \emph{Degenerations for derived
  categories}, J. Pure Appl. Algebra \textbf{198} (2005), no.~1-3, 281--295.
  \MR{2133687 (2006a:18009)}

\bibitem{MR1649844}
B.~Keller, \emph{On the construction of triangle equivalences}, Derived
  equivalences for group rings, Lecture Notes in Math., vol. 1685, Springer,
  Berlin, 1998, pp.~155--176. \MR{1649844}

\bibitem{MR976638}
B.~Keller and D.~Vossieck, \emph{Aisles in derived categories}, Bull. Soc.
  Math. Belg. S\'er. A \textbf{40} (1988), no.~2, 239--253, Deuxi{\`e}me
  Contact Franco-Belge en Alg{\`e}bre (Faulx-les-Tombes, 1987). \MR{976638}

\bibitem{MR3229959}
Y.~Mizuno, \emph{Classifying {$\tau$}-tilting modules over preprojective
  algebras of {D}ynkin type}, Math. Z. \textbf{277} (2014), no.~3-4, 665--690.
  \MR{3229959}

\bibitem{okuyama1997some}
T.~Okuyama, \emph{Some examples of derived equivalent blocks of finite groups},
  preprint (1997).

\bibitem{oppermann2015quivers}
S.~Oppermann, \emph{Quivers for silting mutation}, arXiv preprint
  arXiv:1504.02617 (2015).

\bibitem{QPA}
The QPA-team, \emph{{QPA - Quivers, path algebras and representations, Version
  1.23}}, 2015.

\bibitem{MR1027750}
J.~Rickard, \emph{Derived categories and stable equivalence}, J. Pure Appl.
  Algebra \textbf{61} (1989), no.~3, 303--317. \MR{1027750}

\bibitem{MR3406174}
S.~Schroll, \emph{Trivial extensions of gentle algebras and {B}rauer graph
  algebras}, J. Algebra \textbf{444} (2015), 183--200. \MR{3406174}

\bibitem{MR717892}
A.~Skowro{{\'n}}ski and J.~Waschb{{\"u}}sch, \emph{Representation-finite
  biserial algebras}, J. Reine Angew. Math. \textbf{345} (1983), 172--181.
  \MR{717892}

\end{thebibliography}
\bibliographystyle{amsplain}
\def\cprime{$'$} \def\cprime{$'$} \def\cprime{$'$}
\providecommand{\bysame}{\leavevmode\hbox to3em{\hrulefill}\thinspace}
\providecommand{\MR}{\relax\ifhmode\unskip\space\fi MR }
\providecommand{\MRhref}[2]{%
  \href{http://www.ams.org/mathscinet-getitem?mr=#1}{#2}
}
\providecommand{\href}[2]{#2}

\end{document}